\newcommand{\be}{\begin{equation}}
\newcommand{\ee}{\end{equation}}
\newcommand{\beq}{\begin{eqnarray}}
\newcommand{\eeq}{\end{eqnarray}}
\newtheorem{thm}{Theorem}[section]
\newtheorem{lma}{Lemma}[section]
\newtheorem{prop}{Proposition}[section]
\newtheorem{cor}{Corollary}[section]
\newtheorem{defn}{Definition}[section]
\theoremstyle{remark}
\newtheorem{rem}{Remark}[section]
\numberwithin{equation}{section}
\newtheorem{claim}{Claim}[section]
\newtheorem{subclaim}{Subclaim}[section]
\newcommand{\abs}[1]{\left|#1\right|}
\newcommand{\Ric}{\textup{Ric}}
\newcommand{\KE}{\textup{KE}}
\newcommand{\pt}{\mathbf{P}}
\newcommand{\R}{\mathbb{R}}
\newcommand{\C}{\mathbb{C}}
\newcommand{\K}{\mathcal{K}}
\newcommand{\V}{\V}
\newcommand{\CP}{\mathbb{CP}}
\newcommand{\e}{\varepsilon}
\renewcommand{\V}{\mathcal{V}}
\def\heat{\left(\frac{\partial}{\partial t}-\Delta \right)}
\def\a{{\alpha}}
\def\b{{\beta}}
\newcommand{\ddb}{\sqrt{-1}\partial\bar\partial}
\newcommand{\D}[2]{\frac{\partial #1}{\partial #2}}
\newcommand{\ddbar}{\partial \bar{\partial}}
\def\be{\begin{equation}}
\def\ee{\end{equation}}
\def\bee{\begin{equation*}}
\def\eee{\end{equation*}}
\def\lf{\left}
\def\ri{\right}
\def\K{K\"ahler }
\def\KR{K\"ahler-Ricci }
\def\Ric{\text{\rm Ric}}
\def\p{\partial}
\def\ddb{\sqrt{-1}\partial\bar\partial}
\def\heat{\lf(\frac{\p}{\p t}-\Delta\ri)}
\def\sheat{(\partial_t-\Delta)}
\def\e{\varepsilon}
\def\a{{\alpha}}
\def\b{{\beta}}
\def\R{\mathbb{R}}
\def\C{\mathbb{C}}
\begin{document}
\title[Higher-Order Estimates of Long-Time Solutions to KRF]{Higher-Order Estimates of Long-Time Solutions to the K\"ahler-Ricci Flow}
\author{Frederick Tsz-Ho Fong$^1$}
\address[Frederick Tsz-Ho Fong]{Department of Mathematics, Hong Kong University of Science and Technology}
 \email{frederick.fong@ust.hk}
\thanks{$^1$Research partially supported the Hong Kong RGC General Research Funds \#16300018 and \#16304719. }
 
 \author{Man-Chun Lee$^2$}
\address[Man-Chun Lee]{Department of Mathematics, Northwestern University, 2033 Sheridan Road, Evanston, IL 60208}
\email{mclee@math.northwestern.edu}
\thanks{$^2$Research partially supported by NSF grant DMS-1709894.}


\renewcommand{\subjclassname}{
  \textup{2010} Mathematics Subject Classification}
\subjclass[2010]{Primary 32Q15; Secondary 53C44
}

\date{\today}

\begin{abstract}
In this article, we study the higher-order regularity of the K\"ahler-Ricci flow on compact K\"ahler manifolds with semi-ample canonical line bundle. We proved, using a parabolic analogue of Hein-Tosatti's work on collapsing Calabi-Yau metrics, that when the generic fibers of the Iitaka fibration are biholomorphic to each other, the flow converges in $C_{\textup{\textup{loc}}}^\infty$-topology away from singular fibers to a negative K\"ahler-Einstein metric on the base manifold. In particular, we proved that the Ricci curvature of the flow is uniformly bounded on any compact subsets away from singular fibers when the generic fibers are biholomorphic to each other.
\end{abstract}

\keywords{}

\maketitle

\markboth{}{Higher-order estimates of the \KR flow}

\section{introduction}
In this article, we study the normalized K\"ahler-Ricci flow:
\[
\D{g(t)}{t} = -\Ric(g(t))-g(t)
\]
on a compact K\"ahler manifold $X$ with semi-ample canonical line bundle $K_X$. Such a manifold admits a Iitaka fibration structure given by a holomorphic map $f : X \to \Sigma \subset \CP^N$ with possibly singular fibers and possibly singular base manifold $\Sigma$. Let $S \subset \Sigma$ be the union of the set of singular values of $f$ and the singular set of $\Sigma$. The regular (also known as generic) fibers $f^{-1}(z)$, where $z \in \Sigma \backslash S$, are Calabi-Yau manifolds. The dimension of $\Sigma$ is the Kodaira dimension of $X$. We focus on the case when $0 < \dim \Sigma < \dim X$, and we let $\dim_{\mathbb{C}} \Sigma = m$ and $\dim_{\mathbb{C}} X = m+n$, so that the Calabi-Yau fibers have complex dimension $n$.

The K\"ahler-Ricci flow under this setting (assuming $0 < \dim \Sigma < \dim X$) has been extensively studied by various authors \cite{ST07,ST12,ZZhang14,Gill,FZ15,ST16,TWY,TZ,TianZLZ18,
ZhangYSI,ZhangYSII,FongYSZ19, ZhangYS19,STZ19,Jian18,JianShi19,GTZ19}. In particular, the semi-ampleness of $K_X$ implies the nefness and hence the flow has a long-time solution by the results of \cite{Cao85,Tsu,TZ06}. When $X$ is projective, the abundance conjecture predicts that the nefness of $K_X$ is equivalent to its semi-ampleness. And hence, it is natural and tempting to consider the behaviour of the flow as $t\rightarrow \infty$ under the above setting. In \cite{ST07} and \cite{ST12}, Song-Tian proved that the flow will converge to a generalized K\"ahler-Einstein metric in the sense of measure on the base manifold $\Sigma$ as $t\rightarrow +\infty$. The generalized K\"ahler-Einstein metric $\omega_\Sigma$ satisfies $\Ric(\omega_\Sigma) = -\omega_\Sigma + \omega_{\text{WP}}$, where $\omega_{\textup{WP}}$ is the Weil-Petersson's term which reflects the variation of complex structures of the fibers, which vanishes when the fibers are biholomorphic to each other. They conjectured that the regularity of convergence can be improved to $C^\infty_{\textup{\textup{loc}}}(f^{-1}(\Sigma\backslash S))$-convergence, and also (global) Gromov-Hausdroff convergence. These conjectures are open in general, although many progress have been made by many authors toward them (in particular, \cite{Gill,FZ15,HT15,TWY,TianZLZ18,STZ19,GTZ19}).

Concerning the above-mentioned conjecture about the regularity, it was proved by Gill \cite{Gill} that when $X$ is a direct product $E \times \Sigma$ of a torus $E$ (or its finite quotient) and a compact \K manifold $\Sigma$ with ample canonical line bundle $K_\Sigma$, the normalized K\"ahler-Ricci flow converges in $C^\infty$-topology to the K\"ahler-Einstein metric on $\Sigma$. The first-named author and Z.Zhang proved in \cite{FZ15} the $C_{\textup{\textup{loc}}}^\infty(f^{-1}(\Sigma\backslash S))$-convergence of the flow when the regular fibers of $f : X \to \Sigma$ are complex tori (with possibly different complex structures) using a parabolic analogue of Gross-Tosatti-Y.G.Zhang's work \cite{GTZ13} on collapsing Calabi-Yau metrics. The rationality assumption of \cite{FZ15} was later removed by Hein-Tosatti in \cite{HT15}. For general Calabi-Yau fibrations $f : X \to \Sigma$, Tosatti-Weinkove-Yang proved in \cite{TWY} the $C^0_{\textup{\textup{loc}}}(f^{-1}(\Sigma\backslash S))$-convergence of the metric to the generalized K\"ahler-Einstein metric on the base manifold $\Sigma$.

The main goal of this article is to establish the $C^\infty_{\textup{\textup{loc}}}(f^{-1}(\Sigma\backslash S))$-convergence of the flow when the regular fibers are biholomorphic to each other.

\begin{thm}
\label{thm:Main}
Consider the K\"ahler-Ricci flow $g(t)$ under the above setting. Suppose there exists an open set $B$ contained compactly inside $\Sigma \backslash S$ (i.e. $B \subset\subset \Sigma \backslash S$) over which the fibers $\{f^{-1}(z)\}_{z \in B}$ are all biholomorphic to each other, so that $f^{-1}(B)$ can be trivialized as a product $B \times Y$, where $Y$ is a Calabi-Yau manifold. Denote $g_P(t) = g_{\C^m} + e^{-t}g_Y$ where $g_{\C^m}$ is the Euclidean metric on $B$, and $g_Y$ is a fixed Calabi-Yau metric on $Y$. Then, for any $\Omega\Subset B$ and $k \in \mathbb{N}$, there exists $C_{k,\Omega}$ such that
\[\sup_{(\Omega \times Y) \times [0,\infty)} \abs{\nabla^{k, g_P(t)}g(t)}_{g_P(t)} \leq C_{k,\Omega}.\]
In particular,if all fibers of $f$ over $\Sigma \backslash S$ are isomorphic, then the \KR flow $\omega(t)$ will converge to $f^*\omega_\infty$ in $C^\infty_{loc}(f^{-1}(\Sigma \backslash S))$ as $t\rightarrow +\infty$.
\end{thm}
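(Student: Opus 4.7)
The plan is to adapt Hein-Tosatti's collapsing scheme to the parabolic setting via an anisotropic rescaling that un-collapses the shrinking fibres. For $\Omega\Subset\Omega'\Subset B$, a point $z_0\in\Omega$, and $t_0\gg 1$, consider
\[
\Phi_{t_0}(z,w):=(z_0+e^{-t_0/2}z,\,w):\ B_1(0)\times Y \longrightarrow \Omega'\times Y,
\]
and the rescaled flow $\tilde g(s):=e^{t_0}\Phi_{t_0}^{\ast}g(t_0+e^{-t_0}s)$ for $s\in[-1,1]$. A direct calculation yields
\[
\tilde g_P(s)=g_{\C^m}+e^{-e^{-t_0}s}g_Y\xrightarrow[t_0\to\infty]{C^\infty} g_0:=g_{\C^m}+g_Y,\qquad \partial_s\tilde g=-\Ric(\tilde g)-e^{-t_0}\tilde g,
\]
so the rescaled reference converges smoothly to the fixed non-collapsing product $g_0$ on $B_1(0)\times Y$, and the rescaled flow is an $O(e^{-t_0})$-perturbation of the unnormalised K\"ahler-Ricci flow on a fixed spatial domain and time interval.

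The $C^0_{\mathrm{loc}}$-estimates of Song-Tian and Tosatti-Weinkove-Yang already supply the two-sided comparison $C^{-1}g_0\le\tilde g(s)\le Cg_0$ uniformly in $t_0$. The strategy is then to prove a parabolic Calabi-type $C^1$-estimate for $\tilde g$ against $g_0$ by running the maximum principle on a Phong-Sesum-Sturm/Sherman-Weinkove type auxiliary quantity such as $\log|\Psi|^2_{\tilde g}-A\tr_{g_0}\tilde g$, where $\Psi=\nabla^{\tilde g}-\nabla^{g_0}$: the fact that $g_0$ is flat on the base and Ricci-flat on the fibres kills the usual obstructing curvature terms, and the perturbation $-e^{-t_0}\tilde g$ contributes only harmlessly for $t_0\gg 1$. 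Once $|\Psi|_{\tilde g}$ is under control, the local K\"ahler potential of $\tilde g$ satisfies a uniformly parabolic complex Monge-Amp\`ere equation, so parabolic Evans-Krylov together with standard Schauder bootstrap produce uniform $C^{k,\alpha}$-bounds on $\tilde g$ for every $k$; equivalently one may invoke Shi-type local derivative estimates for the nearly K\"ahler-Ricci flow once curvature is bounded.

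Passing these rescaled bounds back through $\Phi_{t_0}$ and the conformal factor $e^{t_0}$ converts them into the claimed estimate $|\nabla^{k,g_P(t_0)}g(t_0)|_{g_P(t_0)}\le C_{k,\Omega}$ at $\Phi_{t_0}(z,w)\in\Omega\times Y$, with the known $C^0$-rate of convergence of $g(t)$ towards $f^{\ast}\omega_\infty$ supplying the decay needed to offset the unfavourable conformal scaling of higher-order norms in the pull-back. Varying $z_0$ over $\Omega$ and covering by finitely many such charts then yields the claim on all of $\Omega\times Y\times[0,\infty)$; the final $C^\infty_{\mathrm{loc}}$-convergence assertion then follows by combining these uniform higher-order bounds with the already established $C^0_{\mathrm{loc}}$-convergence.

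The main obstacle is the uniform Calabi-type $C^1$-estimate for the rescaled flow. Unlike the elliptic Hein-Tosatti situation where the model background can be taken fixed once and for all, here the reference $g_P(t)$ is genuinely time-dependent and the flow carries the lower-order $-g(t)$ term; ensuring that the errors these introduce can be absorbed uniformly as $t_0\to\infty$ is the technical core of the proof, and is precisely where the biholomorphic-fibre hypothesis enters decisively by supplying the fixed Ricci-flat product model $g_0$ to which the rescaled background converges in $C^\infty$.
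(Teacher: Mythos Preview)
Your rescaling $\Phi_{t_0}$ and the observation that $\tilde g(s)$ is an $O(e^{-t_0})$-perturbation of the unnormalised K\"ahler--Ricci flow uniformly equivalent to $g_0=g_{\C^m}+g_Y$ are correct, and Evans--Krylov (or Sherman--Weinkove) does yield uniform $C^{k}$-bounds $|\nabla^{k,g_0}\tilde g(0)|_{g_0}\le C_k$ on $B_{1/2}(0)\times Y$. The gap is in the pull-back step. A direct computation with $\tilde g_P(0)=e^{t_0}\Phi_{t_0}^*g_P(t_0)=g_0$ gives
\[
|\nabla^{k,g_P(t_0)}g(t_0)|_{g_P(t_0)}(\Phi_{t_0}(p))
= e^{kt_0/2}\,|\nabla^{k,g_0}\tilde g(0)|_{g_0}(p),
\]
so the local estimate on the rescaled chart only yields $|\nabla^{k,g_P(t_0)}g(t_0)|_{g_P(t_0)}\le C_k e^{kt_0/2}$, which blows up. What you actually need is $|\nabla^{k,g_0}\tilde g(0)|_{g_0}\le C_k e^{-kt_0/2}$, i.e.\ that the rescaled metric is exponentially close to a parallel tensor; this is not a consequence of local parabolic regularity. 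Your proposed fix via the $C^0$ convergence rate cannot work either: the rate available from Tosatti--Weinkove--Yang is not exponential of the required order, and even granting $\|\tilde g-\tilde g_\infty\|_{C^0}\le Ce^{-\epsilon t_0}$ for some fixed $\epsilon$, interpolation against a uniform $C^{k+2}$ bound produces at best a fixed small power of $e^{-t_0}$ in $C^k$, never $e^{-kt_0/2}$ for all $k$.

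The paper circumvents this by a contradiction blow-up argument in which the rescaling factor $K_i$ is dictated by the (assumed) blow-up of $|\nabla^{k,g_P(t_i)}g(t_i)|_{g_P(t_i)}$ itself, not by $e^{t_i/2}$. One then distinguishes three regimes according to $\delta_i=K_i e^{-t_i/2}\to\infty,\ \delta_0>0,\ 0$. In the first two, Song--Tian's scalar curvature bound forces the limit to be Ricci-flat and the Liouville theorems on $\C^{m+n}$ and on $\C^m\times Y$ give the contradiction. The genuinely hard case $\delta_i\to 0$ requires a new parabolic Schauder estimate on cylinders $\C^m\times Y$ for $\ddb$-exact forms (the parabolic analogue of Hein--Tosatti's elliptic estimate), together with the fibrewise and basewise $C^0$-convergence from Tosatti--Weinkove--Yang, to manufacture one extra order of regularity around the central fibre and reach a contradiction via the $\C^m$-Liouville theorem. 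Your single-scale rescaling is essentially the auxiliary map $\Upsilon_i$ that appears \emph{inside} this third case, but by itself it cannot close the argument.
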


The higher-order regularity result implies that the Ricci curvature stays bounded on $f^{-1}(B)$, as the reference metric $g_P(t)$ is Ricci-flat and $g(t)$ is uniformly equivalent to $g_P(t)$ (see \eqref{metric-equ}) on any compact subset of $f^{-1}(\Sigma \backslash S)$.
\begin{cor}
\label{cor:RicciBound}
If the Iitaka fibration is locally product on $f^{-1}(B)$, then $\Ric(t)$ stays bounded as $t\rightarrow +\infty$ on $f^{-1}(K)$ for any compact subset $K \subset\subset B$.
\end{cor}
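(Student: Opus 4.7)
\textbf{Proof proposal for Corollary \ref{cor:RicciBound}.} The plan is to deduce the Ricci bound directly from Theorem \ref{thm:Main} by exploiting the Ricci-flatness of the product reference metric $g_P(t) = g_{\C^m} + e^{-t} g_Y$. Since $g_{\C^m}$ is flat and $g_Y$ is Calabi-Yau, and the Ricci form of a K\"ahler product is the sum of the Ricci forms of its factors, we have $\Ric(\omega_P(t)) \equiv 0$ for all $t$. The standard identity for the difference of two Ricci forms on a K\"ahler manifold then gives
\[
\Ric(\omega(t)) \;=\; \Ric(\omega(t)) - \Ric(\omega_P(t)) \;=\; -\ddb\, h(t), \qquad h(t) := \log\frac{\omega(t)^{m+n}}{\omega_P(t)^{m+n}},
\]
so it suffices to bound $|\ddb h(t)|_{g(t)}$ uniformly on $f^{-1}(K)$ for each compact $K \subset\subset B$.

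At any fixed point, working in $g_P(t)$-adapted K\"ahler coordinates (i.e. such that $(g_P(t))_{i\bar j} = \delta_{ij}$ and the first derivatives of $g_P(t)$ vanish at the point), the Christoffel symbols of $g_P(t)$ vanish there, so $(\ddb h)_{i\bar j} = (\nabla^{g_P(t),2} h)_{i\bar j}$ at that point, and hence pointwise $|\Ric(g(t))|_{g_P(t)} \leq C\,|\nabla^{g_P(t),2}h(t)|_{g_P(t)}$ for a dimensional constant $C$. Now $h(t) = \log\det_{g_P(t)} g(t)$ is a smooth function of the matrix entries of $g(t)$ and $g(t)^{-1}$ in a $g_P(t)$-unitary frame, so modulo schematic contractions,
\[
\nabla^{g_P(t)} h = \tr_{g(t)}\bigl(\nabla^{g_P(t)} g(t)\bigr),
\]
\[
\nabla^{g_P(t),2} h = \tr_{g(t)}\bigl(\nabla^{g_P(t),2} g(t)\bigr) \;-\; g(t)^{-1} \ast \nabla^{g_P(t)}g(t) \ast g(t)^{-1} \ast \nabla^{g_P(t)}g(t).
\]
By the uniform equivalence \eqref{metric-equ} of $g(t)$ and $g_P(t)$ on $f^{-1}(K)\times[0,\infty)$, the eigenvalues of $g(t)$ relative to $g_P(t)$ lie in a compact subset of $(0,\infty)$, so $|g(t)^{-1}|_{g_P(t)}$ is bounded. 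Applying Theorem \ref{thm:Main} with $k=1$ and $k=2$ bounds $|\nabla^{g_P(t)}g(t)|_{g_P(t)}$ and $|\nabla^{g_P(t),2}g(t)|_{g_P(t)}$ uniformly on $f^{-1}(K)\times[0,\infty)$, and assembling these gives $|\nabla^{g_P(t),2}h(t)|_{g_P(t)} \leq C_K$, hence $|\Ric(g(t))|_{g_P(t)} \leq C_K'$; a final invocation of \eqref{metric-equ} converts this to $|\Ric(g(t))|_{g(t)} \leq C_K''$, which is the desired conclusion.

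The corollary is essentially a bookkeeping consequence of Theorem \ref{thm:Main}, so I do not anticipate any serious conceptual obstacle. The only mildly delicate point is the scaling behavior of $g_P(t)$ in the fiber direction under the factor $e^{-t}$; however, since $\partial_\alpha (g_P(t))_{\beta\bar\gamma} = e^{-t}\partial_\alpha (g_Y)_{\beta\bar\gamma}$ while $(g_P(t))^{\delta\bar\gamma} = e^{t}(g_Y)^{\delta\bar\gamma}$, the factors of $e^{\pm t}$ cancel in the Christoffel symbols of $g_P(t)$, which therefore coincide with the fixed, bounded Christoffel symbols of $g_Y$. This confirms that working in $g_P(t)$-adapted coordinates introduces only bounded $t$-independent errors, and the whole argument goes through uniformly in $t$.
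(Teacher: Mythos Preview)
Your proposal is correct and follows essentially the same route as the paper: both use that $g_P(t)$ is Ricci-flat to write $\Ric(g(t))=-\ddb\log\frac{\det g(t)}{\det g_P(t)}$, then bound $\nabla^{2,g_P(t)}$ of this log-determinant schematically as $g^{-1}*\nabla^{2,g_P}g + g^{-1}*g^{-1}*\nabla^{g_P}g*\nabla^{g_P}g$ and invoke the higher-order estimates together with the uniform equivalence \eqref{metric-equ}. Your closing remark that the Christoffel symbols of $g_P(t)$ are independent of $t$ is exactly the observation $\nabla^{g_P(t)}=\nabla^{g_P}$ that the paper uses implicitly throughout.
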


The uniform Ricci curvature bound on compact subsets of $f^{-1}(\Sigma \backslash S)$ is the common hypothesis in many recent works about the K\"ahler-Ricci flow on compact K\"ahler manifolds with semi-ample $K_X$, such as Y.S.Zhang \cite{ZhangYS19}, Tian-Z.L.Zhang \cite{TianZLZ18}, Song-Tian-Z.L.Zhang \cite{STZ19} and Gross-Tosatti-Y.G.Zhang \cite{GTZ19}. These works address the uniform diameter bound and the Gromov-Hausdorff convergence of the K\"ahler-Ricci flow under various assumptions such as Kodaira dimensions, crossing of singular sets, etc. Now that we have established such a uniform Ricci curvature bound in the case of biholomorphic generic fibers, we can further improve some of the earlier works. This partially resolved Song-Tian's conjecture on the uniform diameter bound along the normalized \KR flow when the Iitaka fibration is locally product.

\begin{cor}\label{cor:diameter}
Let $X$ be a \K manifold with semi-ample $K_X$ such that the Iitaka fibration is locally product on $f^{-1}(B)$. Suppose $g(t)$ is the solution of the normalized K\"ahler-Ricci flow on X with any initial \K metric $g_0$, then there exists $D>0$ such that for all $t\in [0,+\infty)$,
$$\mathrm{diam}\left(X,g(t)\right)\leq D$$
where $\mathrm{diam}\left(X,g(t)\right)$ is the (global) diameter of $X$ measured by $g(t)$.
\end{cor}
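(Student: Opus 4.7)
The plan is to deduce the uniform diameter bound by combining the Ricci curvature bound produced by Theorem~\ref{thm:Main} with a conditional diameter bound already established in the literature.

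In the first step, I would establish a uniform Ricci curvature bound on every compact subset of the regular part $f^{-1}(\Sigma\setminus S)$. Under the locally product assumption, Theorem~\ref{thm:Main} (equivalently Corollary~\ref{cor:RicciBound}) can be applied over each locally trivialised chart of the Iitaka fibration inside $\Sigma\setminus S$. Since the reference metric $g_P(t) = g_{\C^m} + e^{-t}g_Y$ is Ricci-flat and uniformly equivalent to $g(t)$ in the fibrewise-rescaled sense, the $C^\infty_{\textup{loc}}$ bound of $g(t)$ relative to $g_P(t)$ yields
\[
\sup_{(x,t)\in f^{-1}(K)\times[0,\infty)} |\Ric(g(t))|_{g(t)} < \infty
\]
for every compact $K \subset\subset \Sigma\setminus S$.

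In the second step, I would invoke the conditional diameter bound for the normalised K\"ahler-Ricci flow on compact K\"ahler manifolds with semi-ample $K_X$, due to Song-Tian-Z.L. Zhang~\cite{STZ19} and Tian-Z.L. Zhang~\cite{TianZLZ18}. Their result asserts that whenever the Ricci curvature along the flow is uniformly bounded on compact subsets of $f^{-1}(\Sigma\setminus S)$, the diameter $\diam(X,g(t))$ remains uniformly bounded in $t$; combined with Step~1 this produces the constant $D$ in the statement.

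The hard part is entirely contained in the conditional result invoked in the second step: controlling the metric's behaviour across the singular set $S$ requires delicate pluripotential-theoretic inputs such as $L^\infty$-bounds on K\"ahler potentials, volume and capacity comparisons, and a careful treatment of the pushforward of $c_1(K_X)$ onto $\Sigma$. The present corollary does not redo that analysis; its role is simply to supply the missing Ricci hypothesis via the higher-order estimates of Theorem~\ref{thm:Main}, thereby realising a concrete case of Song-Tian's diameter conjecture in the locally product setting.
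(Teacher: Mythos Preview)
Your proposal is correct and follows essentially the same approach as the paper: the corollary is not given a standalone proof but is obtained by feeding the Ricci curvature bound of Corollary~\ref{cor:RicciBound} into the conditional diameter bounds of Tian--Z.L.~Zhang \cite{TianZLZ18} and Song--Tian--Z.L.~Zhang \cite{STZ19}. The paper's surrounding discussion makes exactly this point, noting that the uniform Ricci bound on compact subsets of $f^{-1}(\Sigma\setminus S)$ is the common hypothesis in those works, so your two-step outline matches the intended argument.
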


When the generic fibers are complex tori, this was known by works of \cite{Gill, FZ15, HT15, TianZLZ18, STZ19}. The uniform diameter bound is essential for us to extract a convergent sub-sequence $(X,d_{g(t_i)})$ in Gromov-Hausdroff's topology. By combining Corollary \ref{cor:RicciBound} with results in \cite{STZ19,GTZ19}, one can identify the Gromov-Hausdorff limit in the cases described below:



\begin{cor}
\label{cor:GHConvergence}
Let $X$ be a K\"ahler manifold with semi-ample $K_X$ such that the Iitaka fibration is locally product. 
Suppose further that $\Sigma$ is smooth and the codimension $1$ irreducible components of the singular set $S$ have simple normal crossings (see \cite{GTZ19} for precise definitions),
then $(X, d_{g(t)})$ converges in Gromov-Hausdorff's topology to the metric completion of $(\Sigma \backslash S, d_{g_{\KE}})$ which is homeomorphic to $\Sigma$. In particular, it is always the case when $X$ has Kodaira dimension $1$ (c.f. \cite{ZhangYSII,TianZLZ18}).
\end{cor}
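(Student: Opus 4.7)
The plan is to reduce to the Gromov--Hausdorff convergence machinery of \cite{STZ19, GTZ19}, feeding in the local Ricci bound from Corollary \ref{cor:RicciBound} and the global diameter bound from Corollary \ref{cor:diameter}. That machinery requires precisely these two uniform inputs, together with a regularity hypothesis on the singular set $S$, and under those hypotheses produces a Gromov--Hausdorff limit and identifies it with the metric completion of $(\Sigma \backslash S, d_{g_\KE})$. The proof therefore decomposes into verifying the hypotheses, citing the convergence result, recognizing the limit, and finally specializing to $\dim\Sigma = 1$.

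Verifying the hypotheses: the diameter bound is Corollary \ref{cor:diameter}. For the local Ricci bound on compact subsets of $f^{-1}(\Sigma \backslash S)$, the locally product hypothesis lets me cover every $z \in \Sigma \backslash S$ by an open $B \subset\subset \Sigma \backslash S$ over which the fibers are mutually biholomorphic; Corollary \ref{cor:RicciBound} then gives a constant $C_B$ with $|\Ric(g(t))|_{g(t)} \leq C_B$ on $f^{-1}(B)$ for all $t \geq 0$. A finite subcover of a compact $K \subset\subset \Sigma \backslash S$ and a maximum over the $C_B$'s supply the required uniform Ricci bound on $f^{-1}(K)$.

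Applying the cited theorem: the main result of \cite{GTZ19}, combined with the $C^0_\textup{loc}$ collapsing input of \cite{TWY}, asserts that under smoothness of $\Sigma$, the SNC hypothesis on the codimension-$1$ components of $S$, and the two uniform bounds verified above, $(X, d_{g(t)})$ subconverges to a compact metric space isometric to the metric completion of $(\Sigma \backslash S, d_{g_\KE})$ and homeomorphic to $\Sigma$. Under the locally product assumption the Weil--Petersson term in the Song--Tian equation vanishes, so the generalized \KE limit $\omega_\infty$ is the honest negative \KE metric $g_\KE$ on $\Sigma \backslash S$; together with the $C^\infty_\textup{loc}$ convergence from Theorem \ref{thm:Main}, this identifies the induced distance on the smooth part of the limit as $d_{g_\KE}$ and forces every subsequential Gromov--Hausdorff limit to coincide, upgrading subconvergence to full convergence.

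The Kodaira dimension $1$ case is immediate: the base $\Sigma$ is a smooth projective curve and $S$ is a finite set of points, so smoothness of $\Sigma$ and the SNC assumption on codimension-$1$ components of $S$ are vacuously satisfied. The principal obstacle lies entirely inside the cited work \cite{GTZ19}---the delicate geometric analysis near the singular fibers that uses the SNC structure to prevent extraneous components from appearing in the Gromov--Hausdorff limit. Our new contribution, the local Ricci bound of Corollary \ref{cor:RicciBound}, is exactly what is needed to activate that analysis in the biholomorphic-fiber setting, extending it beyond the previously understood torus-fiber case.
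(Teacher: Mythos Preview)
Your proposal is correct and follows exactly the approach the paper indicates: the paper itself does not give a detailed proof of this corollary, stating only that it follows ``by combining Corollary \ref{cor:RicciBound} with results in \cite{STZ19,GTZ19}'', and your write-up is a faithful and reasonable elaboration of precisely that reduction. The additional details you supply---the covering argument for the local Ricci bound, the vanishing of the Weil--Petersson term under the locally product hypothesis, and the observation that in Kodaira dimension $1$ the base is a smooth curve with $S$ finite---are all consistent with the paper and with the cited literature.
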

The proof of Theorem \ref{thm:Main} adapts the idea in Hein-Tosatti's work \cite{HT18} on the collapsing of Calabi-Yau metrics, which is an elliptic complex Monge-Amp\`ere problem. In order to apply the idea of \cite{HT18} on the K\"ahler-Ricci flow, we need to establish a parabolic version of cylindrical Schauder estimates. This is done in Section \ref{sect:Schauder} of this paper. Furthermore, unlike in \cite{HT18} whose metrics involved are always Ricci-flat, we have to use a parabolic rescaling and dilation (instead of only rescaling) when adapting the blow-up analysis in \cite{HT18}. Thanks to the result by Song-Tian \cite{ST16} which shows the scalar curvature is always uniformly and globally bounded, we are able to argue that the limit metric of the blow-up sequence is also Ricci-flat. The Liouville's Theorem used in \cite{HT18} can then be applied in our setting. This is done in Section \ref{sect:LocalEstimates} of this paper.

Acknowledgement: The authors are grateful to Valentino Tosatti and Hans-Joachim Hein for suggesting the problem and generous sharing of the ideas. This work cannot be done without their support. Part of the works was done when the second author visited the Institute of Mathematical Science at The Chinese University of Hong Kong, which he would like to thank for the hospitality.

\section{Preliminary}
First let us start with the classical gradient estimates on manifolds with non-negative Ricci curvature. This will be used to study the ancient solutions to heat equation.
\begin{prop}\label{gradient-esti}
Let $(M^n,g)$ be a complete non-compact Riemannian manifold with $\Ric(g)\geq 0$. Suppose $u$ is a solution to heat equation, $\sheat u=0$ on $Q_{p,R}=B_{g}(p,R)\times (-R^2,0]$ for some $R>0$ and $p\in M$, then
\begin{equation}
\sup_{Q_{p,R/2}}|\nabla u|\leq \frac{C_n}{R} \sup_{Q_{p,R}}|u|.
\end{equation}
\end{prop}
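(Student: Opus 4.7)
The plan is to prove the estimate by a classical Bernstein-type maximum principle argument, in the spirit of the Li--Yau and Cheng--Yau gradient estimates. Direct computation from the heat equation together with Bochner's formula yields
\begin{equation*}
\sheat |\nabla u|^2 \;=\; -2|\nabla^2 u|^2 - 2\,\Ric(\nabla u,\nabla u) \;\leq\; -2|\nabla^2 u|^2 \;\leq\; 0,
\end{equation*}
where the first inequality uses $\Ric(g)\geq 0$, together with the auxiliary identity $\sheat(u^2)=-2|\nabla u|^2$. These two ``good-sign'' relations drive the estimate.

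To localize in space-time we build a cutoff $\psi(x,t)=\phi_1(r(x)/R)\phi_2(-t/R^2)$, where $r(x):=d_g(x,p)$ and $\phi_1,\phi_2$ are smooth, non-increasing, equal to $1$ on $[0,1/2]$ and to $0$ on $[1,\infty)$, and of the form $\chi^2$ for a smooth bump $\chi$ so that $|\phi_i'|^2\leq C\phi_i$. Completeness combined with $\Ric\geq 0$ yields the Laplacian comparison $\Delta r\leq (n-1)/r$, valid in the barrier sense across the cut locus via Calabi's trick. This produces a cutoff with $0\leq\psi\leq 1$, $\psi\equiv 1$ on $Q_{p,R/2}$, $\psi\equiv 0$ outside $Q_{p,R}$, and
\begin{equation*}
\frac{|\nabla\psi|^2}{\psi} \;+\; \sheat\psi \;\leq\; \frac{C_0(n)}{R^2}.
\end{equation*}

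With $M:=\sup_{Q_{p,R}}|u|$ I set $F:=\psi|\nabla u|^2 + Au^2$, with $A:=C_1/R^2$ and $C_1=C_1(n)$ to be chosen large. Since $\psi$ vanishes on the parabolic boundary of $Q_{p,R}$ and $Au^2\leq C_1M^2/R^2$ throughout, it suffices to bound $F$ assuming its maximum is attained at some interior point $(x_0,t_0)\in B_g(p,R)\times(-R^2,0]$. There $\nabla F=0$, $\Delta F\leq 0$, and $\partial_tF\geq 0$ (one-sidedly if $t_0=0$), so $\sheat F\geq 0$ at $(x_0,t_0)$. Expanding $\sheat F$ via the two identities above, eliminating the cross term $-2\nabla\psi\cdot\nabla|\nabla u|^2$ by using $\psi\nabla|\nabla u|^2 = -(\nabla\psi)|\nabla u|^2 - 2Au\nabla u$ from $\nabla F=0$, and absorbing the resulting $4Au\nabla\psi\cdot\nabla u/\psi$ into $A|\nabla u|^2$ by Cauchy--Schwarz, one arrives after multiplication by $\psi$ at a pointwise inequality of the schematic form
\begin{equation*}
\Bigl(\frac{3A}{2} - \frac{C_0'(n)}{R^2}\Bigr)\,\psi|\nabla u|^2 \;\leq\; \frac{C_0''(n)\,A\,M^2}{R^2}.
\end{equation*}
Choosing $C_1=C_1(n)$ large enough that $A\geq 4C_0'/R^2$ then forces $\psi|\nabla u|^2\leq C(n)M^2/R^2$ at $(x_0,t_0)$, and hence $F\leq C(n)M^2/R^2$ throughout $Q_{p,R}$. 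Restricting to $Q_{p,R/2}$, where $\psi\equiv 1$, yields the desired bound $|\nabla u|^2\leq C(n)M^2/R^2$.

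The main obstacle is the cutoff construction, which is the only place the hypothesis $\Ric\geq 0$ enters essentially, via Laplacian comparison and Calabi's trick at the cut locus. The remainder of the argument is a routine interplay between the Bochner term $-2|\nabla^2 u|^2$ (or the weaker sign $\sheat|\nabla u|^2\leq 0$) and the absorption term $-2A|\nabla u|^2$ coming from $\sheat(u^2)$, which together beat the gradient contributions from $\psi$.
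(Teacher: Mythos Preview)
Your argument is correct and follows the same Bernstein-type strategy as the paper: both use the test function $F=(\text{cutoff})\,|\nabla u|^2+Au^2$, the pair of identities $\sheat|\nabla u|^2\leq -2|\nabla^2 u|^2$ and $\sheat u^2=-2|\nabla u|^2$, and Laplacian comparison for the spatial cutoff. The only tactical differences are that the paper replaces your temporal cutoff $\phi_2$ by the linear weight $(t+1)$ (after scaling to $R=1$) and, rather than using $\nabla F=0$ at the maximum to eliminate the cross term, absorbs $-2\langle\nabla\Phi,\nabla|\nabla u|^2\rangle$ directly into the good Hessian term $-2(t+1)\Phi|\nabla^2 u|^2$ via Cauchy--Schwarz and Kato, obtaining $\sheat F<0$ globally and reading off the bound from the parabolic boundary; this is marginally cleaner but not materially different from your approach.
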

\begin{proof}
The proof is standard. For reader's convenience, we include the proof here. By considering $\tilde g=R^{-2}g$ and $\tilde u(x,t)=u(x,R^2t)$, we may assume $R=1$. By the heat equation of $u$, we have 
\begin{equation}
\left\{ 
\begin{array}{ll}
\sheat |\nabla u|^2&\leq -2|\nabla^2 u|^2;\\\\
\sheat |u|^2&= -2|\nabla u|^2.
\end{array}
\right.
\end{equation}
where we have used $\Ric\geq 0$ on the evolution equation of $|\nabla u|^2$. On the other hand, by Laplacian comparison there is $C_n>0$ such that for all $d_g(x,p)\geq \frac{1}{4}$, $\Delta d_g(x,p)\leq C_n$ in the barrier sense. By the trick of Calabi, we may assume $d_g(x,p)$ to be smooth when we apply maximum principle. Let $\Phi(x,t)=\phi(d_g(x,p))$ where $\phi$ is a smooth non-increasing function on $\mathbb{R}$ so that $\phi\equiv 1$ on $(-\infty,\frac{1}{2}]$, vanishes outside $(-\infty,1]$ and satisfies $ |\phi'|^2 \leq 100\phi$, $\phi''\geq -100$. Consider test function $F=(t+1)\Phi |\nabla u|^2+Au^2$ where $A$ is a constant to be fixed later. Then
\begin{equation}
\begin{split}
\heat F&\leq (t+1)|\nabla u|^2 \heat \Phi +\Phi |\nabla u|^2-2(t+1)\langle \nabla \Phi,\nabla |\nabla u|^2\rangle  \\
&\quad -2(t+1)\Phi |\nabla^2 u|^2-2A|\nabla u|^2\\
&\leq (C_n-2A) |\nabla u|^2\\
&<0
\end{split}
\end{equation}
provided that we choose $A>C_n$. Result follows from maximum principle.
\end{proof}

By letting $R\rightarrow +\infty$, the following Liouville theorem is immediate.
\begin{cor}\label{gradient-esti-LV}
Under the assumptions in Proposition \ref{gradient-esti}, if $u$ is a function on $M\times (-\infty,0]$ of $o(|t|^2+d_{g}(x,p))$, then $u$ is a constant function.
\end{cor}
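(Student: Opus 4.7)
The plan is to derive the Liouville property as a direct consequence of Proposition \ref{gradient-esti} by applying it on a nested family of parabolic cylinders $Q_{p,R} = B_g(p,R) \times (-R^2, 0]$ and letting $R \to \infty$. Since the ambient manifold has nonnegative Ricci, the gradient estimate is available on every such cylinder, and the only input that remains is the sublinear control of $u$ at infinity.

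Concretely, I would fix an arbitrary point $(x_0, t_0) \in M \times (-\infty, 0]$ and choose $R$ large enough that $(x_0, t_0) \in Q_{p, R/2}$. Proposition \ref{gradient-esti} then yields
$$
|\nabla u(x_0, t_0)| \,\leq\, \sup_{Q_{p, R/2}} |\nabla u| \,\leq\, \frac{C_n}{R}\sup_{Q_{p, R}} |u|.
$$
I would then invoke the growth hypothesis: since $(x,t) \in Q_{p,R}$ remains at parabolic distance $O(R)$ from $(p,0)$, the assumption $u = o(|t|^2 + d_g(x,p))$ gives $\sup_{Q_{p,R}} |u|/R \to 0$ as $R \to \infty$. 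Sending $R \to \infty$ therefore forces $|\nabla u(x_0, t_0)| = 0$, and since $(x_0, t_0)$ was arbitrary, $\nabla u \equiv 0$ on $M \times (-\infty, 0]$.

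Once $u$ is known to be spatially constant at every time, the heat equation reduces to $u'(t) = \Delta u = 0$, so $u$ is also constant in $t$, and hence a constant function on all of $M \times (-\infty, 0]$. The argument is essentially mechanical given Proposition \ref{gradient-esti}, so there is no substantive obstacle; the only point requiring care is to confirm that the stated growth matches the parabolic scaling of the cylinders $Q_{p,R}$ so that $R^{-1}\sup_{Q_{p,R}}|u|$ really does tend to zero.
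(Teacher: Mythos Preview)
Your approach is exactly the paper's: the authors simply write ``By letting $R\to+\infty$, the following Liouville theorem is immediate,'' and you have correctly fleshed this out. Note, however, that the growth hypothesis as printed contains an evident typo---$|t|^2$ should read $|t|^{1/2}$ (the parabolic scale), as is clear from how the corollary is later applied to functions that are $O\big((|x|+|t|^{1/2})^\alpha\big)$; with that correction your check that $R^{-1}\sup_{Q_{p,R}}|u|\to 0$ goes through, whereas with the literal $|t|^2$ it would not.
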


Next, we need the local estimates of the \KR flow which is an consequence of Evans-Krylov theory \cite{Ev,Kv}, see also \cite{ShW12} for a proof using maximum principle. By parametrizing the time, we have the following local estimates of \KR flow. This will be used extensively in the rest of the paper. 

\begin{thm}\label{KRF-local-estimates}
Let $B_1(0)$ be a Euclidean unit ball and $g(t)$ is a solution to $\partial_t \omega=-\Ric(\omega)-k\omega$ on $B_1(0)\times [0,T]$ so that 
$$A^{-1} g_{\mathbb{C}^n}\leq g(t)\leq A g_{\mathbb{C}^n}$$
for some $A>1,|k|\leq k_0$. Then for all $m\in \mathbb{N}$, there exist $C(n,m,T,A,k_0)$'s such that for all $t\in [\frac{1}{2}T,T]$, 
$$\sup_{B_{1/2}(0)}|\nabla^{m,g_{\mathbb{C}^n}}g(t)|\leq C(n,m,T,A,k_0).$$
\end{thm}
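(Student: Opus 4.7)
The plan is to convert the flow into a parabolic complex Monge-Amp\`ere equation for a local K\"ahler potential, apply Evans-Krylov's $C^{2,\alpha}$ theorem for concave fully nonlinear parabolic equations, and then bootstrap by standard linear parabolic Schauder estimates. The lower-order term $-k\omega$ with $|k|\leq k_0$ is harmless and I will simply carry it along without further comment.

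First I would shrink slightly to $B_{3/4}(0)$ and, using the $\partial\bar\partial$-lemma on the ball, choose a smooth family of potentials $\varphi(t)$ on $B_{3/4}(0)\times[0,T]$ with $\omega(t)=\ddb\varphi(t)$ and
\[
\frac{\p\varphi}{\p t}=\log\det(\varphi_{i\bar j})-k\varphi+h(x),
\]
where $h$ is a fixed smooth function that absorbs the normalization. The metric equivalence assumption becomes $A^{-1}I\leq(\varphi_{i\bar j})\leq A\,I$, which (a) makes the nonlinear operator $F(D^2\varphi):=\log\det(\varphi_{i\bar j})$ uniformly elliptic along the solution, and (b) together with the equation gives uniform $L^\infty$ bounds on $\varphi$ and $\dot\varphi$ (after normalizing $\varphi$ by subtracting its value at $(0,0)$, which does not affect its second derivatives). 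This yields a uniform interior $C^{1,1}$ bound on $\varphi$.

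Next, since $F$ is concave on the cone of positive-definite Hermitian matrices, the parabolic Evans-Krylov theorem applied on a slightly smaller subdomain produces some $\alpha\in(0,1)$ and a uniform $C^{2,\alpha}$ bound (in the parabolic H\"older sense) for $\varphi$ on $B_{5/8}(0)\times[T/3,T]$ with constants depending only on $n,A,k_0,T$. With $\varphi^{i\bar j}\in C^\alpha$, differentiating the equation once in a spatial direction gives a linear parabolic equation for $\partial_\ell\varphi$ with $C^\alpha$ coefficients, so linear parabolic Schauder upgrades $\varphi$ to $C^{3,\alpha}$ on a slightly smaller domain. Iterating $m-2$ more times produces a $C^{m,\alpha}$ bound for $\varphi$, hence a $C^m$ bound for $g_{i\bar j}=\varphi_{i\bar j}$, on $B_{1/2}(0)\times[T/2,T]$, with constants of the claimed form.

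The only non-routine step is the initial $C^{2,\alpha}$ estimate: everything downstream is textbook Schauder bootstrapping on a linear equation with good coefficients. The concavity of $\log\det$ on positive Hermitian matrices is the structural feature that makes Evans-Krylov applicable; an alternative is the maximum-principle route of Sherman-Weinkove \cite{ShW12}, who derive a Calabi-type estimate for $|\nabla g|^2_g$ directly from the metric equivalence and then invoke Shi-type interior estimates for higher derivatives. Either route is by now standard for the K\"ahler-Ricci flow and produces the stated constants.
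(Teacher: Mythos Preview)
Your proposal is correct and follows exactly the approach the paper indicates: the paper does not give its own proof of this theorem but simply cites Evans--Krylov theory \cite{Ev,Kv} (and the maximum-principle alternative of Sherman--Weinkove \cite{ShW12}) as the source of these standard local estimates. Your reduction to a parabolic complex Monge--Amp\`ere equation for a local potential, application of parabolic Evans--Krylov via the concavity of $\log\det$, and subsequent Schauder bootstrapping is precisely the route being invoked.
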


We also need the following Liouville theorems for the Ricci flat \K metrics on $\mathbb{C}^n$ and $\mathbb{C}^n\times Y$ where $Y$ is a compact Calabi-Yau \K manifold.
\begin{thm}[\cite{RbSch84}]\label{Liouville-flat}
Let $\omega$ be a Ricci flat \K metric on $\mathbb{C}^n$ such that 
$$A^{-1}\omega_{\mathbb{C}^n}\leq \omega\leq A\omega_{\mathbb{C}^n}$$
for some $A>1$ where $\omega_{\mathbb{C}^n}$ is the standard Euclidean metric on $\mathbb{C}^n$, then $\omega$ is constant.
\end{thm}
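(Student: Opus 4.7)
The plan is to recast the Ricci-flat condition as a global complex Monge-Amp\`ere equation on $\C^n$ and then exploit a parabolic-style scaling together with interior $C^{2,\alpha}$ estimates to force the second complex derivatives of a potential to be constant. First, because $\C^n$ is Stein, $H^2(\C^n,\R) = 0$, and we may write the closed positive $(1,1)$-form globally as $\omega = \ddb \varphi$ for some smooth strictly plurisubharmonic $\varphi$. The two-sided bound on $\omega$ translates to $A^{-1}\delta_{\ijb} \leq \varphi_{\ijb} \leq A\delta_{\ijb}$. The Ricci-flat equation reads $-\ddb\log\det(\varphi_{\ijb}) = 0$, so $\log\det(\varphi_{\ijb})$ is pluriharmonic on $\C^n$ and bounded in absolute value by $n\log A$. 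The classical Liouville theorem for pluriharmonic functions then gives $\det(\varphi_{\ijb}) \equiv c$ for a positive constant $c$, i.e.\ $\varphi$ solves a complex Monge-Amp\`ere equation with constant right-hand side on all of $\C^n$.

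Next, I would run a blow-down argument. For each $R > 1$, set
\[
\varphi_R(z) := R^{-2}\,\varphi(Rz), \qquad z \in \C^n.
\]
A direct computation gives $(\varphi_R)_{\ijb}(z) = \varphi_{\ijb}(Rz)$, so the rescaled potentials satisfy the very same bounds $A^{-1}\delta_{\ijb} \leq (\varphi_R)_{\ijb} \leq A\delta_{\ijb}$ and the same equation $\det((\varphi_R)_{\ijb}) = c$ on $\C^n$. The equation $F(\varphi_{\ijb}) := \log\det(\varphi_{\ijb}) = \log c$ is a uniformly elliptic equation on the set of Hermitian matrices with eigenvalues in $[A^{-1},A]$, and $F$ is concave on positive-definite Hermitian matrices. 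Therefore, by the complex Evans-Krylov theorem, there exist $\a = \a(n,A) \in (0,1)$ and $C = C(n,A)$ such that
\[
[(\varphi_R)_{\ijb}]_{C^{\a}(B_{1/2}(0))} \leq C
\]
uniformly in $R$.

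Finally, I would unravel the scaling. The inequality above translates back to
\[
|\varphi_{\ijb}(x) - \varphi_{\ijb}(y)| \leq C\,R^{-\a}\,|x - y|^{\a}
\]
for all $x,y \in B_{R/2}(0)$. Fixing $x,y$ and sending $R \to \infty$, we conclude $\varphi_{\ijb}(x) = \varphi_{\ijb}(y)$, so $\omega = \ddb \varphi$ has constant coefficients and is therefore a flat constant K\"ahler metric on $\C^n$. The main technical ingredient, and the step where all of the work is concentrated, is the interior $C^{2,\a}$ estimate for complex Monge-Amp\`ere via Evans-Krylov; the rest of the argument is the soft rescaling framework. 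An alternative route would be to invoke the Cheeger-Colding theory of Ricci-limit spaces to identify any blow-down as Euclidean, but the Evans-Krylov-based scaling argument is the most direct and self-contained and matches the Liouville-type results used subsequently in the blow-up analysis of Section~\ref{sect:LocalEstimates}.
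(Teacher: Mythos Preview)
Your proposal is correct. Note, however, that the paper does not supply its own proof of this statement: Theorem~\ref{Liouville-flat} is stated with a citation to \cite{RbSch84} and is used as a black box throughout Section~\ref{sect:LocalEstimates}. Your argument---reducing Ricci-flatness to a constant right-hand side complex Monge--Amp\`ere equation via the Liouville theorem for bounded pluriharmonic functions, then applying the Evans--Krylov interior $C^{2,\alpha}$ estimate together with a blow-down rescaling---is precisely the standard proof of this classical result, essentially the one given in the cited reference. There is nothing to compare against in the paper itself.
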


The Liouville theorem on product background was proved by Hein in \cite{Hein19}, see also \cite{LLZ17} for an alternative proof using Li's mean value Theorem \cite{Li86}. Here we state a slightly simplified form which is cleaner and sufficient for our purpose.
\begin{thm}\label{Liouville-product}
Let $\omega$ be a Ricci flat \K metric on $\mathbb{C}^n\times Y$ where $Y$ is compact \K manifold with $\Ric_Y=0$. Suppose there exists $A>1$ such that 
$$A^{-1}(\omega_{\mathbb{C}^n}+\omega_Y)\leq \omega\leq A(\omega_{\mathbb{C}^n}+\omega_Y)$$
on $\mathbb{C}^n\times Y$ and $\omega$ is $d$-cohomologous to $\omega_{\mathbb{C}^n}+\omega_Y$, then $\omega$ is parallel with respect to $\omega_P=\omega_{\mathbb{C}^n}+\omega_Y$.
\end{thm}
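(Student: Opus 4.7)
The plan is to adapt Hein's approach from \cite{Hein19}, which combines the complex Monge-Amp\`ere equation arising from Ricci-flatness with Liouville-type rigidity on the complete Ricci-flat product $(\mathbb{C}^n\times Y,\omega_P)$. First I would use the $d$-cohomology hypothesis, together with a $\ddbar$-lemma tailored to $\mathbb{C}^n\times Y$ (obtained by combining the standard $\ddbar$-lemma on the compact factor $Y$ with the Poincar\'e lemma on $\mathbb{C}^n$), to write $\omega=\omega_P+\ddb\varphi$ for a globally smooth real potential $\varphi$. Ricci-flatness of both $\omega$ and $\omega_P$ then implies that $\log(\omega^N/\omega_P^N)$ is pluriharmonic with $N=n+\dim_\mathbb{C} Y$; the uniform equivalence makes it bounded, and a Liouville theorem for bounded pluriharmonic functions on the $\Ric=0$ manifold $(\mathbb{C}^n\times Y,\omega_P)$ forces it to be constant. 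Absorbing this constant into $\varphi$ gives the normalized Monge-Amp\`ere equation $(\omega_P+\ddb\varphi)^N=\omega_P^N$.

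Second, I would iterate Yau's $C^3$ and the Evans-Krylov/Schauder interior estimates for the Monge-Amp\`ere equation on $\omega_P$-balls of unit size, which have uniformly bounded geometry because $\omega_{\mathbb{C}^n}$ is flat and $(Y,\omega_Y)$ is compact Calabi-Yau. This promotes the $C^{1,1}$ bound $A^{-1}\omega_P\leq\omega\leq A\omega_P$ to global uniform bounds on $|\nabla^{\omega_P,k}(\omega-\omega_P)|_{\omega_P}$ for every $k\ge 0$, reducing the statement $\nabla^{\omega_P}(\omega-\omega_P)\equiv 0$ to a rigidity question for a bounded $3$-tensor on the Ricci-flat product. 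To prove vanishing, I would derive a Bochner/Chern-Lu type differential inequality for a scalar quantity such as $Q=|\nabla^{\omega_P}(\omega-\omega_P)|^2_{\omega_P}$, or for $u=\tr_{\omega_P}\omega-N$ (non-negative by AM-GM combined with the normalized Monge-Amp\`ere equation), of the form $\Delta_\omega(\cdot)\ge-C(\cdot)$ with $C$ depending only on the bisectional curvature of $\omega_P$ and the ellipticity constant $A$. Using the uniform equivalence and transferring to $\Delta_{\omega_P}$ on a manifold with $\Ric=0$, one then invokes the Cheng-Yau Liouville principle (the elliptic counterpart of Proposition~\ref{gradient-esti}) for bounded subharmonic-like functions to conclude that the quantity in question is constant, and iterating the argument on higher covariant derivatives or exploiting the boundary behavior forces the constant to be zero.

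\textbf{Main obstacle.} The principal difficulty is that the Bochner/Chern-Lu estimate on $(\mathbb{C}^n\times Y,\omega_P)$ carries a genuine lower-order error term: the bisectional curvature of $\omega_P$ along the compact factor $Y$ is not non-positive, so bounded subharmonicity of $Q$ is not immediate. To close this gap I expect to need a blow-up argument: if parallelism failed at some $p\in\mathbb{C}^n\times Y$, one could rescale the metric around $p$ at scales $r_k\to 0$ and, using the higher-order estimates above together with Theorem~\ref{KRF-local-estimates}, extract a $C^\infty_{\textup{loc}}$-limit which is a Ricci-flat K\"ahler metric on $\mathbb{C}^{n+\dim_\mathbb{C} Y}$ uniformly equivalent to the Euclidean metric (since both the flat $\mathbb{C}^n$-factor and small patches of $Y$ look Euclidean at small scale). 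Theorem~\ref{Liouville-flat} then forces this limit to be parallel, while the scale-normalization at $p$ would keep the rescaled non-parallelism tensor of unit norm at the origin, yielding the desired contradiction.
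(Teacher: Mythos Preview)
The paper does not prove Theorem~\ref{Liouville-product}; it quotes the result from Hein \cite{Hein19} (with the alternative approach of \cite{LLZ17} via Li's mean value theorem) and only uses it as a black box. So there is no in-paper argument to compare against, and the relevant question is whether your sketch actually reproduces Hein's theorem.

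Your first two steps are fine and indeed match Hein's setup: writing $\omega=\omega_P+\ddb\varphi$ via the $\ddbar$-lemma on $\mathbb{C}^n\times Y$, and reducing Ricci-flatness to the normalized Monge--Amp\`ere equation $(\omega_P+\ddb\varphi)^N=\omega_P^N$ using the Liouville theorem for bounded pluriharmonic functions. The uniform interior higher-order estimates are also standard and correct. The gap is in the rigidity step. You correctly diagnose that the Chern--Lu route fails because the bisectional curvature of $\omega_P$ along $Y$ has no sign, but your proposed fix---a small-scale blow-up at a point where $\nabla^{\omega_P}\omega\neq 0$---does not yield a contradiction. Under a rescaling $g\mapsto r_k^{-2}g$ with $r_k\to 0$, the first-order tensor transforms as $|\nabla^{g_{P,k}}g_k|_{g_{P,k}}=r_k\,|\nabla^{g_P}g|_{g_P}\to 0$, so the limit is automatically parallel and you cannot keep the non-parallelism of unit size at the basepoint. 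More generally, any blow-up normalization based on a quantity that is already globally bounded (which $|\nabla^{\omega_P}\omega|$ is, by your own Step~2) will not produce a nontrivial limit.

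What actually works, and what you are missing, is the fiberwise averaging that Hein introduces and that this paper itself reuses in the proof of Proposition~\ref{sch-prep} (Case~2, the claim showing $w\equiv 0$). One splits $\varphi=\underline{\varphi}+(\varphi-\underline{\varphi})$ with $\underline{\varphi}(z)=\fint_Y\varphi(z,\cdot)\,\omega_Y^n$. The fiberwise-mean-zero piece $\varphi-\underline{\varphi}$ is controlled via the Poincar\'e inequality on the compact factor $Y$ combined with a weighted energy argument on $\mathbb{C}^n$; this is precisely where compactness of $Y$ enters and replaces the missing curvature sign. The base piece $\underline{\varphi}$ then satisfies (in the limit or directly) a Monge--Amp\`ere equation on $\mathbb{C}^n$, to which Theorem~\ref{Liouville-flat} applies. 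Without this decomposition, the argument does not close.
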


\begin{rem}
In term of \KR flow, Theorem \ref{Liouville-flat} and \ref{Liouville-product} can be interpreted as a gap theorem of static solution. In fact, it is not difficult to generalize Theorem \ref{Liouville-flat} to the case when $\omega(t)$ is only an ancient solution to the \KR flow. It will be interesting to know if the same phenomenon is still true in the product case. More precisely, \textit{if $\omega(t)=\omega_P+\ddb \varphi(t)$ is an ancient solution to the \KR flow uniformly equivalent to $\omega_P$ for all $t<0$, then is it true that $\omega_P(t)\equiv \omega_P$?}
\end{rem}

\section{Schauder estimates on cylinders}
\label{sect:Schauder}
In this section, we will derive Schauder estimates for heat equations on cylinders. We start with the definition of H\"older semi-norm of a time depending tensor $\sigma(t)$.

\begin{defn}
Let $(M,g)$ be a complete Riemannian manifold and $E\rightarrow M$ be a vector bundle on $M$ with fiber metric $h$ and the $h$-preserving connection $\nabla$. If $x,y\in M$ and if there is a unique minimal $g$-geodesic $\gamma$ joining from $x$ to $y$, then we let $\pt^g_{x,y}$ be the $\nabla$-parallel translation on $E$ along $-\gamma$. Let $Q^g_{p,R}=B_g(p,R)\times (-R^2,0]$ be the parabolic cylinder where $B_g(p,R)$ is the $g$-geodesic ball of radius $R$ centred at $p\in M$. Then for all sections $\sigma\in C_{\textup{loc}}^{\a,\a/2}(Q_g(p,2R),E)$, we define the parabolic H\"older semi-norm to be
\begin{align*}
[\sigma]_{\a,\a/2,Q^g_{p,R},g}=\sup\left\{ \frac{|\sigma(x,t)-\pt^g_{x,y}(\sigma(y,s))|_h}{(d_g(x,y)+|s-t|^{1/2})^\a} \right\}
\end{align*}
where the sup is taken over all $(x,t),(y,s)\in Q^g_{p,R}$ such that $(x,t)\neq (y,s)$ and $P_{x,y}^g$ is defined.
\end{defn}

For notational convenience, we say that $\sigma\in C^{k+\a,1+\a/2}(U,g)$ if $\nabla^m \partial_t^l \sigma$ exist and are continuous for all $m+2l\leq k$, $l\leq 1$ on a open subset $U$ in space-time and $[\nabla^{k-2}\partial_t \sigma]_{\a,\a/2,U,g}+[\nabla^k \sigma]_{\a,\a/2,U,g}<+\infty$. We also define $||\sigma||_{\infty,Q^g_{p,R}}=\sup_{Q^g_{p,R}}|\sigma|_h$. In the following, we will drop the index $g$ of metric when the content is clear. The goal of this section is to prove the following parabolic Schauder estimates on cylinders. Throughout this section, we will assume 
\begin{center}
$\star$ $(Y,g_Y)$ is a closed \K manifold with $\Ric(g_Y)=0$.
\end{center}

\begin{thm}\label{Sch-esti}
Let $m\in \mathbb{N}$ and equip $\mathbb{C}^m\times Y$ with the product Riemannian metric $g_P=g_{\mathbb{C}^m}+ g_Y$. Then for any $k\in \mathbb{N}_{\geq 2},\,\a\in (0,1)$, there exists a constant $C(\a,m,k,g_Y)$ such that for all $x\in \mathbb{C}^m\times Y$ and $0<\rho<R$, 
\begin{align*}
[\nabla^{k,g_P} \eta]_{\a,\a/2,Q_{x,\rho}}
&\leq C\Big([\nabla^{k-2,g_P} \Box \eta]_{\a,\a/2,Q_{x,R}} +(R-\rho)^{-k-\a}||\eta||_{\infty,Q_{x,R}}\\
&\quad +(R-\rho)^{-k-\a+2}||\Box \eta||_{\infty,Q_R}\Big)
\end{align*}
for all $\ddb$-exact $2$ forms $\eta\in  C^{k+\a,1+\a/2}(Q_{x,2R})$, where $\Box=\partial_t-\Delta$ and $\Delta$ denotes the Hodge Laplacian with respect to $g_P$.
\end{thm}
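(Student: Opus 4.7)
\emph{Overall strategy.} I would prove Theorem \ref{Sch-esti} in two stages, adapting Hein-Tosatti's elliptic argument \cite{HT18} to the parabolic setting. The first stage establishes the fixed-scale version (take $\rho=1$, $R=2$, no $(R-\rho)$-weights), and the second stage upgrades to the statement as written by covering $Q_{x,\rho}$ with sub-cylinders of size $\sim(R-\rho)$, rescaling in the $\mathbb{C}^m$-direction (which is scale-invariant), and the standard interpolation $\|\nabla^j\eta\|_\infty\leq\varepsilon[\nabla^k\eta]_{\alpha,\alpha/2}+C_\varepsilon\|\eta\|_\infty$ to absorb intermediate derivatives $0<j<k$. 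The reason the argument works on the non-compact cylinder $\mathbb{C}^m\times Y$ is twofold: the $\mathbb{C}^m$-factor is homogeneous under translations (which are $g_P$-isometries) and $Y$ is compact, so blow-up limits live naturally on the whole $\mathbb{C}^m\times Y$.

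\emph{Blow-up and limit.} The fixed-scale estimate I prove by contradiction. A failure gives $\ddb$-exact forms $\eta_j$ on $Q_{x_j,2}$ with $[\nabla^k\eta_j]_{\alpha,\alpha/2,Q_{x_j,1}}=1$ while the quantities on the right-hand side tend to $0$. Translating in $\mathbb{C}^m$ and passing to a subsequence, I may take $x_j=(0,y_j)\to(0,y_\infty)$ for some $y_\infty\in Y$. The H\"older normalization bounds the oscillation of $\nabla^k\eta_j$ on $Q_{x_j,1}$, so after subtracting a $\ddb$-exact polynomial of degree $\leq k-1$ in the $\mathbb{C}^m$-variables, chosen to kill $\nabla^j\eta_j(0,y_\infty,0)$ for $j=0,\dots,k-1$, each $\eta_j$ becomes uniformly bounded in $C^{k,\alpha}(Q_{x_j,1})$. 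Combined with interior parabolic Schauder on Euclidean unit balls in the product structure (Theorem \ref{KRF-local-estimates}) and a diagonal Arzel\`a-Ascoli exhaustion, I extract a $C^{k,\alpha/2}_{\mathrm{loc}}$-subsequential limit $\eta_\infty=\ddb u_\infty$ on $\mathbb{C}^m\times Y\times(-\infty,0]$ that solves $\Box\eta_\infty=0$ and satisfies $[\nabla^k\eta_\infty]_{\alpha,\alpha/2,Q_{(0,y_\infty),1}}=1$.

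\emph{Liouville step and main obstacle.} Since $g_P$ is Ricci-flat K\"ahler, the identity $\Delta\ddb u=\ddb\Delta u$ reduces the limit equation to the scalar heat equation for $u_\infty$ (up to a pluriharmonic term already absorbed in the subtracted polynomial). Commuting $\Box$ with $\nabla^k$ produces only curvature terms bounded by $|\Rm(g_Y)|$, which are uniformly bounded on the compact fiber $Y$; Proposition \ref{gradient-esti} and Corollary \ref{gradient-esti-LV}, applied to (components of) $\nabla^k\eta_\infty$ viewed as bounded ancient solutions to a heat-type equation on the non-negatively Ricci-curved $(\mathbb{C}^m\times Y,g_P)$, force $\nabla^k\eta_\infty$ to be parallel, hence $[\nabla^k\eta_\infty]_{\alpha,\alpha/2}=0$, contradicting the normalization. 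The hardest part of the argument will be the polynomial-subtraction bookkeeping: the subtracted polynomial must (i) be $\ddb$-exact so that $\eta_\infty$ remains in the assumed class; (ii) kill exactly enough moments so that the limit exists on all of $\mathbb{C}^m\times Y\times(-\infty,0]$ with polynomial rather than exponential growth; and (iii) be compatible with $\Box$, in the sense that $\Box$ of the subtracted polynomial is a polynomial of degree $\leq k-2$ absorbed by the assumed smallness of $[\nabla^{k-2}\Box\eta_j]_{\alpha,\alpha/2}$. Verifying these compatibilities, and carrying the Liouville conclusion through the non-trivial curvature of $g_Y$, is where I expect most of the technical work to sit.
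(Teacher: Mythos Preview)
Your two-stage strategy has a structural gap: the product metric $g_P = g_{\mathbb{C}^m} + g_Y$ is \emph{not} scale-invariant, so you cannot pass from a fixed-scale estimate to the general $(R-\rho)$-weighted one by covering and rescaling. Rescaling the $\mathbb{C}^m$-factor by $\lambda$ turns $g_P$ into $g_{\mathbb{C}^m}+\lambda^{-2}g_Y$, and since the constant in the theorem depends on $g_Y$ you lose uniformity. The paper confronts this head-on: its core step (Proposition~\ref{sch-prep}) rescales at the scale $\lambda_i^{-1}=d(p_i,q_i)+|t_i-s_i|^{1/2}$ where the H\"older seminorm is realized and then splits into \emph{three} cases according to whether $\lambda_i\to+\infty$ (the fiber blows up and the limit lives on $\mathbb{C}^{m+n}$), $\lambda_i\to\lambda_\infty\in(0,\infty)$ (limit on $\mathbb{C}^m\times Y$), or $\lambda_i\to 0$ (the fiber collapses and the limit is on $\mathbb{C}^m$). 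Each case needs its own Liouville argument. Relatedly, your Stage~1 blow-up at fixed scale, with no rescaling, can only produce a limit on a finite cylinder $Q_{x_\infty,2}$, not on all of $\mathbb{C}^m\times Y\times(-\infty,0]$ as you assert; the Liouville step cannot even be set up. The passage from Proposition~\ref{sch-prep} to Theorem~\ref{Sch-esti} in the paper is an iteration via \cite[Lemma~3.4]{HT18}, not a covering argument.

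Your Liouville step in the non-collapsed regime is also incorrect as stated. You propose to apply Corollary~\ref{gradient-esti-LV} to components of $\nabla^k\eta_\infty$, acknowledging that the commutator $[\Box,\nabla^k]$ contributes curvature terms from $\Rm(g_Y)$. But those terms make $\nabla^k\eta_\infty$ a solution of a heat equation \emph{with a bounded zeroth-order potential}, and Proposition~\ref{gradient-esti}/Corollary~\ref{gradient-esti-LV} are stated and proved only for the free heat equation; a bounded potential does not scale away and the Liouville conclusion fails in general. The paper's Case~2 argument is genuinely different: writing the limit as $\partial\bar\partial h$, one shows that $h$ coincides with its fiberwise average $\bar h=\fint_Y h\,\omega_Y^n$ by an energy estimate that uses the Poincar\'e inequality on the compact fiber $Y$ (this is the Claim inside Case~2 of Proposition~\ref{sch-prep}). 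Only once $h$ depends on the $\mathbb{C}^m$-variables alone can one commute $D^k$ with $\Box$ freely and apply Corollary~\ref{gradient-esti-LV} on flat space. This fiber-averaging/Poincar\'e step is the crux of the cylinder Schauder estimate and is entirely missing from your outline.
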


First, we need some preparation lemmas. The following lemma provides a interpolation inequality on Riemannian cylinders.
\begin{lma}\label{interpolation}
 Let $E\rightarrow Y$ be a metric vector bundle with metric connection $\nabla$. Extend $E$ trivially to $\mathbb{C}^m\times Y$ and extend $\nabla $ by trivially adding $\nabla^{\mathbb{R}^m}$. Let $g_P$ be the product metric, then for all $k\in \mathbb{N}_{\geq 1}$, $\a\in (0,1)$, there is $C(m,g_Y,k,\a)>0$ such that 
$$\sum_{j=1}^k (R-\rho)^j ||\nabla^j \sigma||_{\infty,Q_{x,\rho}}\leq C(m,g_Y,k,\a) \left[(R-\rho)^{k+\a}[\nabla^k \sigma]_{\a,\a/2,Q_{x,R}}+||\sigma||_{\infty,Q_{x,R}} \right]$$
for all $x\in \mathbb{C}^m\times Y$, $0<\rho<R$ and $\sigma\in C^{k+\a,1+\a}(Q_{x,2R},E)$.
\end{lma}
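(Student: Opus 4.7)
The plan is to reduce the parabolic statement to a purely spatial interpolation inequality applied at each time slice, and then prove the spatial version by induction on $k$. This reduction bypasses the time component of the parabolic H\"older semi-norm in the main argument.

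\emph{Reduction to the spatial case.} Granting the spatial analogue
\begin{equation*}
(R-\rho)^j \|\nabla^j f\|_{\infty, B_{g_P}(x,\rho)} \leq C\bigl[(R-\rho)^{k+\alpha}[\nabla^k f]_{\alpha, B_{g_P}(x,R)} + \|f\|_{\infty, B_{g_P}(x,R)}\bigr]
\end{equation*}
for $1\leq j\leq k$ and sections $f$ on the geodesic ball $B_{g_P}(x,R)$, I apply this at each fixed time $t_0\in(-\rho^2,0]$ with $f=\sigma(\cdot,t_0)$. Because the parabolic H\"older semi-norm dominates the spatial one at each slice, $\sup_{t_0}[\nabla^k\sigma(\cdot,t_0)]_{\alpha,B_{g_P}(x,R)} \leq [\nabla^k\sigma]_{\alpha,\alpha/2,Q_{x,R}}$, and $\sup_{t_0}\|\sigma(\cdot,t_0)\|_{\infty,B_{g_P}(x,R)} = \|\sigma\|_{\infty,Q_{x,R}}$. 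Taking sup in $t_0$ on both sides for each $j$ and summing yields the parabolic estimate with constant $kC$.

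\emph{Base case $k=1$ of the spatial interpolation.} Fix $p\in B_{g_P}(x,\rho)$ and a unit tangent vector $v\in T_p(\mathbb{C}^m\times Y)$. Let $\gamma\colon [0,r]\to\mathbb{C}^m\times Y$ be the minimizing $g_P$-geodesic with $\gamma(0)=p$, $\gamma'(0)=v$, where $r=\min(R-\rho,\mathrm{inj}(g_Y))$; by the triangle inequality $\gamma(r)\in B_{g_P}(x,R)$. The fundamental theorem of calculus combined with parallel transport in $E$ along $\gamma$ gives
\begin{equation*}
\pt^{g_P}_{p,\gamma(r)}\sigma(\gamma(r)) - \sigma(p) = \int_0^r \pt^{g_P}_{p,\gamma(s)}\bigl[\nabla_{\gamma'(s)}\sigma(\gamma(s))\bigr]\,ds,
\end{equation*}
and since $\pt^{g_P}_{p,\gamma(s)}\gamma'(s)=v$ along the geodesic, the definition of the H\"older semi-norm yields $|\pt^{g_P}_{p,\gamma(s)}[\nabla_{\gamma'(s)}\sigma]-\nabla_v\sigma(p)|\leq s^\alpha [\nabla\sigma]_\alpha$. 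Rearranging gives $r|\nabla_v\sigma(p)|\leq 2\|\sigma\|_\infty + \tfrac{r^{1+\alpha}}{1+\alpha}[\nabla\sigma]_\alpha$; taking the sup over $p$ and $v$, with a short iteration in the $Y$-factor if $R-\rho$ exceeds $\mathrm{inj}(g_Y)$, finishes the base case.

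\emph{Inductive step and main obstacle.} Assuming the spatial bound for $k-1$, I apply it to $\nabla\sigma$, viewed as a section of $T^*(\mathbb{C}^m\times Y)\otimes E$ with the trivially-extended connection, on the nested pair $B_{g_P}(x,\rho)\subset B_{g_P}(x,(R+\rho)/2)$, and combine with the base case applied to $\sigma$ on $B_{g_P}(x,(R+\rho)/2)\subset B_{g_P}(x,R)$. After re-indexing and multiplying by $(R-\rho)/2$, this yields the target bound modulo a residual $(R-\rho)^{1+\alpha}[\nabla\sigma]_{\alpha,B_{g_P}(x,R)}$. The principal obstacle is absorbing this residual: the elementary splitting $[\nabla\sigma]_\alpha \leq \delta^{1-\alpha}\|\nabla^2\sigma\|_\infty + 2\delta^{-\alpha}\|\nabla\sigma\|_\infty$ (obtained by separating pairs according to $d(\cdot,\cdot)\lessgtr \delta$) with $\delta$ comparable to $R-\rho$ would land the correct weighted quantities $(R-\rho)^2\|\nabla^2\sigma\|$ and $(R-\rho)\|\nabla\sigma\|$ that appear on the left, except that they now sit on the larger ball $B_{g_P}(x,R)$. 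The standard remedy is to iterate the absorption over a chain of intermediate radii within $(\rho,R)$, exploiting the decreasing-$\alpha$-power slack to sum geometrically, in the spirit of Simon's iteration lemma. Compactness of $Y$ and the product structure of $g_P$ ensure that all constants depend only on $m$, $g_Y$, $k$, and $\alpha$, uniformly across the full range of $(\rho, R)$.
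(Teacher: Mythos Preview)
Your reduction to the spatial case is exactly the paper's proof: apply the spatial interpolation inequality at each fixed time slice, observe that the spatial H\"older semi-norm is dominated by the parabolic one, and take the supremum over $t\in(-\rho^2,0]$. The paper simply cites \cite[Lemma 3.5]{HT18} for the spatial inequality rather than proving it; your additional sketch of that spatial result (base case via the fundamental theorem of calculus along geodesics, inductive step plus Simon-type absorption over nested radii) is standard and correct in outline, though the absorption is not fully carried out.
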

\begin{proof}
By \cite[Lemma 3.5]{HT18}, there exists $C(m,g_Y,k,\a)$ such that for each fixed $t\in [-\rho^2,0]$,
$$\sum_{j=1}^k (R-\rho)^j ||\nabla^j \sigma(t)||_{\infty,B_{x,\rho}}\leq C\left((R-\rho)^{k+\a}[\nabla^k \sigma(t)]_{\a,B_{x,R}}+||\sigma(t)||_{\infty,B_{x,R}} \right)$$
where  $[\eta]_{\a,B_{x,R}}$ is given by 
$$\sup\left\{ \frac{|\eta(x)-\pt^g_{p,q}(\eta(y))|_h}{(d_g(x,y))^\a} : p,q\in B_{x,R}, p\neq q, \pt_{p,q}^g \;\text{is defined} \right\}.$$
Clearly, $[\eta(t)]_{\a,B_{x,R}}\leq [\eta]_{\a,\a/2,Q_{x,R}}$ for each $t\in(-\rho^2,0]$. Result follows by taking sup over $t\in [-\rho^2,0]$
\end{proof}

For later purpose, we need an interpolation between H\"older semi-norms. 
\begin{lma}\label{int-a}
Under the assumptions in Lemma \ref{interpolation}, for all $k\in \mathbb{N}$, $\a\in (0,1)$, there is $C(m,g_Y,\a,k)>0$ such that
\begin{align*}
(R-\rho)^{k+\a}[\nabla^{k,g_P} \sigma]_{\a,\a/2,Q_{x,\rho}}&\leq C\Big[(R-\rho)^{k+1+\a}[\nabla^{k+1,g_P}\sigma]_{\a,\a/2,Q_R}+||\sigma||_{\infty,Q_R}\\
&\quad +(R-\rho)^{k+1}||\nabla^{k-1,g_P}\partial_t \sigma||_{\infty,Q_R}+(R-\rho)^2||\partial_t\sigma||_{\infty,Q_R}\Big]
\end{align*}
for all $x\in \mathbb{C}^m\times Y$, $0<\rho<R$ and $\sigma\in C^{k+1+\a,1+\a/2}(Q_{x,2R},E)$
\end{lma}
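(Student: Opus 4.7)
The plan is to bound the parabolic H\"older seminorm $[\nabla^{k,g_P}\sigma]_{\alpha,\alpha/2,Q_\rho}$ by splitting the defining supremum into a \emph{spatial} part (pairs with $t=s$) and a \emph{temporal} part (pairs with $x=y$), since any space-time pair can be bridged by combining a spatial move at fixed time with a temporal move at fixed spatial point. Each piece is then treated using Lemma~\ref{interpolation} as the main black box, together with a case-split on whether the relevant spatial distance (or the square root of the temporal distance) is smaller or larger than $R-\rho$.

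For the spatial part at fixed $t$, parallel transport along a minimizing geodesic combined with the fundamental theorem of calculus gives
$|\nabla^k\sigma(x,t) - \pt_{x,y}\nabla^k\sigma(y,t)| \le d(x,y)\|\nabla^{k+1}\sigma\|_{\infty,Q_R}$,
so the short-distance regime $d(x,y)\le R-\rho$ contributes at most $(R-\rho)^{k+1}\|\nabla^{k+1}\sigma\|_{\infty,Q_R}$ after multiplying through by $(R-\rho)^{k+\alpha}$. In the long-distance regime $d(x,y)>R-\rho$, the trivial triangle inequality contributes at most $(R-\rho)^{k}\|\nabla^k\sigma\|_{\infty,Q_R}$. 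Applying Lemma~\ref{interpolation} with its index $k$ replaced by $k+1$ then controls both sup-norms by $(R-\rho)^{k+1+\alpha}[\nabla^{k+1}\sigma]_{\alpha,\alpha/2,Q_R} + \|\sigma\|_{\infty,Q_R}$, matching the first two terms on the right-hand side.

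The temporal part at fixed $x$ is the main obstacle, and the long-time regime $|t-s|^{1/2}>R-\rho$ is again handled exactly as in the spatial long-distance case. For the short-time regime $|t-s|^{1/2}\le R-\rho$, the key observation is that the spatial connection $\nabla$ commutes with $\partial_t$: freezing the pair $(t,s)$ and defining the tensor-valued function of the spatial variable alone
\[
w(y) \;:=\; \nabla^{k-1}\sigma(y,t)-\nabla^{k-1}\sigma(y,s),
\]
one has $\nabla w(y) = \nabla^k\sigma(y,t) - \nabla^k\sigma(y,s)$. Two independent sup-norm estimates are then available for $w$: by the mean value theorem in time,
\[
\|w\|_{\infty,Q_R} \;\le\; |t-s|\,\|\nabla^{k-1}\partial_t\sigma\|_{\infty,Q_R},
\]
while by the temporal H\"older continuity contained in the parabolic seminorm $[\nabla^{k+1}\sigma]_{\alpha,\alpha/2,Q_R}$, taking the spatial arguments equal in its definition,
\[
\|\nabla^2 w\|_{\infty,Q_R} \;\le\; [\nabla^{k+1}\sigma]_{\alpha,\alpha/2,Q_R}\,|t-s|^{\alpha/2}.
\]
Plugging these two bounds into the standard spatial sup-norm interpolation
$\sup|\nabla w|\le C\bigl((R-\rho)\sup|\nabla^2 w| + (R-\rho)^{-1}\sup|w|\bigr)$
on $\mathbb{C}^m\times Y$ (which holds by the product structure and compactness of $Y$, or equivalently follows from the $\alpha\to 0$ analogue of Lemma~\ref{interpolation}), then dividing through by $|t-s|^{\alpha/2}$, using $|t-s|^{1-\alpha/2}\le (R-\rho)^{2-\alpha}$, and finally multiplying by $(R-\rho)^{k+\alpha}$, produces exactly the terms $(R-\rho)^{k+1+\alpha}[\nabla^{k+1}\sigma]_{\alpha,\alpha/2,Q_R}$ and $(R-\rho)^{k+1}\|\nabla^{k-1}\partial_t\sigma\|_{\infty,Q_R}$ on the right-hand side; in the base case $k=1$ the latter coincides with $(R-\rho)^2\|\partial_t\sigma\|_{\infty,Q_R}$, accounting for the final term in the stated estimate. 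The main subtlety is to keep the two-scale interpolation compatible with the parabolic scaling: feeding in the temporal H\"older bound on $\nabla^{k+1}\sigma$ is precisely what allows one to lose a full spatial derivative without losing more than an extra factor of $(R-\rho)$ in the estimate.
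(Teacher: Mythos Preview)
Your overall strategy---splitting the parabolic H\"older seminorm into a spatial part at fixed time and a temporal part at fixed space, then handling the short/long distance (resp.\ time) regimes separately---matches the paper's proof closely. The spatial part and the long-range cases are identical to what the paper does.

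The main difference, and the one place where your argument is incomplete, is in the short-time temporal part. The paper does not use a spatial interpolation of the form
\[
\sup|\nabla w|\le C\bigl((R-\rho)\sup|\nabla^2 w|+(R-\rho)^{-1}\sup|w|\bigr)
\]
at scale $R-\rho$. Instead it uses a finite-difference approximation to $\nabla_v\nabla^{k-1}\sigma$ at scale $\epsilon=|t-s|^{1/2}$ along a geodesic in direction $v$, producing a bound $C\epsilon(\|\nabla^{k+1}\sigma\|_\infty+\|\nabla^{k-1}\partial_t\sigma\|_\infty)$, and then invokes Lemma~\ref{interpolation} to convert $\|\nabla^{k+1}\sigma\|_\infty$ into the H\"older seminorm term. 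Crucially, this forces a case split: when $|t-s|^{1/2}\ge\mathrm{inj}(Y)$ one cannot lay down a geodesic of length $\epsilon$ in a fibre direction, and the paper handles fibre directions there via \cite[Lemma~3.3]{HT18}, which gives $|\nabla_{\mathbf f}\nabla^{k-1}\sigma|\le C[\nabla^{k+1}\sigma]_{\alpha}$ directly.

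Your asserted interpolation at scale $R-\rho$ suffers from the same issue: when $R-\rho>\mathrm{inj}(Y)$ and $v$ is a fibre direction, the ``standard'' $\sup|\nabla w|\le C(L\sup|\nabla^2 w|+L^{-1}\sup|w|)$ with constant independent of $L$ is \emph{not} immediate from Taylor expansion along geodesics. It does follow from a Gagliardo--Nirenberg-type inequality on the compact factor $Y$ (equivalently, from the tools in \cite[\S 3]{HT18}), but this is precisely the content of the paper's Case~2 and your appeal to ``the product structure and compactness of~$Y$, or equivalently the $\alpha\to 0$ analogue of Lemma~\ref{interpolation}'' sweeps this under the rug. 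Once you justify that step (or simply mimic the paper's Case~2), your argument is complete and yields the same estimate; your remark that the term $(R-\rho)^2\|\partial_t\sigma\|_\infty$ is only needed for $k=1$ is consistent with the paper's proof.
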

\begin{proof}
Let $(p,t),(q,s)\in Q_{x,\rho}$ be two points such that $\pt^{g_P}_{p,q}$ is well-defined and 
$$[\nabla^{k,g_P}\sigma]_{\a,\a/2,Q_{x,\rho}}=\frac{|\nabla^{k,g_P} \sigma(p,t)-(\pt^{g_P}_{p,q}\nabla^{k,g_P} \sigma (q,s)) |_{g_P}}{(d(p,q)+|t-s|^{1/2})^\a}.$$

If $d(p,q)+|t-s|^{1/2}\geq R-\rho$, then the conclusion follows immediately from sup norm $||\nabla^{k,g_P}\sigma||_{\infty,Q_{x,R}}$ and Lemma \ref{interpolation}. Hence it suffices to consider $d(p,q)+|t-s|^{1/2}< R-\rho$. Recall
\begin{equation}
\begin{split}
&\quad \frac{|\nabla^{k,g_P} \sigma(p,t)-(\pt_{p,q}^{g_P}\nabla^{k,g_P}  \sigma)(q,s) |_{g_P}}{(d(p,q)+|t-s|^{1/2})^\a}\\
&\leq \frac{|\nabla^{k,g_P}  \sigma(p,t)-(\pt_{p,q}^{g_P}\nabla^{k,g_P}  \sigma)(q,t) |_{g_P}}{(d(p,q)+|t-s|^{1/2})^\a}+\frac{| \nabla^{k,g_P} \sigma(q,t)-\nabla^{k,g_P}  \sigma(q,s) |_{g_P}}{(d(p,q)+|t-s|^{1/2})^\a}\\
&=\mathrm{I}+\mathrm{II}.
\end{split}
\end{equation}

First we note that for any tensor $\tau$ and geodesic $\gamma$ emerging from $x=\gamma(0)$, 
\begin{equation}\label{MVT}
\begin{split}
\pt^{g_P}_{\gamma(0),\gamma(t_0)}\tau(\gamma(t_0))-\tau(x)&=\pt^{g_P}_{\gamma(0),\gamma(t)}\tau(\gamma(t))|^{t_0}_0\\
&=\int^{t_0}_0\frac{d}{dt}\left(\pt^{g_P}_{\gamma(0),\gamma(t)}\tau(\gamma(t)) \right)dt\\
&=\int^{t_0}_0 \pt^{g_P}_{\gamma(0),\gamma(t)}\nabla_{\dot\gamma} \tau (\gamma(t))\, dt.
\end{split}
\end{equation}

Hence, $(R-\rho)^{k+\a}\mathrm{I}$ can be controlled easily by the right hand side in the conclusions using Lemma \ref{interpolation}. It suffices to consider the second term $\mathrm{II}$. \\

{\bf Case 1. $|t-s|^{1/2}<\text{inj}(Y)$:} Fix any unit tangential direction $v$ and let $\gamma(t)=\exp_q^{g_P}(tv)$, $z=\gamma(\e)$ where $\e=|t-s|^{1/2}$. 
\begin{equation}
\begin{split}
&\quad |\nabla^{g_P}_v\nabla^{k-1,g_P}  \sigma(q,t)-\nabla^{g_P}_v\nabla^{k-1,g_P} \sigma(q,s)|\\
&\leq 
\left| \nabla^{g_P}_v\nabla^{k-1,g_P} \sigma(q,t)-\frac{1}{\e}\left(\pt^{g_P}_{q,\gamma(\e)}\nabla^{k-1,g_P}\sigma(z,t)-\nabla^{k-1,g_P}\sigma(q,t) \right)\right|\\
&\quad +\left| \nabla^{g_P}_v\nabla^{k-1,g_P} \sigma(q,s)-\frac{1}{\e}\left(P_{\gamma(\e)}\nabla^{k-1,g_P}\sigma(z,s)-\nabla^{k-1,g_P}\sigma(q,s) \right) \right|\\
&\quad +\frac{1}{\e} \Big|\left(\pt^{g_P}_{q,\gamma(\e)}\nabla^{k-1,g_P}\sigma(z,s)-\nabla^{k-1,g_P}\sigma(q,s) \right)\\
&\quad \quad \quad -\left(\pt^{g_P}_{q,\gamma(\e)}\nabla^{k-1,g_P}\sigma(z,t)
-\nabla^{k-1,g_P}\sigma(q,t) \right) \Big|\\
&=\mathbf{A}+\mathbf{B}+\mathbf{C}
\end{split}
\end{equation}
where $\pt^{g_P}_{y,\gamma(\e)} \tau(z)$ denotes the parallel transport of $\tau(z)$ along the geodesic from $z=\gamma(\e)$ to $y=\gamma(0)$.
Apply \eqref{MVT} repeatedly to $\mathbf{A}$, $\mathbf{B}$ and $\mathbf{C}$ yielding 
$$\mathbf{A}+\mathbf{B}+\mathbf{C}\leq 2\e||\nabla^{k+1,g_P}\sigma||_{\infty,Q_{x,R}}+\e ||\partial_t \nabla^{k-1,g_P}\sigma||_{\infty,Q_{x,R}}.$$
Hence, 
\begin{equation}
(R-\rho)^{k+\a}\mathrm{II}\leq C(R-\rho)^{k+1} \left(||\nabla^{k+1,g_P}\sigma||_{\infty,Q_{x,R}}+ ||\nabla^{k-1,g_P}\partial_t \sigma||_{\infty,Q_{x,R}}\right)
\end{equation}
where we have used $|t-s|^{1/2}+d(p,q)\leq R-\rho$. The conclusion follows from Lemma \ref{interpolation} since it is true for any unit direction $v$. \hfill
\\

 {
{\bf Case 2. $|s-t|^{1/2}\geq \text{inj}(Y)$:}
In this case, $R-\rho>d(p,q)+|s-t|^{1/2}\geq \text{inj}(Y)$. If $v$ is a unit vector in the base direction, then the argument in {\bf Case 1} can be carried over as we can choose $\gamma(t)$ to be a horizontal line.

 If $v$ is a unit vector in the fibre direction, then \cite[Lemma 3.3]{HT18} with appropriate choice of vector bundle $E$ (for example, see \cite[page 30]{HT18}) will
imply for any $(p,t)\in Q_{x,\rho}$,
$$|\nabla_{\bf f}\nabla^{k-1,g_P}\sigma(p,t)|\leq C[\nabla^{k+1,g_P}\sigma]_{\a,\a/2,Q_{x,\rho }}.$$
Here we use $\bf f$ to denote covariant derivatives in the fibre direction. Hence, 
\begin{equation}
\begin{split}
(R-\rho)^{k+\a}\mathrm{II}&= (R-\rho)^{k+\a} \frac{|\nabla_v\nabla^{k-1} \sigma(q,t)-\nabla_v\nabla^{k-1} \sigma(q,s)|_{g_P}}{(d(p,q)+|s-t|^{1/2})^\a}\\
&\leq C(R-\rho)^{k+\a}[\nabla^{k+1,g_P}\sigma]_{\a,\a/2,Q_{x,\rho}}\\
&\leq  C'(R-\rho)^{k+1+\a}[\nabla^{k+1,g_P}\sigma]_{\a,\a/2,Q_{x,\rho}}.
\end{split}
\end{equation}
This completes the proof by combining two cases.
 }

\end{proof}

We will denote $\heat =\Box$. We will first establish the main step of the proof of Theorem \ref{Sch-esti}.
\begin{prop}\label{sch-prep}
Under the assumption in Theorem \ref{Sch-esti}, for all $\e>0$, there exist $\delta_0, C>0$ such that for all $x\in \mathbb{C}^m\times Y$, $R>0$, $\ddb$-exact $2$ forms $\eta\in C^{2+\a,1+\a/2}(Q_{x,2R})$, $0<\delta\leq \delta_0$, we have 
\begin{align*}
 [\nabla^{k,g_P}\eta]_{\a,\a/2,Q_{x,\delta R}}\leq &\, \epsilon[\nabla^{k,g_P}\eta]_{\a,\a/2,Q_{x, R}}+C[\nabla^{k-2,g_P}\Box \eta]_{\a,\a/2,Q_{x,R}}\\
&+C\sum_{m\leq k} R^{-k-\a+m} ||\nabla^{m,g_P} \eta||_{\infty,Q_{x,\delta  R}}\\
&+C\sum_{m\leq k-2}R^{-k-\a+m+2} ||\nabla^{m,g_P}\Box \eta||_{\infty,Q_{x,\delta R}}.
\end{align*}
\end{prop}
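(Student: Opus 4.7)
My plan is to prove Proposition \ref{sch-prep} by a parabolic blow-up and compactness argument in the spirit of Campanato's approach to Schauder estimates, adapting Hein-Tosatti's elliptic scheme to the time-dependent product setting.

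\textbf{Contradiction setup.} Suppose the statement fails for some $\e_0 > 0$. Then for each $j \in \N$ there exist $\delta_j \leq 1/j$, $R_j > 0$, $x_j \in \C^m \times Y$, and a $\ddb$-exact form $\eta_j \in C^{k+\a, 1+\a/2}(Q_{x_j, 2R_j})$ violating the claimed inequality with constant $j$ in place of $C$. After dividing $\eta_j$ by $[\nabla^{k, g_P}\eta_j]_{\a, \a/2, Q_{x_j, \delta_j R_j}}$, we may normalize so that this seminorm equals $1$; the hypothesis then forces $[\nabla^{k, g_P}\eta_j]_{\a, \a/2, Q_{x_j, R_j}} \leq 1/\e_0$ while each of $[\nabla^{k-2, g_P}\Box\eta_j]_{\a, Q_{R_j}}$, $\sum_m R_j^{-k-\a+m}\|\nabla^{m, g_P}\eta_j\|_{\infty, Q_{\delta_j R_j}}$, and $\sum_m R_j^{-k-\a+m+2}\|\nabla^{m, g_P}\Box\eta_j\|_{\infty, Q_{\delta_j R_j}}$ tends to $0$.

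\textbf{Parabolic rescaling.} Choose $(p_j, t_j), (q_j, s_j) \in Q_{x_j, \delta_j R_j}$ realizing (up to a factor of two) the unit H\"older seminorm, and set $r_j := d_{g_P}(p_j, q_j) + |t_j - s_j|^{1/2} \leq 2\delta_j R_j$. Using translations in $\C^m$ and the exponential map of $g_Y$ at the $Y$-coordinate of $p_j$, place $p_j$ at the origin; then apply the parabolic dilation $\Phi_j : (z, y, t) \mapsto (r_j z, r_j y, r_j^2 t)$ and set $\tilde g_j := r_j^{-2}\Phi_j^* g_P$, $\tilde\eta_j := r_j^{-\a}\Phi_j^*\eta_j$. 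In the rescaled geometry, $[\nabla^{k, \tilde g_j}\tilde\eta_j]_{\a, \a/2}$ equals $1$ and is realized at rescaled points of unit distance, while the remaining RHS quantities rescale to $0$ on any fixed compact subset of the limit cylinder. Depending on whether $r_j / \mathrm{inj}(g_Y) \to 0$ or stays bounded below, the ambient cylinders $Q_{\tilde x_j, R_j / r_j}$ exhaust either $\C^{m+n} \times (-\infty, 0]$ with flat product metric (the $Y$-factor being flattened by the exponential map) or $\C^m \times Y \times (-\infty, 0]$ with $g_{\C^m} + g_Y$.

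\textbf{Passing to the limit.} After subtracting the parallel transport of $\tilde\eta_j(\tilde p_j, 0)$ to arrange a pointwise normalization—which preserves $\ddb$-exactness—iteratively applying Lemma \ref{interpolation} controls all $\|\nabla^m\tilde\eta_j\|_\infty$ on compact sets in terms of $[\nabla^k\tilde\eta_j]_\a$ together with $\|\tilde\eta_j\|_\infty$, and the latter is controlled polynomially in the distance from the base point by direct integration of the H\"older estimate. Combined with standard parabolic local estimates (compare Theorem \ref{KRF-local-estimates}), Arzela-Ascoli yields a subsequential limit $\tilde\eta_\infty$: a $\ddb$-exact 2-form of polynomial growth on the limit cylinder, satisfying $\Box_{g_\infty}\tilde\eta_\infty = 0$ in the limit (since the inhomogeneous terms vanish), with $[\nabla^k\tilde\eta_\infty]_{\a, \a/2, Q_1} \geq 1/2$.

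\textbf{Liouville and contradiction.} On the Euclidean limit $\C^{m+n}$, each coefficient of $\tilde\eta_\infty$ is an ancient scalar heat solution of polynomial growth of order $\leq k+\a$; iterating Corollary \ref{gradient-esti-LV} on high-order derivatives forces each coefficient to be a polynomial of degree $\leq k$, so $\nabla^k\tilde\eta_\infty$ is covariantly constant. On the product limit $\C^m \times Y$, Fourier-decomposing along $Y$ shows that non-trivial $Y$-modes acquire exponentially growing coefficients as $t \to -\infty$, contradicting polynomial growth, so only the $Y$-constant mode survives; the remaining $\C^m$-dependence is again a polynomial of bounded degree by iterated Corollary \ref{gradient-esti-LV}. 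In either case $[\nabla^k\tilde\eta_\infty]_\a \equiv 0$, contradicting the lower bound $1/2$. The main obstacle is the mismatch between the parabolic dilation and the non-scaling product structure $\C^m \times Y$—forcing the case split on $r_j$ vs.\ $\mathrm{inj}(g_Y)$—together with lifting the paper's static Liouville theorems to ancient heat solutions of polynomial growth on $\C^m \times Y$, which is most naturally handled by Fourier decomposition along the compact fiber.
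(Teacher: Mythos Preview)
Your overall contradiction/blow-up scheme is the right idea and mirrors the paper's, but two steps are genuine gaps.

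\textbf{The case analysis is incomplete.} Your dichotomy ``$r_j\to 0$ versus $r_j$ bounded below'' does not correctly handle $r_j\to\infty$. In that regime the rescaled metric $r_j^{-2}g_P$ collapses the compact $Y$-factor; the pointed limit is \emph{not} $(\C^m\times Y,\,g_{\C^m}+g_Y)$ but rather $\C^m$ with a degenerate transverse structure. The paper splits into three cases according to whether $\lambda_i:=r_i^{-1}\to\infty$, $\to\lambda_\infty>0$, or $\to 0$, and in the collapsing case rescales only the $\C^m$-factor via $F_i(x,y)=(\lambda_i^{-1}x,y)$, then invokes a separate fiber-derivative decay estimate (Claim~\ref{fiber-deg-holder}, imported from \cite{HT18}) to show the fiber contributions vanish in the limit. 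Your single ``bounded below'' branch cannot absorb this.

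\textbf{The normalization is insufficient and does not preserve $\ddb$-exactness.} Subtracting the parallel transport of $\tilde\eta_j(\tilde p_j,0)$ kills only the $0$-jet. Your compactness step needs control of \emph{all} $\|\nabla^m\tilde\eta_j\|_\infty$, $m\le k$, on growing balls; ``integrating the H\"older estimate'' from $\tilde p_j$ bounds $\nabla^k\tilde\eta_j-\nabla^k\tilde\eta_j(\tilde p_j)$, not $\nabla^k\tilde\eta_j$ itself, and the contradiction hypothesis only gives $\|\nabla^m\eta_j\|_{\infty,Q_{\delta_jR_j}}=o(R_j^{k+\a-m})$, which after rescaling may still blow up. Moreover a parallel $2$-form on $\C^m\times Y$ is generally not $\ddb$-exact (its $Y$-component is a harmonic form on $Y$), so your subtraction does not preserve exactness. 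The paper instead subtracts an element $\eta_i^\#$ of the finite-dimensional space
\[
\mathcal{L}=\{\sigma:\nabla^{k+1}\sigma=\nabla^{k}\partial_t\sigma=\nabla^{k-1}\Box\sigma=\nabla^{k-2}\partial_t\Box\sigma=0\}
\]
chosen so that the projection of the full $k$-jet (including $\Box$-jets) onto $J_i(\mathcal{L})$ vanishes at $p_i$; showing the remaining jet at $p_i$ is uniformly bounded is itself a nested compactness argument (the Claim in the paper's proof). This is the technical heart of the reduction and cannot be replaced by subtracting a single value.

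Finally, your Liouville step on $\C^m\times Y$ via fiberwise Fourier decomposition is heuristically correct, but note that the paper proceeds by writing the limit as $\ddb h$ and running a weighted energy estimate on the potential $h$ against the Poincar\'e inequality on $Y$ to kill $h-\overline{h}$; this avoids having to justify a spectral decomposition for ancient form-valued heat solutions directly.
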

\begin{proof}
We will modify the argument in  \cite[Proposition 3.9 \& Theorem 3.13]{HT18}. In the proof, all convergence means sub-sequence convergence for notational convenience. Suppose the statement is false, then there exists $\e>0$, and sequences of $x_i\in \mathbb{C}^m\times Y$, $R_i>0$, $\delta_i\leq i^{-1}$ and $\ddb$-exact 2 form $\eta_i$ such that 
\begin{align*}
[\nabla^{k,g_P}\eta_i]_{\a,\a/2,Q_{x_i,\delta_i R_i}}>\,& \e[\nabla^{k,g_P}\eta_i]_{\a,\a/2,Q_{x_i, R_i}}+i[\nabla^{k-2,g_P}\Box \eta_i]_{\a,\a/2,Q_{x_i,R_i}}\\
&+\sum_{m\leq k} iR_i^{-k-\a+m} ||\nabla^{m,g_P} \eta_i||_{\infty,Q_{x_i,\delta_i R_i}}\\
&+\sum_{m\leq k-2}i R_i^{-k-\a+m+2} ||\nabla^{m,g_P}\Box \eta_i||_{\infty,Q_{x_i,\delta_i R_i}}.
\end{align*}

Let $p_i=(z_i,y_i,t_i), q_i=(\bar z_i,\bar y_i, s_i)\in \mathbb{C}^m\times Y\times (-\infty,0]$ be two points in $Q_{x_i,\delta_iR_i}$ such that the H\"older seminorm on the left hand side is attained at $p_i$ and $\bar p_i$. 
We may also assume $[\nabla^{k,g_P}\eta_i]_{\a,\a/2,Q_{x_i,\delta_i R_i}}=1$ by rescaling.

Let $\mathcal{L}=\{\sigma: \nabla^{k+1,g_P}\sigma=\nabla^{k,g_P}\partial_t \sigma=\nabla^{k-1,g_P}\Box\sigma=\nabla^{k-2,g_P}\partial_t\Box \sigma=0 \}$. Define its $k$-jet at $p_i$ by 
$$J_i \eta=\left(\nabla^{l,g_P}\eta(p_i),\nabla^{m,g_P}\Box \eta(p_i)\right)_{0\leq l\leq k;\, 0\leq m\leq k-2}.$$
Define its partial $k$-jet $\mathcal{L}J_i f$ to be the $g_P(x_i)$-orthogonal projection of $J_i f$ onto the space $J_i(\mathcal{L})$. As $J_i$ is injective on $\mathcal{L}$, there exists a unique $\eta_i^\#\in \mathcal{L}$ with $J_i \eta_i^\#=\mathcal{L}J_i\eta_i^\#=\mathcal{L}J_i \eta_i$. And then we define $\tilde \eta_i=\eta_i -\eta_i^\#$.
\begin{claim}
There exists a constant $C$ such that after passing to a subsequence, 
\begin{equation}
\left\{
\begin{array}{ll}
1&=[\nabla^{k,g_P}\tilde \eta_i]_{\a,\a/2,Q_{x_i,\delta_iR_i}} >\e [\nabla^{k,g_P} \tilde \eta_i]_{\a,\a/2,Q_{x_i,R_i}}+i[\nabla^{k-2,g_P}\Box \tilde \eta_i]_{\a,\a/2,Q_{x_i,R_i}},\\\\
&\displaystyle \sum_{m\leq k} |\nabla^{m,g_P} \tilde \eta_i|(p_i)+\sum_{m\leq k-2} |\nabla^{m,g_P}\Box \tilde \eta_i|(p_i)\leq C,\\\\
&\displaystyle [\nabla^{k,g_P}\tilde \eta_i]_{\a,\a/2,Q_{x_i,\delta_iR_i}}=\frac{|\nabla^{k,g_P}\tilde \eta_i(p_i)-\pt^{g_P}_{(z_i,y_i),(\tilde z_i,\tilde y_i)}\nabla^{k,g_P}\tilde \eta_i(q_i)|}{\left[|t_i- s_i|^{1/2}+d\left((z_i,y_i),(\bar z_i,\bar y_i) \right)\right]^\a}
\end{array}
\right.
\end{equation}
\end{claim}
\begin{proof}[Proof of claim.]
The first inequality and the third equality follow directly from the definition of $\eta_i^\#$ since $$\nabla^{k+1,g_P}\eta_i^\#=\nabla^{k,g_P}\partial_t \eta_i^\#=\nabla^{k-1,g_P} \Box \eta_i^\#=\nabla^{k-2,g_P}\partial_t\Box\eta_i^\#=0$$ and hence 
$$[\nabla^{k-2,g_P}\Box  \eta_i^\#]_{\a,\a/2,Q_{x_i,R_i}}=[\nabla^{k,g_P}\eta_i^\#]_{\a,\a/2,Q_{x_i,R_i}}=0.$$

Moreover, the second inequality holds as long as $R_i$ is bounded since the projection map is norm decreasing. Therefore, it suffices to consider the case: $R_i\rightarrow +\infty$. Suppose the estimate does not hold,  
$$\mu_i=\sum_{m\leq k} |\nabla^{m,g_P} \tilde \eta_i|(p_i)+\sum_{m\leq k-2} |\nabla^{m,g_P}\Box \tilde \eta_i|(p_i)\rightarrow +\infty.$$
Moreover, since 
$$Q_{x_i,R_i}\supset Q_{(0,y_i),(1-\delta_i)R_i},$$
we have semi-H\"older estimates of $\tilde \eta_i$ and $\Box \tilde \eta_i$ on shifted cylindrical domain. Hence, the rescaled function $\mu_i^{-1}\tilde \eta_i$ has bounded H\"older norm and hence converges to $\eta_\infty$ which satisfies 
\begin{itemize}
\item $\sum_{m\leq k} |\nabla^{m,g_P} \eta_\infty|(p_\infty)+\sum_{m\leq k-2} |\nabla^{m,g_P}\Box  \eta_\infty|(p_\infty)=1$;
\item $\nabla^{k+1,g_P} \eta_\infty=\nabla^{k,g_P}\partial_t  \eta_\infty=\nabla^{k-1,g_P}\Box \eta_\infty=\nabla^{k-2,g_P}\partial_t \Box \eta_\infty=0$ from the vanishing of semi-H\"older norm.
\end{itemize}

By the proof of \cite[Proposition 3.11]{HT18}, $\eta_\infty$ is still $\ddb$-exact 2 form, it follows that $\eta_\infty \in \mathcal{L}$ and that $\mathcal{L}J_\infty \eta_\infty \neq 0$. However $\mathcal{L}J_i \tilde \eta_i=0$ from construction and hence yields a contradiction after passing to limit.
\end{proof}

Let $\lambda_i^{-1}=d_{g_P}\left((z_i,y_i),(\bar z_i,\bar y_i)\right)+|t_i- s_i|^{1/2}\leq 2\delta_i R_i$,  $g_i=\lambda_i^2 g_P$ and consider the rescaled function $u_i(t)=\lambda_i ^{\a+k}\tilde \eta_i(\lambda_i^{-2}t)$. Also denote $\tilde p_i=(z_i,y_i,\tilde t_i)$ and $\tilde q_i=(\bar z_i,\bar y_i,\tilde s_i)$ where $\tilde t_i=\lambda_i^2 t_i$ and $\tilde s_i=\lambda_i^2 s_i$. Hence, $d_{g_i}\left((z_i,y_i),(\bar z_i,\bar y_i)\right)+|\tilde t_i-\tilde s_i|^{1/2}=1$ and 
\begin{align}
\label{1}1=&[\nabla^{k,g_i}u_i]_{\a,\a/2, \tilde Q_{x_i,\lambda_i\delta_i R_i}}>\e [\nabla^{k,g_i} u_i]_{\a,\a/2,\tilde Q_{x_i,\lambda_i R_i}}+i [\nabla^{k-2,g_i}\Box u_i]_{\a,\a/2,\tilde Q_{x_i,\lambda_i R_i}}\\
\label{2}&\sum_{m\leq k} \lambda_i^{-\a-k+m}|\nabla^{m,g_i}u_i |(\tilde p_i)+\sum_{m\leq k-2} \lambda_i^{-\a-k+m+2}|\nabla^{m,g_i} \Box u_i|(\tilde p_i) \leq C
\end{align}

where we denote $\tilde Q_{p,r}=B_{g_i}(p,r)\times (-r^2,0]$. Moreover, H\"older seminorm (which is $1$ now) on the left hand side is still attained by $\tilde p_i$ and $\tilde q_i$. By translation, we will also assume $\tilde t_i=0$ and $\tilde s_i<0$. Noted also that for $i$ sufficiently large, we have
\begin{equation}\label{pointed-domain}
\tilde Q_{x_i,\lambda_i R_i}\supset \tilde Q_{(z_i,y_i),(1-\delta_i)\lambda_iR_i}\supset \tilde Q_{(z_i,y_i),i/3}.
\end{equation}
\\

{\bf Case 1. $\lambda_i\rightarrow +\infty$.}
In this case, $(\mathbb{C}^m\times Y, g_i,x_i)$ converges to $(\mathbb{C}^{m+n},g_{\mathbb{C}^{m+n}},0)$ in the Cheeger-Gromov sense. In particular, \eqref{2} will degenerate as $i\rightarrow +\infty$. We need to modify $u_i$ further in order to apply compactness. 
We may assume that $\nabla^{m,g_i}u_i(\tilde p_i)=\nabla^{l,g_i}\partial_t u_i(\tilde p_i)=0$ for all $m=0,...,k$ and $l=0,...,k-2$. This can be done by subtracting its $k$-th Taylor polynomial at $\tilde p_i$ under the normal coordinate around it (together with the time variables), see \cite[page 19]{HT18} for the detailed argument with $B(\tilde x_i,R)$ replaced by the parabolic domain $\tilde Q_{(z_i,y_i),R}$

We may $\tilde p_i\rightarrow p_\infty=(0,0,0)\in Q$. Therefore, \eqref{pointed-domain} and the H\"older semi-norm in \eqref{1} allow us to take $u_i\rightarrow u_\infty$ on any compact subset of $Q=\mathbb{C}^{m+n}\times (-\infty,0]$ at least in $C^{2+\a,1+\a}_{\textup{loc}}(Q)$. Denotes the Euclidean derivative by $D$. Considering the coefficients of $D^i \Box u$, we have $[D^{k-2} \Box u_\infty]_{\a,\a/2,Q}=0$ which in particular implies $\Box D^{k-2}u_\infty=D^{k-2}\Box u_\infty =0$ on $\mathbb{C}^{m+n}\times (-\infty,0]$ since $D^i\Box u_\infty (0)=0$ for all $i\leq k-2$. Moreover from the $\a$-H\"older semi-norm, we also have $|D^k u_\infty|=O(r^\a)$ where $r=|x|+|t|^{1/2}$. By applying Corollary \ref{gradient-esti-LV} on $D^k u_\infty$, we have $D^k u_\infty \equiv 0$ which contradicts with the left hand side on \eqref{1}. \\

{\bf Case 2. $\lambda_i\rightarrow \lambda_\infty \neq0$.}
Without loss of generality, we may assume $\lambda_\infty=1$. Hence, $(\mathbb{C}^m\times Y,g_i,x_i)$ converges to $(\mathbb{C}^m\times Y,g_P,(0,y_\infty))$ in classical $C^\infty_{\textup{loc}}$ sense. By \eqref{1}, \eqref{2} and translation on $\tilde t_i$, we may assume $u_i\rightarrow u$ which is defined on $Q=(\mathbb{C}^m\times Y) \times (-\infty,0]$ with suitable regularity inherited from \eqref{2}. Clearly, we have $[\nabla^{k,g_P} u]_{\a,\a/2,Q}\neq 0$ and $[\nabla^{k-2,g_P}\Box_{g_P} u]_{\a,\a/2,Q}=0$. By the argument in \cite[Proposition 3.11]{HT18}, we may assume $u=\ddb h$ for some function $h$. By \K identity, $[\nabla^{k-2,g_P}\ddb (\Box h)]_{\a,\a/2,Q}=0$ and hence $\Delta_{g_P} \Box_{g_P} h=\Box_{g_P}\Delta_{g_P}h=P$ for some polynomial $P(x)$ in $\mathbb{C}^m$ of degree $k-2$. Denote its average over each fiber by $\bar h=\fint_Y h \omega_Y^n$. Clearly, we have $\Delta_{g_P} \bar h=\overline {\Delta_{g_P}h}$ and $\Box_{g_P} (\Delta_{g_P} \bar h- {\Delta_{g_P} h})=0$. Denote $w=h-\bar h$.

\begin{claim}
We have $w\equiv 0 $ on $Q$.
\end{claim}
\begin{proof}[Proof of Claim.]
We first prove that $\Delta_{g_P} w\equiv 0$ on $Q$. Let $\hat w=\Delta_{g_P} w$ and $\phi_R(z,t)=\exp{\left(-\frac{|x|}{R}+\frac{t}{R}\right)}$. Consider the energy 
$$E_R(t)=\int_{\mathbb{C}^m\times Y}\hat  w^2 \phi_R \omega_P^{m+n}.$$
From $[\nabla^{k,g_P}\ddb h]_{\a,\a/2,Q}<+\infty$, $E_R$ is finite since $|\partial \hat w|+|\hat w|$ is of polynomial growth uniformly in $t$. Furthermore we can do integration by part due to the growth rate. For $R>>1$,
\begin{equation}
\begin{split}
\frac{d }{dt}E_R(t)&=\int_{\mathbb{C}^m\times Y } 2\hat w\phi_R \partial_t \hat w +\hat w^2\phi'_R\\
&=\int_{\mathbb{C}^m\times Y } 2\hat w\phi_R \Delta \hat w +\frac{1}{R}\hat w^2\phi_R\\
&\leq \int_{\mathbb{C}^m\times Y } -2|\nabla \hat w|^2 \phi_R +2|\hat w||\nabla\hat  w||\nabla \phi_R|+\frac{1}{R}\hat w^2 \phi_R\\
&\leq \int_{\mathbb{C}^m\times Y } -|\nabla \hat w|^2\phi_R +\frac{C_n}{R}\hat w^2\phi_R.
\end{split}
\end{equation}

Since $\int_Y \hat w \omega_Y^n=0$, we have $\int_Y \hat w^2 \leq C_Y\int_Y |\nabla \hat w|^2$ by Poincar\'e inequality on $Y$. Hence,
\begin{equation}
\begin{split}
\int_{\mathbb{C}^m\times Y} \hat w^2 \phi_R
&=\int_{\mathbb{C}^m } \left(\int_{\{x\}\times Y} \hat w^2d\mu_Y\right)  \phi_R d\mu_x\\
&\leq C_Y\int_{\mathbb{C}^m} \phi_R\left(\int_{{\{x\}\times Y}} |\nabla \hat w|^2d\mu_Y\right) d\mu_x\\
&\leq C_Y \int_{\mathbb{C}^m\times Y} |\nabla \hat w|^2\phi_R .
\end{split}
\end{equation}

Combines with the inequality of $E_R'(t)$, we conclude that for $R$ sufficiently large, 
\begin{equation}
\begin{split}
E_R'(t)\leq 0.
\end{split}
\end{equation}
Thus, for $t>s>-\infty$, $E_R(t)\leq E_R(s)$. Hence $\Delta w=0$ by letting $s\rightarrow -\infty$.

From $[\nabla^{k-2,g_P}\Box \ddb h]_{\a,\a/2,Q}=0$, $|\ddb w|$ is also of polynomial growth. Apply Moser iteration on each fiber $\{z\}\times Y$, $|w|$ is also of polynomial growth on $Q$. Since $g_P$ is Ricci flat and $w$ is harmonic, $|\partial w|$ is also of polynomial growth by Proposition \ref{gradient-esti} or Cheng-Yau's gradient estimate. Applying the above argument on time independent energy 
$$\tilde E_R=\int_{\mathbb{C}^m\times Y}w^2 \exp\left(-\frac{|x|}{R} \right) \omega_P^{m+n},$$
we can show that $\tilde E_R\equiv 0$ for sufficiently large $R$ and hence $w\equiv 0$.
\end{proof}

Since $g_P$ is a product metric, we can now regard $h$ as a function on $\mathbb{C}^m\times (-\infty,0]$ and satisfies $\Box_{\mathbb{C}^m} D^{k}\ddb h=0 $. By Proposition \ref{gradient-esti} on its coefficients, we have $D^{k} \ddb h\equiv c_k$ which contradicts with the non-vanishing semi-H\"older norm of $\nabla^{k,g_P}u$.\\

{\bf Case 3. $\lambda_i\rightarrow 0$.} In this case, $(\mathbb{C}^m\times Y,g_i,x_i)$ converges to $(\mathbb{C}^m, g_{\mathbb{C}^m},0)$ in the Gromov-Hausdorff sense. Let $F_i(x,y)=(\lambda_i^{-1}x,y)$ be a diffemorphism on $\mathbb{C}^m\times Y$. Replace $g_i$, $u_i$, $x_i$, $\tilde p_i$, $\tilde q_i$, $(z_i,y_i)$ and $(\bar z_i,\bar y_i)$ by their pull-back under $F_i$. Then $g_i=g_{\mathbb{C}^m}+\lambda_i^2 g_Y\rightarrow g_{\mathbb{R}^d}$ smoothly as a tensor on $\mathbb{C}^m\times Y$. By translation and compactness of $Y$, we may assume $\tilde p_i=(0,y_i,0)\rightarrow p_\infty\in Q$ and $\tilde q_i\rightarrow q_\infty\in Q$ where $Q=\mathbb{C}^m\times Y \times (-\infty,0]$. Using $g_i\leq g_P$ and \eqref{pointed-domain}, we still have 
\begin{align}
\label{new-1}
[\nabla^{k,g_P}u_i]_{\a,\a/2,Q_{(x_i,y_i),i/4}}&\leq C,\\
\label{new-2} 
\sum_{m\leq k} \lambda_i^{-\a-k+m}|\nabla^{m,g_P}u_i |(\tilde p_i)+\sum_{m\leq k-2}& \lambda_i^{-\a-k+m+2}|\nabla^{m,g_P} \Box u_i|(\tilde p_i) \leq C
\end{align}
for sufficiently large $i$. Here all norms and semi-norms are calculated with respect to the product metric $g_P$. Hence, we may let $u_i\rightarrow u$ in at least $C^{2,1}_{\textup{loc}}$ and satisfies 
\begin{align}
\label{new-3}\nabla^{m,g_P}u(p_\infty)=\nabla^{l,g_P}\partial_t u (p_\infty)=0
\end{align}
for all $m\leq k$ and $l\leq k-2$. Denote $\bf b$ and $\bf f$ the base and fiber direction. Then we have 

\begin{claim}\label{fiber-deg-holder}
$|\nabla_{\bf f}^{g_i}\nabla^{j-1,g_i} u_i|_{g_i}\leq C\lambda_i^{k-j+\a}$ on $Q_{(z_i,y_i),i/3}$ for any $j\in \{1,...,k\}$.
\end{claim}
\begin{proof}
[Proof of Claim.] From \cite[claim 3, page 20]{HT18}, for each $x\in \pi_{\mathbb{C}^m}\left(B_{g_i}\left((z_i,y_i),i/3 \right)\right)$ and $|t|\leq i/3$ we have 
\begin{equation}
\begin{split}||\nabla_{\bf f}^{\lambda_i^2 g_Y}\nabla^{j-1,g_i}u_i(t)||_{\infty,\{z\}\times Y,\lambda_i^2g_Y}&\leq C\lambda_i^\a [\nabla^{k,g_i} u_i(t)]_{C^\a(\{z\}\times Y,\lambda_i^2 g_Y)}\\
&\leq C\lambda_i^{k-j+\a} [\nabla^{k,g_i} u_i(t)]_{\a,\a/2,Q_{(z_i,y_i),i/3}}\\
&\leq C\lambda_i^{k-j+\a}.
\end{split}
\end{equation}
Here we have used pullback of \eqref{1} under $F_i$. Since the estimate is uniform independent of $z$ and $t$, result follows. 
\end{proof}




With the equation \eqref{new-1}, \eqref{new-2}, \eqref{new-3} and  Claim \ref{fiber-deg-holder}, the proof of the Claim 3 in \cite[Proposition 3.9]{HT18} can now be carried over by replacing the operator $L^g$ with the heat operator $\Box_g$ and deriving contradiction  using  Proposition \ref{gradient-esti}. 
\end{proof}

Now we are ready to prove Theorem \ref{Sch-esti}.
\begin{proof}
[Proof of Theorem \ref{Sch-esti}.]
The proof is standard using Proposition \ref{sch-prep}. We include it for sake of completeness. Fix $\frac{1}{100}>\e>0$ and obtain $\delta_0(\e)$ and $C_1(\e)$ from Proposition \ref{sch-prep}. Let $\delta=\min\{\frac{1}{100},\delta_0\}$. Let $p,q\in B_{x,\rho}$ and connected by a unique $g_P$ minimal geodesic. Let $s,t\in (-\rho^2,0]$. If $d_{g_P}(p,q)+|s-t|^{1/2}\geq \delta (R-\rho)$, then 
$$\frac{|\nabla^{k,g_P} f(p,t)-\pt_{p,q}^{g_P} \nabla^{k,g_P}f(q,s)|_{g_P}}{\left(d_{g_P}(p,q )+|s-t|^{1/2}\right)^\a}\leq 2\left(\delta (R-\rho) \right)^{-\a}||\nabla^{k,g_P}f||_{\infty,Q_{p,\rho}}.$$

If $d_{g_P}(x,y)+|s-t|^{1/2}<  \delta (R-\rho)$, then apply Proposition \ref{sch-prep} with $Q_{p,R}$ replaced by $Q_{x,R-\rho}$ (here we may need to translate $t$ to 0 if necessary) to show that 
\begin{equation}
\begin{split}
\frac{|\nabla^{k,g_P} f(p,t)-\pt_{p,q}^{g_P} \nabla^{k,g_P}f(q,s)|_{g_P}}{\left(d_{g_P}(p,q)+|s-t|^{1/2}\right)^\a}&\leq \e[\nabla^{k,g_P} f]_{\a,\a/2,Q_{x,R}}+C[\nabla^{k-2,g_P}\Box f]_{\a,\a/2,Q_{x,R}}\\
&\quad +\sum_{j=0}^k C(R-\rho)^{-k+j-\a} ||\nabla^{j,g_P} f||_{\infty,Q_{x,\rho+\delta(R-\rho)}}\\
&\quad + \sum_{j=0}^{k-2} C(R-\rho)^{-k+2+j-\a}||\nabla^{j,g_P}\Box f||_{\infty,Q_{x,\rho+\delta(R-\rho)}}
\end{split}
\end{equation}

Hence in any case, we also have 
\begin{equation}
\begin{split}
[\nabla^{k,g_P} f]_{\a,\a/2,Q_{x,\rho}}&\leq \e[\nabla^{k,g_P} f]_{\a,\a,Q_{x,R}}+C[\nabla^{k-2,g_P}\Box f]_{\a,\a/2,Q_{x,R}}\\
&\quad +\sum_{j=0}^k C(R-\rho)^{-k+j-\a} ||\nabla^{j,g_P} f||_{\infty,Q_{x,\rho+\delta(R-\rho)}}\\
&\quad + \sum_{j=0}^{k-2} C(R-\rho)^{-k+2+j-\a}||\nabla^{j,g_P}\Box  f||_{\infty,Q_{x,\rho+\delta(R-\rho)}}
\end{split}
\end{equation}

By Lemma \ref{interpolation} with $\rho$ and $R$ replaced by $\rho+\delta(R-\rho)$ and $\rho+(\delta+\delta')(R-\rho)$, the last term can be replaced by the following.
\begin{equation}
\begin{split}
&\quad \sum_{j=0}^k C(R-\rho)^{-k+j-\a} ||\nabla^{j,g_P} f||_{\infty,Q_{x,\rho+\delta(R-\rho)}}\\
&\leq \tilde C_k\delta'^{\a}[\nabla^k f]_{\a,\a/2,Q_{x,R}}+C(R-\rho)^{-k-\a} ||f||_{\infty,Q_{x,R}}\\
&\leq \e [\nabla^k f]_{\a,\a/2,Q_{p,R}}+C(R-\rho)^{-k-\a}||f||_{\infty,Q_{x,R}}
\end{split}
\end{equation}
provided that $\delta'$ is sufficiently small. Similarly, 
\begin{equation}
\begin{split}
&\quad \sum_{j=0}^{k-2} C(R-\rho)^{-k+j+2-\a} ||\nabla^{j,g_P} \Box f||_{\infty,Q_{x,\rho+\delta(R-\rho)}}\\
&\leq C[\nabla^{k-2,g_P}\Box f]_{\a,\a/2,Q_{x,R}}+C'(R-\rho)^{-k-\a+2}||\Box f||_{\infty,Q_{x,R}}
\end{split}
\end{equation}

To summarize, we have shown that for all $0\leq \rho<R$,
\begin{equation}
\begin{split}
[\nabla^{k,g_P}f]_{\a,\a/2,Q_{x,\rho}}&\leq 2\e[\nabla^{k,g_P}f]_{\a,\a/2,Q_{x,R}}+C[\nabla^{k-2}\Box f]_{\a,\a/2,Q_{x,R}}\\
&\quad +C(R-\rho)^{-k-\a}||f||_{\infty,Q_{x,R}}\\
&\quad +C(R-\rho)^{-k-\a+2}||\Box f||_{\infty,Q_{x,R}}.
\end{split}
\end{equation}

Since $2\e<1$, this completes the proof by applying \cite[Lemma 3.4]{HT18}.
\end{proof}

\section{Local estimates of \KR flow}
\label{sect:LocalEstimates}
In this section, we will adapt the idea in \cite{HT18} and apply the cylindrical parabolic Schauder estimates to study the higher order regularity of \KR flow on $X$ with semi-ample canonical line bundle $K_X$. Let $(X,\omega_0)$ be a compact \K manifold and $\omega(t)$ be a long-time solution to the normalized \KR flow 
\begin{equation}\label{eq:NKRF} 
\partial_t \omega(t)=-\Ric(\omega(t))-\omega(t),\;\omega(0)=\omega_0.
\end{equation}

The semi-ample condition on $K_X$ induces a Calabi-Yau fibration structure $f : X^{m+n} \to \Sigma^m \subset \CP^N$ with possibly singular fibers. Denote the set of singular set of $\Sigma$ and critical value of $f$ by $S$. By \cite{ST12}, there exists a smooth \K metric $\omega_\Sigma$ on $\Sigma^n\setminus S$ satisfying the generalized K\"ahler-Einstein equation. Fix an open ball $B \subset\subset \Sigma \backslash S$, we may assume $B=B_1=B_{\mathbb{C}^m}(1)$ to be the Euclidean unit ball by rescaling and $\omega_\Sigma=\ddb v$ for some $v\in C^{\infty}(B)$. Throughout this section, we will consider the special case that for some $\e>0$, the regular fibers over $B_{1+\e}$ are biholomorphic to each other so that $f^{-1}(B_{1+\e})$ can be trivialized as a direct product $B_{1+\e}^m \times Y^n$ where $(Y,\omega_Y)$ is a closed \K manifold with $\Ric(\omega_Y)=0$. We will still denote $\omega_\Sigma$ and $v$ to be their pull-back on $B_{1+\e}\times Y$.

The main goal of this section is to prove the following local higher order regularity of $\omega(t)$. Under the above setting, it was already proved by the first author and Zhang \cite{FZ15} that there exists $C>1$ such that for all $t\in [0,+\infty)$, 
\begin{equation}\label{metric-equ}
C^{-1}\omega_P(t)\leq \omega(t)\leq C\omega_P(t),
\end{equation}
where $\omega_P(t)=\omega_{\mathbb{C}^m}+e^{-t}\omega_Y$.

\begin{thm}\label{KRF-local-regularity}
Under the above setting. Denote the reference $g_P(t)=g_{\mathbb{C}^m}+e^{-t}g_Y$, then for all $k\in \mathbb{N}$, there exist $C_k$'s such that for all $t\in [0,+\infty)$, 
$$\sup_{B\times Y}|\nabla^{k,g_P(t)}g(t)|_{g_P(t)}\leq C_k.$$
In particular, given any \K metric $g_X$ on $X$, compact set $\Omega$ away from singular fiber and $k\in \mathbb{N}$, there exist $C(k,\Omega,g_X)$ such that for all $t\in [0,+\infty)$, 
$$\sup_\Omega |\nabla^{k,g_X} g(t) |_{g_X}\leq C(k,\Omega,g_X).$$
\end{thm}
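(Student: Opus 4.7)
The plan is to prove the bounds by induction on $k$, using the parabolic cylindrical Schauder estimate of Theorem \ref{Sch-esti} as the analytical engine and a parabolic blow-up argument to supply the required $L^\infty$ input. The base case $k=0$ is precisely \eqref{metric-equ}. For the inductive step, I would fix a semi-flat reference $\omega_{\textup{ref}}(t) = \omega_\infty + e^{-t}\omega_Y$ on $B \times Y$ with $\omega_\infty = \ddb v$, and write $\omega(t) = \omega_{\textup{ref}}(t) + \ddb \varphi(t)$ for a potential $\varphi$ solving a parabolic complex Monge-Amp\`ere equation inherited from \eqref{eq:NKRF}. Applying Theorem \ref{Sch-esti} to the $\ddb$-exact $2$-form $\eta(t) = \ddb \varphi(t)$, together with inductive control of $\Box \eta$ coming from the Monge-Amp\`ere equation, reduces the problem to controlling $\sup |\nabla^{k, g_P(t)} \eta(t)|_{g_P(t)}$ on parabolic cylinders $Q_{x,R} \subset (B \times Y) \times [0,\infty)$.

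To establish this supremum bound I would argue by contradiction following Hein-Tosatti's blow-up analysis \cite{HT18}, adapted to the flow. Suppose the bound fails; extract $(p_i, t_i)$ and rescaling factors $\lambda_i$ achieving blow-up of an appropriately normalized H\"older seminorm. Perform a parabolic rescaling combined with a spatial dilation in the fiber direction: set $\tilde g_i(s) = \lambda_i^2 \, g(t_i + \lambda_i^{-2} s)$ and pull back by a diffeomorphism adjusting the fiber scale. The rescaled metrics satisfy the K\"ahler-Ricci flow $\partial_s \tilde g_i = -\Ric(\tilde g_i) - \lambda_i^{-2}\tilde g_i$, whose lower-order term disappears in the limit. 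By \eqref{metric-equ} and Theorem \ref{KRF-local-estimates}, one extracts a subsequential pointed limit. Three regimes arise depending on $\lambda_i e^{-t_i/2}$, namely the ratio between the fiber scale at time $t_i$ and the blow-up scale: collapsing fibers give a limit on $\mathbb{C}^{m+n}$ with Euclidean model; balanced scales give a limit on $\mathbb{C}^m \times Y$ with the product model $g_P$; over-expanded fibers degenerate onto $\mathbb{C}^m$. In each case the limit is uniformly equivalent to its model metric, and Ricci-flatness of the limit follows from Song-Tian's global scalar curvature bound combined with the flow identity $\partial_t \log \det g = -R - (m+n)$: after parabolic rescaling, this identity forces the limit volume form to be static, so the limit is a Ricci-flat K\"ahler metric. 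The Liouville theorems (Theorem \ref{Liouville-flat} in the first regime, Theorem \ref{Liouville-product} in the second, a classical Euclidean statement in the third) then force the limit to be parallel with respect to the model, contradicting the normalization of the seminorm.

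I expect the main obstacle to lie in the product-fiber regime, where invoking Theorem \ref{Liouville-product} requires the limit of the rescaled $\ddb\varphi_i$'s to be $d$-cohomologous to $\omega_{\mathbb{C}^m} + \omega_Y$. This is not automatic and demands the $k$-jet subtraction device already used in the proof of Proposition \ref{sch-prep}, together with a careful verification that the modified potentials continue to witness the blow-up in Schauder norm after the subtraction. A secondary delicate point is to upgrade Song-Tian's scalar curvature bound to genuine Ricci-flatness of the rescaled limit rather than mere scalar-flatness, which is where the parabolic Monge-Amp\`ere reformulation $\partial_t \log \det g = -R - (m+n)$ is essential, since it is this identity--and not $R \equiv 0$ alone--that forces the limiting volume form to be static and hence the limit Ricci-flat K\"ahler.
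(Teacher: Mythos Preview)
Your high-level architecture---blow-up on a weighted $k$-th derivative, three regimes according to $\lambda_i e^{-t_i/2}$, Liouville in each---matches the paper, and the use of Song-Tian's scalar bound is correct in the non-collapsed regimes. But there are two genuine gaps.

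\textbf{You have inverted the difficulty of the cases.} The regime you describe as ``over-expanded fibers degenerate onto $\mathbb{C}^m$'' with Liouville being ``a classical Euclidean statement'' is in fact the hardest. In that regime the rescaled reference $g_{P,i}(t)$ collapses, so the local $C^\infty$ estimates of Theorem~\ref{KRF-local-estimates} are unavailable: you only have $C^{k-1,\alpha}$ convergence of $g_i(t)$ from the inductive hypothesis, and you cannot pass the Ricci flow equation to the limit smoothly. Consequently the route ``scalar bound $\Rightarrow$ $R_\infty=0$ $\Rightarrow$ $\Ric_\infty=0$'' is not available. The paper instead (i) uses the parabolic Schauder estimate (your Theorem~\ref{Sch-esti}) \emph{inside this case}, applied to the linearized Monge-Amp\`ere equation at the intermediate $\delta_i^{-1}$ scale, to manufacture one extra derivative so that the contradiction point survives in the limit; and (ii) proves directly that the limit $(1,1)$-form on $\mathbb{C}^m$ satisfies $\omega_\infty^m = c\,\omega_{\mathbb{C}^m}^m$ by an integration-by-parts and fiber-counting argument using the $C^0$ collapsing results of \cite{TWY}. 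Only then does Theorem~\ref{Liouville-flat} apply. Your proposal puts the Schauder estimate at the top level as a global reduction, but $\Box\eta$ is nonlinear in $\nabla^2\eta$, so ``inductive control of $\Box\eta$'' is circular without the smallness of $h_s^{i\bar j}-g_P^{i\bar j}$ that the specific rescaling in this case provides.

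\textbf{The $d$-cohomologous issue is not where you locate it.} In the balanced regime, the limit being $d$-cohomologous to $\omega_{\mathbb{C}^m}+\omega_Y$ follows from the setup once one first pulls back by the biholomorphism $\Lambda$ of \cite[Proposition~3.1]{Hein19} so that $\Lambda^*\omega_0 = \omega_Y + \ddb u$; this step is absent from your sketch but is what makes \eqref{local-MA-KRF-1} hold. No $k$-jet subtraction is needed here---that device lives entirely inside the proof of the Schauder estimate itself. Finally, your proposed passage ``static volume form $\Rightarrow$ Ricci-flat'' is not valid on its own: you still need $(\partial_t-\Delta)R = 2|\Ric|^2$ on the limit (as the paper uses) to conclude $\Ric\equiv 0$ from $R\equiv 0$.
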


Now let us formulate the \KR flow setting. First we need to modify the set-up a bit in order apply the Schauder estimates. By \cite[Proposition 3.1]{Hein19}, there exists a biholomorphism $\Lambda$ of $B\times Y$ such that $\Lambda^*\omega_0=\omega_Y+\ddb u$ for some smooth function $u$. Note that \cite[Proposition 3.1]{Hein19} is stated with $B=\mathbb{C}^m$ but the proof also applies on Euclidean Ball $B$. Furthermore, $\Lambda$ is in form of $\Lambda(z,y)=(z,y+\sigma(z))$ for some holomorphic function $\sigma$ from $B$ to the space of $g_Y$-parallel $(1,0)$ vector fields on $Y$, we refer readers to \cite[(1.1)]{Hein19} for detailed exposition.

For a given normalized \KR flow $\omega(t)$, the pull-back of $\omega(t)$ by $\Lambda$ is also a solution to the normalized \KR flow on $B\times Y\times [0,+\infty)$. Let $\varphi$ be the solution to the following ordinary differential equation:
\begin{equation}\label{local-MA}
\left\{
\begin{array}{ll}
\displaystyle\frac{\partial \varphi}{\partial t}&=\displaystyle\log \frac{\left(\Lambda^*\omega(t)\right)^{m+n}}{\omega_P(t)^{m+n}}-\varphi-v;\\
\varphi(0)&=u
\end{array}
\right.
\end{equation}

By taking $\ddb$ on both sides and using the normalized K\'ahler-Ricci flow equation, one can easily show that 
\begin{equation}\label{local-MA-KRF-1}
\Lambda^* \omega(t)=e^{-t}\omega_Y+(1-e^{-t})\omega_\Sigma +\ddb \varphi.
\end{equation}
And hence \eqref{local-MA} is a local parabolic Monge-Amp\`ere equation. Note that we construct local solution to the parabolic Monge-Amp\`ere equation using the existence of \KR flow instead of standard PDE theory because $\omega_P(t)$ and $v$ are only locally defined. Note that $\Lambda^*\omega(t)$ is $d$-cohomologous on $B \times Y$ to $\omega_P(t)$ for each $t\geq 0$.

Theorem \ref{KRF-local-regularity} follows from the following proposition

\begin{prop}\label{KRF-local-regularity-pre}
Under the assumptions made in Theorem \ref{KRF-local-regularity}, for any $k\in \mathbb{N}$, there exists $C(B,k,m,n,Y,\omega_0)$ such that for all $t\in [0,+\infty)$,
$$\sup_{B\times Y}|\nabla^{k,g_P(t)}(\Lambda^*g(t))|_{g_P(t)}\leq C.$$
\end{prop}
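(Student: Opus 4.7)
We prove Proposition \ref{KRF-local-regularity-pre} by induction on $k$, combining the cylindrical parabolic Schauder estimate of Theorem \ref{Sch-esti} with a blow-up/contradiction argument using the Liouville theorems (Theorems \ref{Liouville-flat} and \ref{Liouville-product}), in the spirit of the elliptic argument of Hein--Tosatti \cite{HT18} adapted to the normalized \KR flow. The first move is to set $\eta(t):=\Lambda^*\omega(t)-\omega_P(t)$; using $\omega_{\mathbb{C}^m}=\ddb(|z|^2/2)$ and $\omega_\Sigma=\ddb v$, equation \eqref{local-MA-KRF-1} yields $\eta(t)=\ddb\psi(t)$ with $\psi(t)=(1-e^{-t})v-|z|^2/2+\varphi(t)$, so $\eta(t)$ is $\ddb$-exact on $B\times Y$ for every $t$ and Theorem \ref{Sch-esti} is applicable. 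The $C^0$ metric equivalence \eqref{metric-equ} from \cite{FZ15} gives $\|\eta(t)\|_{\infty,g_P(t)}\le C_0$ uniformly, which handles the $k=0$ base case.

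Given the inductive $L^\infty$ bounds on $\nabla^{j,g_P(t)}\eta$ for $j\le k-1$, together with a uniform $L^\infty$ bound on $\nabla^{k,g_P(t)}\eta$ itself (established by the contradiction argument below), Theorem \ref{Sch-esti} applied to the $\ddb$-exact form $\eta$ on parabolic $g_P(t)$-cylinders of a definite size upgrades this to a uniform $C^{k,\alpha}$-bound. The auxiliary quantity $\Box_{g_P(t)}\eta$ appearing on the right-hand side of Schauder is controlled using the Monge--Amp\`ere equation \eqref{local-MA}: taking $\ddb$ of it and invoking $\Ric(\omega_P)=0$ expresses $\Box_{g_P}\psi$ as a smooth function of $\eta$, its first-order derivatives, and the fixed smooth data $v$, so the inductive hypothesis supplies the required $C^{k-2,\alpha}$ and $L^\infty$ control of $\Box\eta$. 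Iteration in $k$ then yields all higher-order bounds.

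The heart of the argument is therefore the uniform $L^\infty$ bound $\|\nabla^{k,g_P(t)}\eta\|_\infty\le C_k$. Suppose toward contradiction that $M_i:=|\nabla^{k,g_P(t_i)}\eta(t_i)|_{g_P(t_i)}(x_i)\to\infty$ along some sequence with $t_i\to\infty$, and parabolically rescale at $(x_i,t_i)$ with spatial factor $\lambda_i$ and time factor $\lambda_i^2$, chosen so that the $k$-th order parabolic H\"older semi-norm of the rescaled $\eta_i$ is of order one at the origin. Exactly as in the proof of Proposition \ref{sch-prep}, the analysis splits into three cases according to the ratio $\mu_i:=\lambda_i e^{-t_i/2}$ of the spatial rescaling to the fiber scale. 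When $\mu_i\to\infty$ the pointed rescalings Cheeger--Gromov converge to $(\mathbb{C}^{m+n},g_{\mathbb{C}^{m+n}})$; when $\mu_i\to c\in(0,\infty)$ they converge smoothly to $(\mathbb{C}^m\times Y,g_{\mathbb{C}^m}+c^2 g_Y)$; and when $\mu_i\to 0$ they Gromov--Hausdorff converge to $(\mathbb{C}^m,g_{\mathbb{C}^m})$, the collapsing case being handled via the fiber-direction H\"older comparison of Claim \ref{fiber-deg-holder} together with Corollary \ref{gradient-esti-LV}. In the first two cases the rescaled flow $\tilde g_i(s):=\lambda_i^2 g(t_i+\lambda_i^{-2}s)$ satisfies $\partial_s\tilde g_i=-\Ric(\tilde g_i)-\lambda_i^{-2}\tilde g_i$, so the limit $g_\infty$ solves the unnormalized \KR flow $\partial_s g_\infty=-\Ric(g_\infty)$. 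Song--Tian's uniform global scalar curvature bound \cite{ST16} gives $R(\tilde g_i)=\lambda_i^{-2}R(g)\to 0$, hence $R(g_\infty)\equiv 0$, and the parabolic identity $\partial_s R=\Delta R+|\Ric|^2$ then forces $\Ric(g_\infty)\equiv 0$. The limit is therefore a complete Ricci-flat \K metric uniformly equivalent to and $d$-cohomologous to $g_{\mathbb{C}^{m+n}}$, respectively $g_{\mathbb{C}^m}+c^2 g_Y$, so Theorem \ref{Liouville-flat} or Theorem \ref{Liouville-product} forces $g_\infty$ to be parallel, contradicting the nontriviality of the $k$-th order rescaled semi-norm at the origin.

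The main obstacle, compared with the elliptic setting of \cite{HT18} where all metrics are Ricci-flat by hypothesis, is precisely the extraction of Ricci-flatness of the blow-up limit. This is where Song--Tian's uniform scalar-curvature bound, the parabolic vanishing $\lambda_i^{-2}\to 0$ of the normalization term, and the parabolic scalar-curvature evolution of the unnormalized \KR flow work together to reduce the problem to an ambient Liouville-type statement. The collapsing case $\mu_i\to 0$ is technically the most delicate and requires quantitative comparison between fiber-direction derivatives and the shrinking fiber scale $\mu_i$, handled by the fiber-direction estimate of Claim \ref{fiber-deg-holder} exactly as in Case~3 of the proof of Proposition \ref{sch-prep}.
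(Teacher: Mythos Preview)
Your overall blow-up framework and your treatment of Cases~1 and~2 (non-collapsed limits, extracting Ricci-flatness from Song--Tian's scalar curvature bound via $\partial_t R=\Delta R+|\Ric|^2$, then invoking Theorems~\ref{Liouville-flat} and~\ref{Liouville-product}) match the paper's proof. The genuine gap is in Case~3, the collapsing case $\mu_i\to 0$.

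You propose to handle Case~3 ``via the fiber-direction H\"older comparison of Claim~\ref{fiber-deg-holder} together with Corollary~\ref{gradient-esti-LV}, exactly as in Case~3 of the proof of Proposition~\ref{sch-prep}.'' That argument, however, is for \emph{linear} heat equation solutions: the limiting object satisfies a linear constant-coefficient equation, so the ancient Liouville statement of Corollary~\ref{gradient-esti-LV} applies directly. In the nonlinear \KR flow setting the collapsed limit $\omega_\infty$ is only, a~priori, a $C^{k-1,\alpha}_{\textup{loc}}$ \K metric on $\mathbb{C}^m$ uniformly equivalent to $\omega_{\mathbb{C}^m}$; it satisfies no evident linear equation, and neither Claim~\ref{fiber-deg-holder} nor Corollary~\ref{gradient-esti-LV} says anything about it. The paper instead proves two substantial claims you have not addressed: (i) a $C^{k+1,\alpha}$ regularity improvement near the central fiber, obtained by applying Theorem~\ref{Sch-esti} to the \emph{linearized} Monge--Amp\`ere equation for $\ddb\xi$ and absorbing the quasilinear terms using the smallness $\|\ddb\xi\|_\infty\le\varepsilon$ (which in turn relies on the $C^0$ convergence results of Tosatti--Weinkove--Yang \cite{TWY}); and (ii) the identity $\omega_\infty^m=c\,\omega_{\mathbb{C}^m}^m$ on $\mathbb{C}^m$, proved by a delicate volume-form expansion with fiber integration, Yau's $C^0$ estimate on fibers, and the convergence $\dot\varphi+\varphi\to 0$ from \cite{TWY}. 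Only then does Theorem~\ref{Liouville-flat} yield the contradiction. None of this is captured by the linear-heat-equation argument you cite.

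A secondary issue: your proposed direct bootstrap via Theorem~\ref{Sch-esti} (``upgrade $L^\infty$ to $C^{k,\alpha}$'') is circular as written. Since $\Box_{g_P}\eta=\ddb(\Box_{g_P}\psi)$ and $\Box_{g_P}\psi$ depends on $\eta$ through $\log\det(g_P+\eta)$, the term $\nabla^{k-2,g_P}\Box_{g_P}\eta$ already contains $\nabla^{k,g_P}\eta$, so $[\nabla^{k-2}\Box\eta]_{\alpha}$ cannot be bounded by the inductive hypothesis alone. The paper does not use Theorem~\ref{Sch-esti} in this way; it is invoked only inside the collapsing case, precisely where rescaling makes the quasilinear coefficients small enough to absorb.
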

Theorem \ref{KRF-local-regularity} follows quickly from Proposition \ref{KRF-local-regularity-pre}.
\begin{proof}
[Proof of Theorem \ref{KRF-local-regularity}]
The argument is almost identical to \cite[Corollary 1.3]{HT18}, we include the proof for reader's convenience. By Proposition \ref{KRF-local-regularity-pre}, for all $k\in \mathbb{N}$, there exist $C_k$'s such that for all $t\in [0,+\infty)$, 
$$\sup_{B\times Y}|\nabla^{k,g_P(t)}(\Lambda^*g(t))|_{g_P(t)}\leq C_k.$$

When $k=1$, due to \eqref{metric-equ}, it suffices to show that $|\nabla^{g_P(t)}-\nabla^{\Lambda^*g_P(t)}|_{g_P(t)}\leq C$ on $B\times Y$ for some $C$ independent of $t\rightarrow +\infty$. For each $t\in [0,+\infty)$, by rescaling and pulling back $g_P(t)$ and $\Lambda^*g_P(t)$ under the diffeomorphism given by $\Phi_t(z,y)=(e^{-t/2}z,y)$, it is equivalent to show $|\nabla^{g_P}-\nabla^{\hat g_P}|_{g_P}\leq Ce^{-t/2}$ for all $t>0$ where $g_P=g_{\mathbb{C}^m}+g_Y$ and $\hat g_P=\tilde\Phi_t^*g_P$ where $\tilde \Phi_t(z,y)=(z,y+\sigma(e^{-t/2}z))$. The estimates follows immediately as $|\nabla^{g_P}\hat g_P|_{g_P}\leq Ce^{-t}$ due to spatial stretching from pull back and the fact that $\sigma$ takes values in the $g_Y$ parallel vector fields on $Y$. The argument for $k>1$, note that
$$\nabla^{k}-\tilde \nabla^{k}=\nabla * (\nabla^{k-1}-\tilde\nabla^{k-1})+ (\nabla-\tilde\nabla)*\tilde \nabla^{k-1}.$$

By similar argument and induction, this is not difficult to prove that $\nabla^{k,g_P(t)}$ and $\nabla^{k,\Lambda^*g_P(t)}$ can be interchanged with a harmless error. This will complete the proof.
\end{proof}

It remains to prove Proposition \ref{KRF-local-regularity-pre}.
\begin{proof}
[Proof of Proposition \ref{KRF-local-regularity-pre}]
Let us fix some notations before we begin the proof. We will use $\omega_P(t)=\omega_{\mathbb{C}^m}+e^{-t}\omega_Y$ and $\omega_P=\omega_{\mathbb{C}^m}+\omega_Y$ to denote the product reference metric and product metric on $B\times Y$ respectively. Note that the connection induced by $\omega_P$ and $\omega_P(t)$ are identical due to the product structure. We will also denote $B_r$ to be ball of radius $r$ in $\mathbb{C}^m$.

For each $k\geq 0$, define a function $\mu_k(x,t)$ by 
\begin{equation}
\mu_k(x,t)=d_{g_P(t)}(x,\partial B\times Y)^k |\nabla^{k,g_P(t)} (\Lambda^*g(t))|_{g_P(t)}(x).
\end{equation}
It suffices to prove that each $\mu_k$ is uniformly bounded on $B\times Y\times [0,+\infty)$ which in turn implies uniform boundedness of $|\nabla^{k,g_P(t)}\Lambda^*g(t)|$ on $B_{1/2}\times Y\times [0,+\infty)$. And hence the main result will follow by appropriate rescaling or covering argument. When $k=0$, the boundedness of $\mu_0(x,t)$ has already been done using \eqref{metric-equ}. The main goal is to improve the regularity using \eqref{metric-equ}. 

Let $k\geq 1$. Suppose on the contrary that 
\begin{equation}\label{induction-assumption}
\sup_{B\times Y\times [0,+\infty)} \sum_{j=0}^{k-1}\mu_j(x,t) \leq C
\end{equation}
 for some $C>0$ and $\mu_k$ is not bounded uniformly as $t\rightarrow +\infty$. Then there exist sequences $x_i\in B\times Y$ and $t_i \rightarrow +\infty$ such that 
\begin{equation}
\label{mu-max}
\mu_k(x_i,t_i)=\max_{B\times Y\times [0,t_i]}\mu_k(x,t)\rightarrow +\infty.
\end{equation}

Define the rescaling factor by:
\[K_i := \abs{\nabla^{k,g_P(t_i)}\Lambda^*\left(g(t_i)\right)}_{g_P(t_i)}^{\frac{1}{k}}(x_i).\]
To see that $K_i \to +\infty$, we recall that $d_{g_P(t_i)}\left(x, \partial B \times Y)\right)$ is bounded above from $g_P(t)\leq g_P$, so we have
\[CK_i \geq d_{g_P(t_i)}\left(x_i, \partial B \times Y)\right)K_i = \mu_k(x_i,t_i)^{\frac{1}{k}}\]
which implies $K_i \to +\infty$. 

Define the biholomorphism $\Psi_i:B_{K_i}\times Y\rightarrow B\times Y$ by $$\Psi_i(z,y)=(K_i^{-1}z,y)$$ and denote $\hat x_i=\Psi_i^{-1}(x_i)$. Consider the parabolic rescaled metric
\begin{equation}
\label{rescaling-metric}
\left\{
\begin{array}{rl}
g_i(t)&=K_i^2\Psi_i^* \Lambda^*g(t_i+K_i^{-2}t),\\
g_{P,i}(t)&=K_i^2\Psi_i^*  g_P(t_i+K_i^{-2}t)=g_{\mathbb{C}^m}+K_i^2 e^{-t_i-K_i^{-2}t}g_Y
\end{array}
\right.
\end{equation}
 on $B_{K_i}\times Y\times (-K_i^2t_i,0]$ which satisfies   
 \begin{equation}\label{approximated-KRF}
 \begin{split}
\D{}{t}g_i(t) & = -\Ric(g_i(t)) - K_i^{-2}g_i(t).
 \end{split}
 \end{equation}

 By \eqref{metric-equ}, we still have 
 \begin{align}\label{metric-equ-1}
 C^{-1} g_{P,i}(t)\leq g_i(t)\leq Cg_{P,i}(t)
 \end{align}
 on $B\times Y\times [0,+\infty)$. On the other hand, for any $\hat{x} \in B_{K_i} \times Y$, one can easily verify that:
\begin{equation}
\label{eq:rescaled_mu}
\mu_k\big(\Psi_i(\hat{x}), t_i + K_i^{-2}t\big) = d_{g_{P,i}(t)}\big(\hat{x}, \partial B_{K_i} \times Y\big)^k \abs{\nabla^{k,g_{P,i}(t)}g_i(t)}_{g_{P,i}(t)}(\hat{x}).
\end{equation}

In particular, by \eqref{mu-max} 
\begin{equation}\label{completeness}
d_{g_{P,i}(0)}\big(\hat{x_i}, \partial B_{K_i} \times Y\big)=\mu_k(x_i,t_i)^{1/k}\rightarrow +\infty.
\end{equation}
where we have used 
\begin{equation}\label{in-parallel}
\abs{\nabla^{k,g_{P,i}(0)}g_i(0)}_{g_{P,i}(0)}(\hat{x_i})=1.
\end{equation}

Hence, the pointed limit with base point $\hat x_i$ will be complete. Moreover, since $\mu_k(\cdot,t_i)$ attains its maximum at $(x_i,t_i)$ and $g_{P,i}(t)$ is decreasing with respect to $t$, we can use triangle inequality to deduce that for all $\hat x\in B_{K_i}\times Y$, $t\in [-K_i^2t_i,0]$,
\begin{equation}\label{lma:derivative_rescaled}
\left\{
\begin{array}{rl}
|\nabla^{k,g_{P,i}(t)}g_i(t)|_{g_{P,i}(t)}(\hat x) &\leq \displaystyle \left(1-\frac{d_{g_{P,i}(0)}(\hat x,\hat x_i)}{d_{g_{P,i}(0)}(\hat x_i,\partial B_{K_i}\times Y)}\right)^{-k};\\
\displaystyle\sup_{B_{K_i/2}\times Y\times [-K_i^2t_i,0]}|\nabla^{j,g_{P,i}(t)}g_i(t)|_{g_{P,i}(t)}&\leq CK_i^{-j} \hfill \text{
for all } 0<j<k.
\end{array}
\right.
\end{equation}

The second inequality follows from \eqref{induction-assumption}. In particular, this gives the regularity of $g_i(t)$ with respect to the reference metric $g_{P,i}(t)$ which is possibly collapsing. Next, we consider the pointed limit of $(B_{K_i}\times Y,g_{P,i}(0),\hat x_i)$. By translation in $\mathbb{C}^m$, we may assume $\hat x_i=(0,y_i)\in \mathbb{C}^m\times Y$. Next we need to compare the rescaling \eqref{rescaling-metric} with the original collapsing speed. There are three possibilities, either $\delta_i \rightarrow+\infty$, $\delta_i\rightarrow \delta_0>0$ or $\delta_i\rightarrow 0$ where $\delta_i=K_ie^{-t_i/2}$. \\

{\bf CASE 1. $\delta_i\rightarrow +\infty$:} In this case, $g_{P,i}(t)$ does not converge as the $g_Y$ coefficient blows up. However, we can consider a local coordinate chart $(\Delta; y^1, \cdots, y^1)$ near the limit point
\[y_\infty := \lim_{i\to+\infty}\pi_Y(\hat{x}_i),\]
which exists after passing to a subsequence. We may assume $\Delta$ is the unit ball in $\C^n$, and denote by $\Delta_R$ the open ball with radius $R$ in $\C^n$. We further define a biholomorphism $\Phi_i : B_{K_i} \times \Delta_{K_i e^{-t_i/2}} \to B_{K_i} \times \Delta$ as
\[\Phi_i(z, y) = (z, \delta_i^{-1}y).\]
Express $g_Y$ in terms of local coordinates:
\[g_Y (y^1 \cdots, y^n)= 2\text{Re}\Big((g_Y)_{k\bar{l}}(y^1,\cdots,y^n)\;dy^k \otimes d\bar{y}^l\Big).\]
With \eqref{rescaling-metric}, this shows
\begin{align*}
& \big(\Phi_i^*{g}_{P,i}(t)\big)(z, y^1,\cdots,y^n)\\
& = g_{\C^m}(z) + e^{-K_i^{-2}t}\cdot 2\text{Re}\Big((g_Y)_{k\bar{l}}(\delta_i^{-1}(y^1,\cdots,y^n))\; dy^k \otimes d\bar{y}^l\Big)\\
& \to g_{\C^m}(z) + 2\text{Re}\Big((g_Y)_{k\bar{l}}(0,\cdots,0)\;dy^k \otimes d\bar{y}^l\Big) \;\; \text{ as $i \to +\infty$}.
\end{align*}
Note that we have used the fact that $\delta_i\rightarrow +\infty$.
In other words the pull-back metric $\Phi_i^*{g}_{P,i}(t)$ converges to the Euclidean metric, and so on compact subsets of $(\C^m \times Y) \times [0, \infty)$, we may assume $\Phi_i^*{g}_{P,i}(t)$ is uniformly equivalent to the Euclidean metric for large $i$.

Recall that from \eqref{metric-equ-1}, $\Phi_i^*{g}_{P,i}(t)$ and $\Phi_i^*g_i(t)$ are uniformly equivalent, and furthermore by pulling back \eqref{approximated-KRF}, $\Phi_i^*g_i(t)$ satisfies an ``approximated'' Ricci flow equation:
\[\D{}{t}\Phi_i^*g_i(t) = -\Ric(\Phi_i^*g_i(t)) - K_i^{-2}\Phi_i^*g_i(t).\]
Hence, by Theorem \ref{KRF-local-estimates}, we have for any $k \geq 1$ and any compact subset $\Omega \times [-\alpha, 0] \subset\subset \C^m \times \C^n \times (-\infty, 0]$, there exists a constant $C(k, \Omega)$ such that
\[\abs{\nabla^{k, \Phi_i^*g_{P,i}(t)}\big(\Phi_i^*g_i(t)\big)}_{\Phi_i^*g_{P,i}(t)} \leq C(k,\Omega) \;\; \text{ on } \Omega \times [-\alpha, 0]\]
for sufficiently large $i$ such that $(B_{K_i} \times Y) \times [-K_i^2t_i, 0] \supset \Omega \times [-\alpha,0]$.

With \eqref{completeness}, we then conclude that the following pointed manifold
\[\Big(B_{K_i} \times \Delta_{K_ie^{-t_i/2}} \times [-K_i^2t_i, 0], \; \Phi^*_i g_i(t), \;\Phi_i^{-1}\hat{x}_i\Big)\]
converges uniformly in $C^\infty$-Cheeger-Gromov sense to a complete limit space 
\[\Big(\C^m \times \C^n \times (-\infty, 0], \; g_\infty(t), \; \hat{x}_\infty\Big).\]

As $\Phi_i^*g_i(t)$ and $\Phi_i^*{g}_{P,i}(t)$ are all uniformly equivalent to the Euclidean metric (independent of both $t$ and $i$), the limit metric $g_\infty(t)$ is also uniformly equivalent to the Euclidean metric. Clearly, $g_\infty(t)$ satisfies 
\[\D{}{t}g_\infty(t) = -\Ric(g_\infty(t)), \; t \in (-\infty, 0].\]
By \cite{ST16}, the scalar curvature of the original solution $g(t)$ to \eqref{eq:NKRF} is always uniformly bounded (regardless of whether there are singular fibers and of the topological type of the fibers). This shows
\[\sup_{\left(B_{K_i} \times \Delta_{K_ie^{-t_i/2}}\right) \times (-K_i^2t_i, 0]}\abs{R\Big(\Phi_i^*K_i^2\Psi_i^*\Lambda^*g(t_i + K_i^{-2}t)\Big)} \leq \frac{C}{K_i^2}\]
and by letting $i \to +\infty$, the limit metric $g_{\infty}(t)$ is scalar flat and hence Ricci flat using
\[\heat R=|\Ric|^2.\]

As $g_\infty(0)$ is uniformly equivalent to the Euclidean metric on $\C^m \times \C^n$, it is parallel with respect to the Euclidean metric by Theorem \ref{Liouville-flat}. In particular, it shows for any $k \geq 1$, we have
\[\abs{\nabla^{k, g_{\C^{m+n}}}g_\infty(0)}_{g_{\C^{m+n}}} \equiv 0 \;\; \text{ on } \C^m \times \C^n.\]

On the other hand, by pulling back \eqref{in-parallel} under $\Phi_i$ and let $i\rightarrow +\infty$, we  have
\[\abs{\nabla^{k, g_{\C^{m+n}}}g_\infty(0)}_{g_{\C^{m+n}}}(\hat{x}_\infty) = 1\]
which is impossible.\\

{\bf CASE 2. $\delta_i\rightarrow \delta_0>0$:} We may assume $\delta_0=1$ by rescaling. Clearly, ${g}_{P,i}(t)$ converges to $g_{\C^m} + g_Y$ on $\C^m \times Y\times (-\infty,0]$ in $C^\infty_{\textup{loc}}$. With the Liouville's Theorem of K\"ahler Ricci-flat metrics on $\C^m \times Y$, Cases (1) and (2) are similar. The difference between Cases (1) and (2) is that in the later case we do not need to consider the biholomoprhism $\Phi_i$ to blow up $Y$ locally around $\hat{x}_i$.

Similar to Case (1), $g_i(t)$ satisfies an approximated Ricci flow equation
\[\partial_tg_i(t) = -\Ric(g_i(t)) - K_i^{-2}g_i(t).\]

By \eqref{metric-equ-1} and Theorem \ref{KRF-local-estimates}, one also has the local $C^k$-estimates (for any $k \geq 1$) for $g_i(t)$ with respect to $g_{\C^m} + g_Y$, and so the pointed space
\[\Big(B_{K_i} \times Y \times [-K_i^2t_i, 0], \; g_i(t), \; \hat{x}_i\Big)\]
converges in $C^\infty$-Cheeger-Gromov sense to a complete limit space
\[\Big(\C^m \times Y \times (-\infty, 0], \; g_\infty(t), \; \hat{x}_\infty\Big)\]
with $g_\infty(t)=g_\infty$ being a Ricci-flat K\"ahler metric for any $t \in (-\infty, 0]$. As $g_\infty(0)$ is uniformly equivalent to $g_{\C^m} + g_Y$ on $\C^m \times Y$, by Theorem \ref{Liouville-product} 
it is parallel with respect to $g_{\C^m} + g_Y$, but it contradicts to the fact that
\[\abs{\nabla^{k,{g}_{P,i}(0)}g_i(0)}_{{g}_{P,i}(0)}(\hat{x}_i) = 1\;\; \text{ for any $i$}.\]
Hence, this case is ruled out.\\

{\bf CASE 3. $\delta_i\rightarrow 0$:} 
Since $g_{P,i}(t)$ is product metric, we have $\nabla^{g_{P,i}(t)}=\nabla^{g_P}$. Using the fact that $g_{P,i}(t)\leq g_{P}$ for all $t<0$, \eqref{lma:derivative_rescaled}, \eqref{completeness} and \eqref{metric-equ-1} imply that for any $R>0$, there exists $C(R)$ such that for all $i\in \mathbb{N}$, $t\in [-K_i^2t_i,0]$, 
$$\sup_{B_R\times Y}\sum_{j=0}^k|\nabla^{j,g_P} g_i(t)|_{g_P}\leq C(R).$$
Hence by Ascoli-Aezel\`a Theorem, for each $t\in (-\infty,0]$, $g_i(t)$ converges in $C_{\textup{loc}}^{k-1,\a}$ to a $C_{\textup{loc}}^{k-1,\a}$ tensor $g_\infty(t)$ on $\mathbb{C}^m\times Y$ which is pullback of a $C^{k-1,\a}_{\textup{loc}}$ \K metric on $\mathbb{C}^m$ uniformly equivalent to $g_{\mathbb{C}^m}$ independent of $t$ using \eqref{metric-equ-1}. When $k=1$, $g_\infty(t)$ is \K in the sense that it is weakly closed. The main idea is to show that $g_\infty(t)$ satisfies $\omega_\infty(t)^m=c \omega_{\mathbb{C}^m}^m$ for some constant $c>0$ independent of $t$ and hence constant by Theorem \ref{Liouville-flat} which contradicts with \eqref{in-parallel}. First, we need a  slightly better regularity of $g_i(t)$ with respect to $g_{P,i}(t)$.\\

\noindent{\bf Claim 1.}
For all $\a\in (0,1)$, there exists $\e>0$ and $C>0$ such that for all $i\in \mathbb{N}$,
\begin{equation}
\label{im-regularity}[\nabla^{k+1, g_{P}}g_i(t)]_{\a,\a/2,B_\e\times Y\times (-\e^2,0],g_{P,i}(0)}\leq C.
\end{equation}
\begin{proof}
Let $\e>0$ be a constant to be determined and we will denote constants depending on $\e$ by $C_\e$. Let $\Phi_i:B_{\delta_i^{-1}}\times Y\rightarrow B\times Y$ be a biholomorphism given by  $\Phi_i(z,y)=(\delta_i z,y)$. Consider the parabolic rescaled metrics,
$$\eta_i(t)=\delta_i^{-2}\Phi_i^* \omega_i(\delta_i^{2}t) \quad\text{and}\quad  \eta_{P,i}(t)=\delta_i^{-2}\Phi_i^*  \omega_{P,i}(\delta_i^{2}t)$$
on $B_{\delta_i^{-1}}\times Y\times [-t_i e^{t_i},0]$. For convenience, we could write 
$$\eta_i(t)=e^{t_i}\Upsilon_i^* \Lambda^*\omega(t_i+e^{-t_i}t) \quad\text{and}\quad  \eta_{P,i}(t)=e^{t_i}\Upsilon_i^*\omega_P(t_i+e^{-t_i}t)$$
where $\Upsilon_i:B_{t_i/2}\times Y\rightarrow B\times Y$ is a biholomorphism given by $\Upsilon_i(z,y)=(e^{-t_i/2}z,y)$. Moreover, $\eta_i(t)$ is an approximated solution to the \KR flow:
\begin{equation}\label{almost-KRF}
\partial_t \eta_i(t)=-\Ric(\eta_i(t))-e^{-t_i} \eta_i(t).
\end{equation}

We may also write $\eta_{P,i}(t)=\omega_{\mathbb{C}^m}+\exp(-e^{-t_i}t)\omega_Y$ and $\eta_{P,i}(0)\equiv g_P=g_{\mathbb{C}^m}+g_Y$. Moreover,  \eqref{lma:derivative_rescaled} implies that for all $x\in B_{\delta_i^{-1}}\times Y $, $t\in [-t_ie^{t_i},0]$,
\begin{equation}\label{bootready-old}
\left\{
\begin{array}{ll}
C^{-1}\eta_{P,i}(t)\leq \eta_i(t)\leq C\eta_{P,i}(t);\\
|\nabla^{k,\eta_{P,i}(t)}\eta_i(t)|_{\eta_{P,i}(t)}\leq C\delta^k\\
|\nabla^{j,\eta_{P,i}(t)}\eta_i(t)|_{\eta_{P,i}(t)} \leq Ce^{-jt_i/2}\;\;\;\hfill \text{for all}\; j\in \{1,\cdots,k-1\}
\end{array}
\right.
\end{equation}
When $k=1$, the third inequality is an empty statement. Since $\eta_{P,i}(t)$ induces the same connection as $g_P$ and $\eta_{P,i}(t)$ is uniformly equivalent to $g_P$ on $B_{\delta_i^{-1}}\times Y\times [-\delta_i^{-2},0]$ independent of $i\rightarrow +\infty$, \eqref{bootready-old} can be replaced by
\begin{equation}\label{bootready}
\left\{
\begin{array}{ll}
C^{-1}\eta_{P,i}(t)\leq \eta_i(t)\leq C\eta_{P,i}(t);\\
|\nabla^{k,g_P}\eta_i(t)|_{g_P}\leq C\delta^k\\
|\nabla^{j,g_P}\eta_i(t)|_{g_P} \leq Ce^{-jt_i/2}\;\;\;\hfill \text{for all}\; j\in \{1,\cdots,k-1\}
\end{array}
\right.
\end{equation}
on $B_{\delta_i^{-1}}\times Y\times [-\delta_i^{-2},0]$. For notational convenience, we may assume $$B_{\delta_i^{-1}}\times Y\times [-\delta_i^{-2},0]=Q_{\delta_i^{-1}}$$ 
where $Q_{r}=B_{g_P}(\hat x,r)\times [-r^2,0]$ for some $\hat x\in \mathbb{C}^m\times Y$ (independent of $i$) since $\delta_i\rightarrow 0$. Now we are going to use the parabolic Monge-Ampere equation for $\eta_i(t)$ to improve the regularity. From \eqref{local-MA} and \eqref{local-MA-KRF-1}, for $s=t_i+e^{-t_i}t$ we have  \begin{equation}
\begin{split}
\eta_i^{m+n}(t)&=e^{(m+n)t_i}\Upsilon_i^*\Lambda^*\left( \omega(s)^{m+n}\right)\\
&=e^{(m+n)t_i}\Upsilon_i^*\left( e^{\dot\varphi(s)+\varphi(s)+v}\omega_{P}(s)^{m+n} \right)\\
&= e^{\Upsilon_i^*\dot\varphi(s)+\Upsilon_i^*\varphi(s)+\Upsilon_i^* v-ne^{-t_i}t} \tilde \omega_{P}^{m+n}.
 \end{split}
\end{equation}
where $$\tilde \omega_{P}(z,y)=\omega_{\mathbb{C}^m}(e^{-t_i/2}z)+\omega_Y(y).$$

To simplify the notation, we define 
$$ \phi_i(x,t)=e^{t_i}\varphi(\Upsilon_i(x),t_i+e^{-t_i}t)$$ 
so that 
\begin{equation}\label{local-eta-equ}
\left\{
\begin{array}{ll}
\displaystyle \log\frac{\eta_i^{m+n}(t)}{ \tilde \omega_{P}^{m+n}}=\dot\phi_i+e^{-t_i}\phi_i+\Upsilon_i^*v-ne^{-t_i}t\\\\
\eta_i(t)=\hat\eta_i(t)+\ddb \phi_i 
\end{array}
\right.
\end{equation}
where 
\begin{align}\label{local-eta}
\hat\eta_i(t)=e^{t_i}\Upsilon_i^* \left[ (1-e^{-s})\omega_{\Sigma}+e^{-s}\omega_{Y}\right].
\end{align}
Noted that 
$\hat\eta_i(t)\rightarrow \omega_P=\omega_{\mathbb{C}^m}+\omega_Y$
as $i\rightarrow +\infty$  in $C^\infty_{\textup{loc}}$ on $Q_{\delta_i^{-1}}$.

We will drop the index $i$ on $\delta_i$, $\phi_i$, $\eta_i(t)$ and $\hat \eta_i(t)$. Linearising the parabolic Monge-Amp\`ere equation yields
 \begin{equation}
\begin{split}
\int^1_0 \hat h_s^{k\bar l} \partial_k\partial_{\bar l}\phi \;ds&=\dot\phi+e^{-t_i}\phi+\Upsilon_i^*v -ne^{-t_i} t+ \log\frac{ \tilde \omega_P^{m+n}}{\hat\eta(t)^{m+n}}.
\end{split}
\end{equation}
where $\hat h_{i,s}(t)=s\eta_i(t)+(1-s)\hat\eta_i(t)$. If we define $\xi(x,t)=e^{2e^{-t_i}t}\phi(x,2t)$ for $t\in [-\delta_i^{-2}/2,0]$, then we have 
\begin{equation}\label{linearMA}
\begin{split}
\xi'-\Delta_{g_P}\xi=2\int^1_0\left(  h_s^{i\bar j}-g_P^{i\bar j}\right) \xi_{i\bar j} \;ds+2F
\end{split}
\end{equation}
where $ h_s(t)=\hat h_s(2t)$ and $F=ne^{-t_i+2e^{-t_i}t}+e^{2e^{-t_i}t}\log \frac{\hat\eta(2t)^{m+n}}{ \tilde\omega_{P}^{m+n}}-\Upsilon_i^*v$. Note that $\Delta=2\Delta_{\partial}$ by \K identity.


Noted that $\xi$ is unbounded in $L^\infty$, and hence the parabolic Schauder estimates do not apply directly on $\xi$. Instead, we apply on $\ddb \xi$. Taking $\ddb$ on both sides of \eqref{linearMA} and using the \K identity, we have 
\begin{equation}\label{linea-MA}
\begin{split}
\Box_{g_P} \ddb \xi=2\ddb\left[ \int^1_0\left( h_s^{i\bar j}-g_P^{i\bar j}\right)\xi_{i\bar j} \;ds\right]+2\ddb F
\end{split}
\end{equation}
where we have used the Hodge Laplacian. Since $e^{-t_i/2}\leq \delta_i$, we have 
\begin{equation}\label{source-term-skretching}
\left\{
\begin{array}{ll}
[\partial^k \ddb F]_{\a,\a/2,Q_{\delta^{-1}}}&\leq C\delta^{k+2+\a}\\
||\ddb F||_{\infty,Q_{\delta^{-1}}}&\leq C\delta^{2}
\end{array}
\right.
\end{equation}
which follows easily from spatial stretching of pull back since $v$ only depends on the base. By Theorem \ref{Sch-esti}, for any $0<\rho<R\leq (2\delta)^{-1}$,
\begin{equation}
\begin{split}
[\nabla^{k+1,g_P} \ddb \xi]_{\a,\a/2,Q_{\e \rho}}&\leq C\Big( [\nabla^{k-1,g_P}\Box_{g_P}\ddb \xi]_{\a,\a/2,Q_{\e \left(\rho+\frac{1}{2}(R-\rho)\right)}}\\
&\quad +(\e (R-\rho))^{-k-\a+1}||\Box\ddb \xi||_{\infty, Q_{\e \left(\rho+\frac{1}{2}(R-\rho)\right)}}\\
&\quad + (\e (R-\rho))^{-k-\a-1}||\ddb \xi||_{\infty, Q_{\e \left(\rho+\frac{1}{2}(R-\rho)\right)}}\Big)\\
&=\mathbf{I}+\mathbf{II}+\mathbf{III}.
\end{split}
\end{equation}

By \eqref{bootready}, 
$$\mathbf{III}\leq C_\e (R-\rho)^{-k-\a-1}.$$

For $\mathbf{I}$ and $\mathbf{II}$, we need to make strong use of the linearized Monge-Amp\`ere equation \eqref{linea-MA}. Since 
\begin{equation}
\begin{split}
&\quad \partial_k \partial_{\bar l} \left[ \int^1_0 (h_s^{i\bar j}-g_P^{i\bar j}) \xi_{i\bar j} ds\right]\\
&=\nabla^{g_P}_k \nabla^{g_P}_{\bar l} \left[ \int^1_0 (h_s^{i\bar j}-g_P^{i\bar j}) \xi_{i\bar j} ds\right]\\
&= \int^1_0  (h_s^{i\bar j}-g_P^{i\bar j})  \nabla^{g_P}_k \nabla^{g_P}_{\bar l} \xi_{i\bar j} +\nabla^{g_P}_k \nabla^{g_P}_{\bar l} h_s^{i\bar j} \xi_{i\bar j} +\nabla^{g_P}_{ k} h_s^{i\bar j}\nabla^{g_P}_{\bar l} \xi_{i\bar j} +\nabla^{g_P}_{\bar l} h_s^{i\bar j}\nabla^{g_P}_{k} \xi_{i\bar j} ds\\
&=\mathbf{A}+\mathbf{B}+\mathbf{C}+\mathbf{D}.
\end{split}
\end{equation}
\\

Since $\mathbf{C}$ and $\mathbf{D}$ are similar, we only consider $\mathbf{C}$. Since $\hat\eta$ is product metric, $\nabla^{g_P}=\nabla^{\hat\eta}$. Hence, $\nabla^{g_P}\hat\eta(t)\equiv 0$ and thus,
\begin{equation}
\begin{split}
\mathbf{C}&=\int^1_0 \nabla_k^{g_P} h_s^{i\bar j} \nabla^{g_P}_{\bar l}\xi_{i\bar j}ds=-\int^1_0 s h_s^{i\bar q} h_s^{p\bar j}  \nabla^{g_P}_{\bar l}\xi_{i\bar j}
\nabla^{g_P}_k \eta_{p\bar q}ds.
\end{split}
\end{equation}

Hence, \eqref{local-eta} and \eqref{bootready} will imply
\begin{equation}
||\mathbf{C}||_{\infty,Q_{\e R}}+||\mathbf{D}||_{\infty,Q_{\e R}}\leq C\delta^2.
\end{equation}

Using \eqref{bootready} with Lemma \ref{int-a},
\begin{equation}
\begin{split}
&\quad [\nabla^{k-1,g_P}\mathbf{C}]_{\a,\a/2,Q_{\e \left(\rho+\frac{1}{2}(R-\rho)\right)}}+[\nabla^{k-1,g_P}\mathbf{D}]_{\a,\a/2,Q_{\e \left(\rho+\frac{1}{2}(R-\rho)\right)}}\\
&\leq C\delta^{k+\a+1}+ C\delta [\nabla^{k,g_P} \ddb\xi]_{\a,\a/2,Q_{\e \left(\rho+\frac{1}{2}(R-\rho)\right)}}\\
&\leq C_\e\delta^{k+\a+1}+ C\e  [\nabla^{k+1,g_P} \ddb\xi]_{\a,\a/2,Q_{\e R}}.
\end{split}
\end{equation}

{We need more information of $\ddb \xi$ for $\mathbf{A}$ and $\mathbf{B}$.
\begin{subclaim}
For any $\e>0$, there is $N$ such that for all $i>N$, 
\begin{equation}
\label{decay}||\ddb \xi||_{\infty,Q_{\e\delta^{-1}}}\leq \e.
\end{equation}
\end{subclaim}
\begin{proof}
[Proof of subclaim]
From the uniform equivalence of metrics \eqref{bootready}, \eqref{almost-KRF} and Theorem \ref{KRF-local-estimates}, $\eta_i(t)$ is bounded locally uniformly on $Q_{\e\delta^{-1}}$ in any $C^k_{\textup{loc}}$ independent $i\rightarrow +\infty$. Hence, $\eta_i(t)$ converges in $C^\infty_{\textup{loc}}$ to a an ancient solution of unnormalized \KR flow $\eta_\infty(t)$ on $\mathbb{C}^m\times Y\times (-\infty,0]=Q_\infty$ which is parallel with respect to $\omega_P$ by \eqref{bootready}. Moreover, using the argument in {\bf CASE 2}, $\eta_\infty(t)\equiv \eta_\infty$ is a Ricci-flat metric and is $d$-cohomologous to $\omega_P$ by \eqref{local-eta-equ}, \eqref{local-eta} and \cite[Proposition 3.11]{HT18}. As pointed out in \cite[page 27]{HT18}, this implies $\eta_\infty$ differs from $\omega_P$ by a linear automorphism of $\mathbb{C}^m$. By pulling back the automorphism, we may assume $\eta_i(t)\rightarrow \omega_P$ as $i\rightarrow \infty$. In other word, $\ddb \xi_i(t)\rightarrow 0$ in $C^{\infty}_{\textup{loc}}(Q_{\infty})$ (after pulling back the automorphism).

From \eqref{bootready}, $||\nabla^{g_P} \ddb \xi||_{\infty,Q_{\delta^{-1}}}\leq C\delta$, and hence 
$$|\ddb \xi( x,t)|_{g_P}\leq |\ddb \xi(\tilde x,t)|_{g_P}+C\e$$
for all $( x,t)\in Q_{\e\delta^{-1}}$ where $\tilde x=(0,y)$ when $x=(z,y)$. It remains to prove that $|\ddb \xi(\tilde x,t)|\leq C\e$ for $t\in [-\e^2\delta^{-2},0]$ and sufficiently large $i$. Recall that 
\begin{equation}\label{flow-C^0}
\begin{split}
\ddb \xi(t)&=e^{e^{-t_i}2t} \ddb \phi_i(2t)\\
&=e^{\hat s} \Upsilon_i^* \ddb\varphi(\hat s).
\end{split}
\end{equation}
where $\hat s=t_i+e^{-t_i}2t\in [t_i-2\e^2 K_i^{-2},t_i]$. Let $i,j$ be the local coordinate on base $\mathbb{C}^m$, $\a,\b$ be the local coordinate on fiber $Y$. Since $ \eta_i(t)$ is positive definite and $\eta_{P,i}(t)$ is a product metric, it suffices to consider $\xi_{i\bar j}$ and $\xi_{\a\bar\b}$ by Cauchy-Schwarz inequality. By \cite[Theorem 1.2]{TWY}, for any compact set $\Omega$ away from the singular set, one have $\omega(t)\rightarrow \omega_\Sigma$ in $C^0_{\omega_0,\Omega}$ as $t\rightarrow +\infty$. This implies $\ddb  \varphi(t)\rightarrow 0$ in $C^0_{loc,\omega_0}$. In local coordinate, 
\begin{equation}
\begin{split}
 \ddb \xi_i(t)&=\sqrt{-1} \exp(e^{-t_i}2t)\varphi_{i\bar j}(\Upsilon_i(x),\hat s) dz^i\wedge d\bar z^{ j}\\
 &\quad +\sqrt{-1} \exp(t_i/2+e^{-t_i}2t) \varphi_{i\bar \b}(\Upsilon_i(x),\hat s) dz^i\wedge d\bar w^{ \b}\\
 &\quad +\sqrt{-1} \exp(t_i/2+e^{-t_i}2t) \varphi_{\a\bar j}(\Upsilon_i(x),\hat s) dw^\a\wedge d\bar z^{ j}\\
&\quad + \sqrt{-1} \exp(t_i+e^{-t_i}2t) \varphi_{\a\bar \b}(\Upsilon_i(x),\hat s) dw^\a\wedge d\bar w^{ \b}.
\end{split}
\end{equation}
where $\hat s=t_i+e^{-t_i}2t$. { Restricted to the base shows that for $i$ sufficiently large, 
\begin{equation}
\begin{split}
|\partial_i\partial_{\bar j} \xi(\tilde x,t)|_{g_{\mathbb{C}^m}}&\leq  \e
\end{split}
\end{equation}
since $\Upsilon_i(\tilde x)= (0,y)\in B\times Y$ and $\hat s\rightarrow +\infty$. For fiber direction, the proof is similar using our choice of $\omega_Y$ and \cite[Theorem 1.2]{TWY} that $e^t \omega(t)|_{X_0}\rightarrow \omega_{SRF,0}$ as $t\rightarrow +\infty$ on the fiber $f^{-1}(0)$. This proves the claim.}
\end{proof}

}

Using \eqref{decay}, \eqref{bootready},
\begin{equation*}
\begin{split}
\xi_{i\bar j}\nabla^{g_P}_k \nabla^{g_P}_{\bar l} h_s^{i\bar j} 
&=h_s^{-1} *h_s^{-1} *\nabla^{2,g_P} \ddb\xi *\ddb \xi\\
&\quad +h^{-1}_s* \nabla^{g_P}  \ddb\xi *\nabla^{g_P}  \ddb\xi *\ddb \xi,
\end{split}
\end{equation*}
Lemma \ref{interpolation} and Lemma \ref{int-a}, we have  
\begin{equation}
\left\{ 
\begin{array}{ll}||\mathbf{B}||_{\infty,Q_{\e \rho}} &\leq C_\e\delta^2;\\\\

[\nabla^{k-1,g_P}\mathbf{B}]_{\a,\a/2,Q_{\e \rho}}&\leq C\e [\nabla^{k,g_P}\ddb \xi]_{\a,\a/2,Q_{\e R}}+C_\e\delta^{k+\a+1}
\end{array}
\right.
\end{equation}

The term $\mathbf{A}$ can be estimated in a similar manner. Hence by combining with \eqref{source-term-skretching}, we get
\begin{equation}
\begin{split}
[\nabla^{k+1,g_P}\ddb\xi_i]_{\a,\a/2,Q_{x,\e\rho}}\leq  C\e [\nabla^{k+1,g_P}\ddb \xi]_{\a,\a/2,Q_{\e R}}+C_\e\delta^{k+\a+1}.
\end{split}
\end{equation}

Therefore by choosing $\e$ small, we can find $1>\e_0>0$ such that for all $0<\rho<R\leq (2\delta)^{-1}$ and $i$ sufficiently large, 
\begin{equation}
[\nabla^{k+1,g_P} \ddb \xi]_{\a,\a/2,Q_{\e \rho}}\leq \e_0 [\nabla^{k+1,g_P}\ddb \xi]_{\a,\a/2,Q_{\e R}}+C\delta^{k+\a+1}.
\end{equation}

By \cite[Lemma 3.4]{HT18}, $[\nabla^{k+1,g_P} \ddb \xi_i]_{\a,\a/2,Q_{\e (2\delta_i)^{-1}}}\leq C\delta_i^{k+\a+1}$ and hence the claim follows from pulling back to $g_i(t)$.
\end{proof}
\begin{rem}
We note that one should be able to establish a bound of any $C^l$ norm around the central fiber using interpolation argument.
\end{rem}

Thanks to {\bf Claim 1}, we have a better regularity on $g_i(t)$ around the central fiber at $t=0$. Therefore, $g_\infty(t)$ is at least $C^{k+\a,1+\a}_{\textup{loc}}$ around it. \\

\noindent{\bf Claim 2:}
\begin{equation}
\label{contradiction-point}
\abs{\nabla^{k,g_{\C^m}}g_{\infty}(0)}_{g_{\C^m}}(\hat{x}_\infty) = 1
\end{equation}
where $g_{\C^m}$ is the flat metric on $\C^m$.
\begin{proof}
The proof is identical to that of \cite[page 29, Claim 2]{HT18} by replacing the family of \K Ricci-flat metrics $\omega_t$ by \KR flow solution $\omega_i(0)$. See also \cite{MR2652468} for the origin of this argument.
\end{proof}

\noindent\textbf{Claim 3:} The $C^{k+\alpha}$ K\"ahler form $\omega_\infty(0)$ on $\C^m$ is parallel to the Euclidean metric. 
\begin{proof}
This follows from the estimates in \cite{TWY} and the argument in \cite{HT18} with some modifications. By \eqref{lma:derivative_rescaled}, we know that if $k>1$, then $$\abs{\nabla^{j,g_{P,i}(t)}g_i(t)}_{{g}_{P,i}(t)} \to 0$$ for all $j<k$ and so the claim is proved. Hence it suffices to consider $k = 1$.

Our goal is to show that $\omega_\infty=\lim_{i\rightarrow }\omega_i(0)$ satisfies 
$$\omega_\infty^m=c\omega_{\mathbb{C}^m}^m$$
for some constant $c$.

As shown at the beginning of proof of {\bf Claim 3}, $\omega_i(0)$ converges to $\omega_\infty$ in $C^\a_{loc,g_P}$ as a tensor on $\mathbb{C}^m\times Y$. By \eqref{metric-equ-1}, we can write $\omega_\infty=\omega_\Sigma|_{z=0}+\ddb \varphi_\infty$ for some $\varphi_\infty\in C^{2,\a}_{\textup{loc}}(\mathbb{C}^m)$. Here, with abuse of notations, we denote $\omega_\Sigma|_{z=0} = (g_{\Sigma})_{i\bar{j}}(0)\,dz^i \wedge d\bar{z}^j$ which is an Euclidean metric. Also denote $\varphi_\infty$ for the pull-back of $\varphi_\infty$ to $\mathbb{C}^m\times Y$. Recall from \eqref{local-MA-KRF-1} and \eqref{rescaling-metric} that
\begin{equation}\label{reMA-local-expan}
\omega_i(0)=(1-e^{-t_i})K_i^2 \Psi_i^* \omega_\Sigma+\delta_i^2 \omega_Y+K_i^2 \Psi_i^*\ddb\varphi(t_i),
\end{equation}
therefore we may assume $K_i^2 \Psi_i^*\ddb\varphi(t_i)\rightarrow \ddb \varphi_\infty$ as $K_ie^{-t_i/2}\rightarrow 0$. Denote $\varphi_i(x)=K_i^2\varphi(\Psi_i(x),t_i)$ so that $\ddb\varphi_i(0)\rightarrow \ddb\varphi_\infty$ in $C^\a_{\textup{loc}}$. Also denote $ \omega_{B,i}=(1-e^{-t_i})K_i^2 \Psi_i^* \omega_\Sigma$ so that $\omega_{B,i}\rightarrow \omega_\Sigma|_{z=0}$.

From \eqref{local-MA}, $\omega_i(0)$ satisfies
\begin{align}
\label{eq:MA-omega-i}
 \omega_i(0)^{m+n}& =\delta_i^{2n}e^{\Psi_i^*(\dot\varphi+\varphi+v)}\omega_P^{m+n}
\end{align}
where $\omega_P=\omega_{\mathbb{C}^m}+\omega_Y$. Here $\dot\varphi$ and $\varphi$ are evaluated at $t=t_i $. Denote by $\underline{\varphi}(z, t) : B \times [0,\infty) \to \R$ the average value of $\varphi(z,y,t) : (B \times Y) \times [0,\infty) \to \R$ over $Y$, i.e.
\[\underline{\varphi}(z,t) =\fint_Y \varphi(z,y,t)\,\omega_{Y}^n\]

Fix a test function $\eta \in C_c^\infty(\C^m)$ and assume $i$ is large enough so that $\text{supp}(\eta) \subset B_{K_i}$. Then, we have

\begin{equation}
\label{eq:volumeform_LHS}
\begin{split}
\delta^{-2n}_i\int_{\C^m \times Y} \eta\omega_i(0)^{m+n}
& = \int_{\C^m \times Y}e^{\Psi_i^*(\dot\varphi+\varphi+v)}\omega_P^{m+n}
\end{split}
\end{equation}

Note that $\omega_{B,i}(t)$ has only base components, so $(\omega_{B,i}(t))^j = 0$ for any $j > m$. so by expanding $\omega_i(t)^{m+n}$ we have
\begin{align*}
 &\quad \int_{\C^m \times Y} \eta e^{\Psi_i^*(\dot\varphi(t_i)+\varphi(t_i)+v)}\omega_P^{m+n}\\
 &=\delta_i^{-2n}\int_{\C^m \times Y} \eta \omega_i(0)^{m+n}\\
 &=\delta_i^{-2n}\int_{\C^m \times Y} \eta \left(\omega_{B,i}+\delta_i^2 \omega_Y+\ddb\varphi_i\right)^{m+n} \\
 &=\delta_i^{-2n}\int_{\C^m \times Y} \eta \left((\omega_{B,i}+\ddb \underline{\varphi_i})+(\delta_i^2 \omega_Y+\ddb(\varphi_i-\underline{\varphi_i}))\right)^{m+n}\\
 &=\delta_i^{-2n}\int_{\C^m \times Y} \eta\sum_{j=0}^{m+n} C^{m+n}_j (\omega_{B,i}+\ddb \underline{\varphi_i})^j \wedge (\delta_i^2 \omega_Y+\ddb(\varphi_i-\underline{\varphi_i}))^{m+n-j}
\end{align*}

Clearly, all the term with $j>m$ vanishes since $\omega_{B,i}+\ddb\underline{\varphi_i}$ are from base only. For those terms with $j<m$, we now claim that they all converge to $0$ as $i\rightarrow \infty$. By expanding $(\delta_i\omega_Y+\ddb (\varphi_i-\underline{\varphi_i}))^{m+n-j}$, it suffices to consider the following integral where $0\leq k< m+n-j$.
\begin{equation}
\label{eq:integral_j<m}
\begin{split}
&\quad \delta_i^{-2n} \int_{\C^m \times Y} \eta (\omega_{B,i}+\ddb \underline{\varphi_i})^j \wedge (\delta_i^2\omega_Y)^k\wedge (\ddb (\varphi_i-\underline{\varphi_i}))^{m+n-j-k}\\
&=\quad \delta_i^{-2n}   \int_{\C^m \times Y} (\varphi_i-\underline{\varphi_i})\ddb \eta\wedge (\omega_{B,i}+\ddb \underline{\varphi_i})^j \\
&\quad \quad\quad\quad\quad\quad\quad \wedge  (\delta_i^2\omega_Y)^k\wedge (\ddb (\varphi_i-\underline{\varphi_i}))^{m+n-j-k-1}.
\end{split}
\end{equation}
 
By \eqref{metric-equ-1}, we know that for any $z\in \Omega$, $\left|\ddb (\varphi_i-\underline{\varphi_i})|_{z\times Y} \right|=O(\delta_i^2)$ and hence $|\varphi_i-\underline{\varphi_i}|=O(\delta_i^2)$ by applying Yau's $C^0$-estimates \cite{Yau1978} on the each fiber $\{z\} \times Y$ to the metric $\frac{1}{\delta_i^2}\omega_i(0)\big|_{\{z\}\times Y}$ which, according to \eqref{metric-equ-1} and \eqref{eq:MA-omega-i}, has uniformly bounded volume form. When restricted on the fiber $\{z\} \times Y$, $\frac{1}{\delta_i^2}\omega_i(0)$ takes the form
\[\omega_Y + \sqrt{-1}\ddbar\big(\delta_i^{-2}(\varphi_i - \underline{\varphi_i})\big)\]
where $\omega_Y$ is a fixed metric, so one can apply Yau's estimate directly.

Moreover, \eqref{metric-equ-1} implies the base components of $\ddb (\varphi_i-\underline{\varphi_i})$ is uniformly bounded, so by Cauchy-Schwarz inequality, the mixed base-fiber components of $\ddb (\varphi_i-\underline{\varphi_i})$ are of $O(\delta_i)$.

As $\eta$ is independent of $Y$, by counting the contribution to the fiber direction, we claim that when $j < m$ the integral \eqref{eq:integral_j<m} converges to $0$ as $i\rightarrow +\infty$. It is because each term in
\[\ddb \eta\wedge (\omega_{B,i}+\ddb \underline{\varphi_i})^j \wedge  (\delta_i^2\omega_Y)^k\]
has $j+1$ many $dz$'s and $d\bar{z}$'s, and $k$ many $dy$'s and $d\bar{y}$'s. By wedging with $\ddb (\varphi_i-\underline{\varphi_i}))^{m+n-j-k-1}$, only terms with $n-k$ many $dy$'s and $d\bar{y}$'s would not be annihilated. To summarize, the integral in \eqref{eq:integral_j<m} is of order:
\[\delta_i^{-2n} O(\delta_i^2) O(1)O(1)^j  O(\delta_i^2)^k \cdot O(\delta_i^2)^{n-k} = O(\delta_i^2).\]
It proves our claim that the integral in \eqref{eq:integral_j<m} converges to $0$ as $i \to +\infty$.

It remains to consider $j=m$, modulo the constants which is 
\begin{equation}
\begin{split}
\delta_i^{-2n} \int_{\C^m \times Y} \eta (\omega_{B,i}+\ddb \underline{\varphi_i})^m \wedge (\delta_i^2\omega_Y)^k\wedge (\ddb (\varphi_i-\underline{\varphi_i}))^{n-k}.
\end{split}
\end{equation}

If $n>k$, then using integration by parts and counting the fiber component contributions in a similar manner as the above would show that they are zero. Hence it remains to consider 
\begin{equation}
\begin{split}
&\quad  \int_{\C^m \times Y} \eta (\omega_{B,i}+\ddb \underline{\varphi_i})^m \wedge \omega_Y^n.
\end{split}
\end{equation}

For any compactly supported smooth function $\eta$ on $\mathbb{C}^m$,
\begin{align*}
& \int_{\C^m \times Y}\eta e^{\Psi_i^*(\dot\varphi(t_i)+\varphi(t_i)+v)}\omega_P^{m+n}\\
& =C^{m+n}_m\left(\int_{\mathbb{C}^m }\eta (\omega_{B,i}+\ddb \underline{\varphi_i})^m \right) \left(\int_Y \omega_Y^n \right) + o(1) 
\end{align*}
as $i \to +\infty$. Observe that we also have 
$$\omega_{B,i}+\ddb\underline{\varphi_i}\rightarrow \omega_{\Sigma}|_{z=0}+\ddb\varphi_\infty=\omega_\infty.$$

On the other hand, by \cite[Lemma 3.1]{TWY}, after performing pull-back, we have $\dot \varphi+\varphi\rightarrow 0$ as $t\rightarrow +\infty$ on $B\times Y$. Hence, by Fubini's Theorem for all $\eta\in C_c^\infty(\C^m)$,
$$\int_{\C^m}\eta \omega_P^{m}=c_0\int_{\mathbb{C}^m }\eta \omega_\infty^m $$
for some constant $c_0>0$. This completes the proof.
\end{proof}

Thanks to \eqref{metric-equ-1}, by regularity of elliptic Monge-Amp\`ere equation, $\omega_\infty$ is smooth and Ricci flat. Then contradictions arise from Theorem \ref{Liouville-flat} and {\bf Claim 2}. This completes the proof of Proposition \ref{KRF-local-regularity}
\end{proof}

The uniform boundedness of Ricci curvature away from the singular fibre is immediate.
\begin{proof}
[Proof of Corollary \ref{cor:RicciBound}]
For two \K metrics $g$ and $h$, we have 
\begin{equation}
\begin{split}
|\Ric_g|_h &\leq |\Ric_g-\Ric_h |_h +|\Ric_h|_h\\
&=\left|\ddb \log \frac{\det h}{\det g}\right|_h +|\Ric_h|_h\\
&\leq |\nabla^{2,h} F|_h +|\Ric_h|_h
\end{split}
\end{equation}
where $F=\log  \frac{\det h}{\det g}$. Note that  
\begin{equation}
\nabla^{2,h} F= g^{-1}*\nabla^{2,h} g+ g^{-1} * g^{-1} * \nabla^h g *\nabla^{h} g
\end{equation}
the Ricci curvature bound of $g(t)$ follows immediately from above inequality and Theorem \ref{KRF-local-regularity} by substituting $g=g(t)$ and $h=g_P(t)$ since $g(t)$ is uniformly equivalent to $g_P(t)$ independent of $t>0$.
\end{proof}

\bibliographystyle{amsplain}

\begin{thebibliography}{10}

\bibitem{Cao85}
Huai-Dong Cao, \emph{Deformation of {K}{\"a}hler metrics to
  {K}{\"a}hler-{E}instein metrics on compact {K}{\"a}hler manifolds}, Invent.
  Math. \textbf{81} (1985), no.~2, 359--372. \MR{799272 (87d:58051)}

\bibitem{Ev}
Lawrence~C. Evans, \emph{Classical solutions of fully nonlinear, convex,
  second-order elliptic equations}, Comm. Pure Appl. Math. \textbf{35} (1982),
  no.~3, 333--363. \MR{649348}

\bibitem{FongYSZ19}
Frederick Tsz-Ho Fong and Yashan Zhang, \emph{Local curvature estimates of
  long-time solutions to the {K}{\"a}hler-{R}icci flow}, preprint,
  arXiv:1903.05939 (2019).

\bibitem{FZ15}
Frederick Tsz-Ho Fong and Zhou Zhang, \emph{The collapsing rate of the
  {K}{\"a}hler-{R}icci flow with regular infinite time singularity}, J. Reine
  Angew. Math. \textbf{703} (2015), 95--113. \MR{3353543}

\bibitem{Gill}
Matthew Gill, \emph{Collapsing of products along the {K}{\"a}hler-{R}icci
  flow}, Trans. Amer. Math. Soc. \textbf{366} (2014), no.~7, 3907--3924.
  \MR{3192623}

\bibitem{GTZ13}
Mark Gross, Valentino Tosatti, and Yuguang Zhang, \emph{Collapsing of abelian
  fibered {C}alabi-{Y}au manifolds}, Duke Math. J. \textbf{162} (2013), no.~3,
  517--551. \MR{3024092}

\bibitem{GTZ19}
Mark Gross, Valentino Tosatti, and Yuguang Zhang, \emph{Geometry of twisted
  {K}{\"a}hler-{E}instein metrics and collapsing}, preprint, arXiv:1911.07315
  (2019).

\bibitem{Hein19}
Hans-Joachim Hein, \emph{A {L}iouville theorem for the complex
  {M}onge-{A}mp{\`e}re equation on product manifolds}, Comm. Pure Appl. Math.
  \textbf{72} (2019), no.~1, 122--135. \MR{3882223}

\bibitem{HT15}
Hans-Joachim Hein and Valentino Tosatti, \emph{Remarks on the collapsing of
  torus fibered {C}alabi-{Y}au manifolds}, Bull. Lond. Math. Soc. \textbf{47}
  (2015), no.~6, 1021--1027. \MR{3431582}

\bibitem{HT18}
Hans-Joachim Hein and Valentino Tosatti, \emph{Higher-order estimates for
  collapsing {C}alabi-{Y}au metrics}, preprint, arXiv:1803.06697 (2018).

\bibitem{Jian18}
Wangjian Jian, \emph{Convergence of scalar curvature of {K}ahler-{R}icci flow
  on manifolds of positive kodaira dimension}, preprint, arXiv:1805.07884
  (2018).

\bibitem{JianShi19}
Wangjian Jian and Yalong Shi, \emph{A "boundedness implies convergence"
  principle and its applications to collapsing estimates in {K}{\"a}hler
  geometry}, preprint, arXiv:1904.11261 (2019).

\bibitem{Kv}
N.~V. Krylov, \emph{Boundedly nonhomogeneous nonlinear elliptic and parabolic
  equations in the plane}, Uspehi Mat. Nauk \textbf{24} (1969), no.~4 (148),
  201--202. \MR{0262675}

\bibitem{LLZ17}
Chao Li, Jiayu Li, and Xi~Zhang, \emph{A mean value formula and a {L}iouville
  theorem for the complex {M}onge-{A}mp{\`e}re equation}, preprint, to appear
  in Int. Math. Res. Not., arXiv:1709.05754 (2017).

\bibitem{Li86}
Peter Li, \emph{Large time behavior of the heat equation on complete manifolds
  with nonnegative {R}icci curvature}, Ann. of Math. (2) \textbf{124} (1986),
  no.~1, 1--21. \MR{847950}

\bibitem{RbSch84}
Dieter Riebesehl and Friedmar Schulz, \emph{A priori estimates and a
  {L}iouville theorem for complex {M}onge-{A}mp{\`e}re equations}, Math. Z.
  \textbf{186} (1984), no.~1, 57--66. \MR{735051}

\bibitem{ShW12}
Morgan Sherman and Ben Weinkove, \emph{Interior derivative estimates for the
  {K}\"{a}hler-{R}icci flow}, Pacific J. Math. \textbf{257} (2012), no.~2,
  491--501. \MR{2972475}

\bibitem{ST07}
Jian Song and Gang Tian, \emph{The {K}{\"a}hler-{R}icci flow on surfaces of
  positive {K}odaira dimension}, Invent. Math. \textbf{170} (2007), no.~3,
  609--653. \MR{2357504 (2008m:32044)}

\bibitem{ST12}
\bysame, \emph{Canonical measures and {K}{\"a}hler-{R}icci flow}, J. Amer.
  Math. Soc. \textbf{25} (2012), no.~2, 303--353. \MR{2869020}

\bibitem{ST16}
\bysame, \emph{Bounding scalar curvature for global solutions of the
  {K}{\"a}hler-{R}icci flow}, Amer. J. Math. \textbf{138} (2016), no.~3,
  683--695. \MR{3506382}

\bibitem{STZ19}
Jian Song, Gang Tian, and Zhenlei Zhang, \emph{Collapsing behavior of
  {R}icci-flat {K}ahler metrics and long time solutions of the
  {K}\"ahler-{R}icci flow}, preprint, arXiv:1904.08345 (2019).

\bibitem{TianZLZ18}
Gang Tian and Zhenlei Zhang, \emph{Relative volume comparison of {R}icci flow
  and its applications}, preprint, arXiv:1802.09506 (2018).

\bibitem{TZ06}
Gang Tian and Zhou Zhang, \emph{On the {K}{\"a}hler-{R}icci flow on projective
  manifolds of general type}, Chinese Ann. Math. Ser. B \textbf{27} (2006),
  no.~2, 179--192. \MR{2243679 (2007c:32029)}

\bibitem{MR2652468}
Valentino Tosatti, \emph{Adiabatic limits of {R}icci-flat {K}\"{a}hler
  metrics}, J. Differential Geom. \textbf{84} (2010), no.~2, 427--453.
  \MR{2652468}

\bibitem{TWY}
Valentino Tosatti, Ben Weinkove, and Xiaokui Yang, \emph{The
  {K}\"{a}hler-{R}icci flow, {R}icci-flat metrics and collapsing limits}, Amer.
  J. Math. \textbf{140} (2018), no.~3, 653--698. \MR{3805016}

\bibitem{TZ}
Valentino Tosatti and Yuguang Zhang, \emph{Infinite-time singularities of the
  {K}{\"a}hler-{R}icci flow}, Geom. Topol. \textbf{19} (2015), no.~5,
  2925--2948. \MR{3416117}

\bibitem{Tsu}
Hajime Tsuji, \emph{Existence and degeneration of {K}{\"a}hler-{E}instein
  metrics on minimal algebraic varieties of general type}, Math. Ann.
  \textbf{281} (1988), no.~1, 123--133. \MR{944606 (89e:53075)}

\bibitem{ZhangYSI}
Yashan Zhang, \emph{Infinite-time singularity type of the {K}{\"a}hler-{R}icci
  flow}, to appear in J. Geom. Anal., arXiv:1706.07743 (2017).

\bibitem{ZhangYSII}
\bysame, \emph{Infinite-time singularity type of the {K}{\"a}hler-{R}icci flow
  {II}}, to appear in Math. Res. Let., arXiv:1809.01305 (2018).

\bibitem{ZhangYS19}
Yashan Zhang, \emph{Collapsing limits of the {K}\"{a}hler-{R}icci flow and the
  continuity method}, Math. Ann. \textbf{374} (2019), no.~1-2, 331--360.
  \MR{3961313}

\bibitem{ZZhang14}
Zhou Zhang, \emph{Ricci lower bound for {K}\"{a}hler-{R}icci flow}, Commun.
  Contemp. Math. \textbf{16} (2014), no.~2, 1350053, 11. \MR{3195156}

\end{thebibliography}

\end{document}